\newtheorem{theorem}{Theorem}[section]
\newtheorem{conjecture}[theorem]{Conjecture}
\newtheorem{problem}[theorem]{Problem}
\newtheorem{lemma}[theorem]{Lemma}
\newtheorem{remark}[theorem]{Remark}
\newcommand{\la}{\lambda}
\newcommand{\Fc}{{\cal F}_{\rm cub}}
\newcommand{\Fq}{{\cal F}_{\rm qua}}
\newcommand{\de}{\Delta}
\newcommand{\ga}{\Gamma}
\newcommand{\x}{{\bf x}}
\newcommand{\Z}{\mathbb{Z}}
\newcommand{\Rbb}{\mathbb{R}}
\newcommand{\bG}{\boldsymbol{G}}
\newcommand{\bde}{\boldsymbol{\Delta}}
\newcommand{\bga}{\boldsymbol{\Gamma}}
\tikzstyle{vertex}=[circle, draw, inner sep=0pt, minimum size=4pt]
\newcommand{\vertex}{\node[vertex]}
\begin{document}

\title{Gap sets for the spectra of regular graphs with minimum spectral gap}
\author{ Maryam Abdi$^{\,\rm a}$\qquad Ebrahim Ghorbani$^{\,\rm a,b}$
\\[.3cm]
	{\sl $^{\rm a}$Department of Mathematics, K. N. Toosi University of Technology,}\\
{\sl P. O. Box 16765-3381, Tehran, Iran}\\
{\sl $^{\rm b}$Department of Mathematics, University of Hamburg, }\\
{\sl Bundesstra\ss e 55 (Geomatikum), 20146 Hamburg, Germany }}
\maketitle
\footnotetext{{\em E-mail Addresses}: {\tt m.abdi@email.kntu.ac.ir} (M. Abdi), {\tt e\_ghorbani@ipm.ir} (E. Ghorbani) }

\begin{abstract}
Following recent work by Koll\'{a}r and Sarnak, we study gaps in the spectra of large connected  cubic and quartic graphs with minimum spectral gap. We focus on two sequences of graphs, denoted $\de_n$ and $\ga_n$ which are more `symmetric' 
 compared to the other graphs in these two families, respectively. We prove that
 $(1,\sqrt{5}]$ is a gap interval for $\de_n$, and
$[(-1+\sqrt{17})/2,3]$ is a gap interval for $\ga_n$.
	We conjecture that these two are indeed maximal gap intervals.
	As a by-product, we show that the eigenvalues of $\de_n$ lying in the interval $[-3,-\sqrt{5}]$ (in particular, its minimum eigenvalue) converge to $(1-\sqrt{33})/2$  and the eigenvalues of $\ga_n$ lying in the interval $[-4,-(1+\sqrt{17})/2]$  (and in particular, its minimum eigenvalue) converge to $1-\sqrt{13}$ as $n$ tends to infinity.
	The proofs of the above results heavily depend on the following property which can be of independent interest:
 with few exceptions, all the eigenvalues of connected cubic and quartic graphs with minimum spectral gap  are simple.
	
\noindent {\bf Keywords:}  Spectral gap, Gap set in spectrum, Cubic graph, Quartic graph \\[.1cm]
\noindent {\bf AMS Mathematics Subject Classification (2010):}   05C50
\end{abstract}

\section{Introduction}
Let $G$ be a simple graph.
 The eigenvalues of  $G$ are  defined as the eigenvalues of its adjacency matrix.
The {\em spectrum} of $G$ is the list of its eigenvalues. The difference between the two largest eigenvalues of $G$ is called the {\em spectral gap} of $G$.
 Aldous and Fill (see \cite[p.~217]{aldous2002reversible}) posed a conjecture on the maximum relaxation time of the random walk in regular graphs.  This conjecture can be rephrased in terms of the spectral gap as follows: the spectral gap of a connected  $r$-regular graph on $n$ vertices is at least
$(1+o(1))\frac{2r\pi^2}{3n^2}$. Brand, Guiduli, and Imrich \cite{Imrich} (confirming a conjecture by L. Babai (see \cite{Guiduli})) determined the structure of connected cubic (i.e. $3$-regular) graphs with minimum spectral gap. For every even $n\ge10$, such a graph is proved to be unique. We denote this family of graphs by $\Fc$ (see Section~\ref{sec:pre}
for  the precise description).
 Abdi, Ghorbani and Imrich \cite{AbGhIm} showed that the spectral gap of the graphs of $\Fc$ is $(1+o(1))\frac{2\pi^2}{n^2}$, confirming the Aldous--Fill conjecture for $r=3$. Continuing this line of investigation,
Abdi and Ghorbani \cite{AbGh} gave a `near' complete characterization of the graphs with the minimum spectral gap among the connected quartic  (i.e. $4$-regular) graphs of a fixed order. They showed that such graphs belong to a specific family, here denoted by $\Fq^*$ (see Section~\ref{sec:pre}
for the precise description). Based on this result, they established  the Aldous--Fill conjecture for $r=4$. They further conjectured that the graphs in a subfamily $\Fq$ of $\Fq^*$  give the unique {\em minimal} graph for every order $n$.

Koll\'{a}r and Sarnak \cite{Sarnak} studied gaps in the spectra of large finite cubic graphs. 
An interval in the real line that contains no eigenvalues of $G$ is called  a {\em gap interval} for $G$.
It is known that the gap interval $(2\sqrt2, 3)$ achieved in cubic Ramanujan graphs  is {\em maximal} meaning that for any $\epsilon>0$, the interval $(2\sqrt2-\epsilon, 3)$ contains some eigenvalues of some cubic Ramanujan graphs. Also the gap interval $[-3,-2)$ achieved in cubic line graphs is maximal.  Koll\'{a}r and Sarnak gave constraints on spectra in $[-3,3]$ which are maximally gapped and constructed examples which achieve these bounds. Among other results, they also showed that every point in $[-3, 3)$ can be gapped (meaning that every point is contained in an open interval which is a gap set) by planar cubic graphs.  They also posed the similar spectral gap questions for general regular graphs.
\begin{problem}[Koll\'{a}r and Sarnak \cite{Sarnak}]\rm
Investigate the spectral gap questions more generally for $r$-regular graphs ($r > 3$).
\end{problem}
 This is the main motivation in this work. Our goal is to determine what gaps can be achieved by large connected cubic and quartic  graphs with minimum spectral gap. More precisely, our focus will be on the graphs in $\Fc$ with orders $\equiv2\pmod4$ and the graphs in $\Fq$ with orders $\equiv1\pmod5$ which are denoted by $\de_n$ and $\ga_n$, respectively. These graphs are more `symmetric' than the other graphs of their respective families. We prove that:
\begin{itemize}
	\item $(1,\sqrt{5}]$ is a gap interval for the graphs $\de_n$;
	\item  $[(-1+\sqrt{17})/2,3]$ is a gap interval for the graphs $\ga_n$.
\end{itemize}
We conjecture that the above two intervals are indeed maximal gap intervals.
As a by-product, we obtain the following results. Here $\rho(G)$ denotes the smallest eigenvalue of a graph $G$.
\begin{itemize}
	\item The eigenvalues of $\de_n$ lying in the interval $\left[-3,-\sqrt{5}\right]$ converge to $(1-\sqrt{33})/2$ as $n$ tends to infinity. In particular, 	 $\underset{n\to \infty}{\lim}\rho(\de_n)=(1-\sqrt{33})/2$.
	\item The eigenvalues of $\ga_n$ lying in the interval $[-4,-(1+\sqrt{17})/2]$ converge to $1-\sqrt{13}$ as $n$ tends to infinity. In particular, $\underset{n\to \infty}{\lim}\rho(\ga_n)=1-\sqrt{13}$.
\end{itemize}
The proofs of the above results heavily depend on the following properties on the simplicity of the eigenvalues of the graphs in $\Fc$ and $\Fq^*$ which can be of independent interest.
\begin{itemize}
	\item  Let $G$ be a graph in $\Fc$ with order  $n\ge10$.
	If  $n\equiv2\pmod4$, then all the eigenvalues of $G$ except $-1$ and $0$ are simple. If $n\equiv 0\pmod4$, then the only non-simple eigenvalue of $G$ is $-1$.
	\item Non-simple  eigenvalues of the graphs in $\Fq^*$ of order $n\geq 11$ belong to
	$$\left\{-2,0, \pm 1, -1\pm\sqrt{2}, (-1\pm\sqrt{5})/2\right\}.$$
\end{itemize}

In passing, we remark that several results on gap sets for specific families of graphs can be found in the literature. Jacobs {\em et al.} \cite{jtt0} showed that threshold graphs (that is graphs with no induced $P_4$, $C_4$ nor $2K_2$) have no eigenvalues in $(-1, 0)$.
This result was extended by Ghorbani \cite{ghCo} who showed that a graph $G$ is a cograph  (i.e. a $P_4$-free graph) if and only if no induced subgraph of $G$ has an eigenvalue in the interval $(-1,0)$. Improving the aforementioned result of Jacobs {\em et al.}, Ghorbani \cite{ghTh} proved that besides $0,-1$,  threshold  graphs  have no eigenvalues in the maximal gap interval $\left[(-1-\sqrt{2})/2,\,(-1+\sqrt{2})/2\right]$. This was first conjectured by Aguilar  {\em et al.} \cite{alps}.  In \cite{ghCo}, it is also proved that bipartite $P_5$-free graphs  have no eigenvalues in the intervals $(-1/2,0)$ and $(0, 1 / 2)$.
The result of \cite{ghTh} is extended  to the case that if the `generating sequence' of a threshold graph is given by Andeli\'c {\em et al.} \cite{Andelic}  and to the spectrum  of distance matrix by Alazemi {\em et al.}  \cite{Alazemi}.

The rest of the paper is organized as follows. In Section~\ref{sec:pre}, we present the precise description of the graphs in $\Fc$ and $\Fq^*$, and recall some basic facts about graph eigenvalues. In Section~\ref{sec:Floque-Bloch}, we give a brief overview of Floque--Bloch transform where we apply it to the two infinite graphs that correspond naturally to the sequences $\de_n$ and $\ga_n$ and obtain their bands of continuous spectra. In Section~\ref{sec:simplicity}, we prove the results on simplicity of the eigenvalues of the graphs in these two families.
  This will be used in Section~\ref{sec:gap}, to establish the results on the gap intervals and other properties of the spectra of the graphs $\de_n$ and $\ga_n$.

\section{Preliminaries}\label{sec:pre}

In this section, we give the precise description of the families $\Fc$ and $\Fq^*$ and
recall some basic facts concerning graph eigenvalues.

\subsection{Cubic and quartic graphs with minimum spectral gap}
 L. Babai (see \cite{Guiduli}) posed a conjecture on the structure of the connected cubic graph with minimum  spectral gap.
Brand, Guiduli, and Imrich \cite{Imrich} confirmed the conjecture by proving that
for any even $n\ge10$, the $n$-vertex graph given in Figure~\ref{fig:mincubic}
 is the unique graph with minimum spectral gap among connected cubic graphs of order $n$. Note that cubic graphs always have even order.
  We denote the family consisting of these `minimal' cubic graphs by $\Fc$.

 \begin{figure}[H]
	\centering
	\begin{tikzpicture}[scale=0.9]
		\vertex[fill] (1) at (0,-.5) [] {};
		\vertex[fill] (2) at (0,.5) [] {};
		\vertex[fill] (3) at (1,-.5) [] {};
		\vertex[fill] (4) at (1,.5) [] {};
		\vertex[fill] (5) at (1.5,0) [] {};
		\vertex[fill] (6) at (2,0) [] {};
		\vertex[fill] (7) at (2.5,.5) [] {};
		\vertex[fill] (8) at (2.5,-.5) [] {};
		\vertex[fill] (9) at (3,0) [] {};
		\vertex[fill] (10) at (3.5,0) [] {};
		\vertex[fill] (11) at (4,.5) [] {};
		\vertex[fill] (12) at (4,-.5) [] {};
		\vertex[fill] (13) at (4.5,0) [] {};
		\vertex[fill] (22) at (6.4,0) [] {};
		\vertex[fill] (23) at (6.9,.5) [] {};
		\vertex[fill] (24) at (6.9,-.5) [] {};
		\vertex[fill] (25) at (7.4,0) [] {};
		\vertex[fill] (26) at (7.9,0) [] {};
		\vertex[fill] (27) at (8.4,.5) [] {};
		\vertex[fill] (28) at (8.4,-.5) [] {};
		\vertex[fill] (29) at (8.9,0) [] {};
		\vertex[fill] (34) at (10.9,-.5) [] {};
		\vertex[fill] (33) at (10.9,.5) [] {};
		\vertex[fill] (32) at (9.9,-.5) [] {};
		\vertex[fill] (31) at (9.9,.5) [] {};
		\vertex[fill] (30) at (9.4,0) [] {};
		\tikzstyle{vertex}=[circle, draw, inner sep=0pt, minimum size=1pt]
		\vertex[fill] (15) at (5.25,0) [] {};
		\vertex[fill] (17) at (5.45,0) [] {};
		\vertex[fill] (19) at (5.65,0) [] {};
		\tikzstyle{vertex}=[circle, draw, inner sep=0pt, minimum size=0pt]
		\vertex[] (14) at (4.8,0) [] {};
		\vertex[] (21) at (6.1,0) [] {};
		\path
		(1) edge (2)
		(1) edge (3)
		(1) edge (4)
		(2) edge (3)
		(2) edge (4)
		(3) edge (5)
		(4) edge (5)
		(5) edge (6)
		(6) edge (7)
		(6) edge (8)
		(7) edge (8)
		(7) edge (9)
		(8) edge (9)
		(9) edge (10)
		(10) edge (11)
		(10) edge (12)
		(11) edge (12)
		(11) edge (13)
		(12) edge (13)
		(13) edge (14)
		(21) edge (22)
		(22) edge (23)
		(22) edge (24)
		(23) edge (24)
		(23) edge (25)
		(24) edge (25)
		(25) edge (26)
		(26) edge (27)
		(26) edge (28)
		(27) edge (28)
		(27) edge (29)
		(28) edge (29)
		(29) edge (30)
		(30) edge (31)
		(30) edge (32)
		(31) edge (33)
		(31) edge (34)
		(32) edge (33)
		(32) edge (34)
		(33) edge (34);
	\end{tikzpicture}
	\vspace{.2cm} \\
	\begin{tikzpicture}[scale=0.9]
		\vertex[fill] (1) at (0,-.5) [] {};
		\vertex[fill] (2) at (0,.5) [] {};
		\vertex[fill] (3) at (1,-.5) [] {};
		\vertex[fill] (4) at (1,.5) [] {};
		\vertex[fill] (5) at (1.5,0) [] {};
		\vertex[fill] (6) at (2,0) [] {};
		\vertex[fill] (7) at (2.5,.5) [] {};
		\vertex[fill] (8) at (2.5,-.5) [] {};
		\vertex[fill] (9) at (3,0) [] {};
		\vertex[fill] (10) at (3.5,0) [] {};
		\vertex[fill] (11) at (4,.5) [] {};
		\vertex[fill] (12) at (4,-.5) [] {};
		\vertex[fill] (13) at (4.5,0) [] {};
		\vertex[fill] (22) at (6.4,0) [] {};
		\vertex[fill] (23) at (6.9,.5) [] {};
		\vertex[fill] (24) at (6.9,-.5) [] {};
		\vertex[fill] (25) at (7.4,0) [] {};
		\vertex[fill] (26) at (7.9,0) [] {};
		\vertex[fill] (27) at (8.4,.5) [] {};
		\vertex[fill] (28) at (8.4,-.5) [] {};
		\vertex[fill] (29) at (8.9,0) [] {};
		\vertex[fill] (34) at (10.9,-.5) [] {};
		\vertex[fill] (33) at (10.9,.5) [] {};
		\vertex[fill] (32) at (9.9,-.5) [] {};
		\vertex[fill] (31) at (9.9,.5) [] {};
		\vertex[fill] (30) at (9.4,0) [] {};
		\vertex[fill] (35) at (11.9,-.5) [] {};
		\vertex[fill] (36) at (11.9,.5) [] {};
		\tikzstyle{vertex}=[circle, draw, inner sep=0pt, minimum size=1pt]
		\vertex[fill] (15) at (5.25,0) [] {};
		\vertex[fill] (17) at (5.45,0) [] {};
		\vertex[fill] (19) at (5.65,0) [] {};
		\tikzstyle{vertex}=[circle, draw, inner sep=0pt, minimum size=0pt]
		\vertex[] (14) at (4.8,0) [] {};
		\vertex[] (21) at (6.1,0) [] {};
		\path
		(1) edge (2)
		(1) edge (3)
		(1) edge (4)
		(2) edge (3)
		(2) edge (4)
		(3) edge (5)
		(4) edge (5)
		(5) edge (6)
		(6) edge (7)
		(6) edge (8)
		(7) edge (8)
		(7) edge (9)
		(8) edge (9)
		(9) edge (10)
		(10) edge (11)
		(10) edge (12)
		(11) edge (12)
		(11) edge (13)
		(12) edge (13)
		(13) edge (14)
		(21) edge (22)
		(22) edge (23)
		(22) edge (24)
		(23) edge (24)
		(23) edge (25)
		(24) edge (25)
		(25) edge (26)
		(26) edge (27)
		(26) edge (28)
		(27) edge (28)
		(27) edge (29)
		(28) edge (29)
		(29) edge (30)
		(30) edge (31)
		(30) edge (32)
		(31) edge (32)
		(31) edge (33)
		(32) edge (34)
		(33) edge (35)
		(34) edge (36)
		(33) edge (36)
		(34) edge (35)
		(35) edge (36);
	\end{tikzpicture}
	\caption{The graphs of $\Fc$ of order $n\equiv2\pmod4$ and $n\equiv0\pmod4$, resp.}
	\label{fig:mincubic}
\end{figure}

For every $n\equiv2\pmod4$, we denote the $n$-vertex graph of $\Fc$ by $\de_n$. This is the upper graph of Figure~\ref{fig:mincubic}.

Turning to quartic graphs,
we define the family $\Fq^*$ as follows. Similarly to the graphs in $\Fc$, the graphs in  $\Fq^*$ have  a {\em path-like structure} as illustrated in Figure~\ref{fig:path-like}. Further, their building blocks are those given in Figure~\ref{fig:thm:quartic}; any middle block of $G$ is $M$, and  the left end block of $G$ is one of  $D_1,\ldots,D_5$.  The right end block is the mirror image of one of these five blocks.

\begin{figure}[h!]
	\centering
	\begin{tikzpicture}[scale=.9]
		\tikzstyle{vertex}=[draw, inner sep=0pt, minimum size=0pt]
		\vertex[fill] (1) at (0,0) [label=left:\tiny{}] {};
		\vertex[fill] (2) at (.5,.5) [] {};
		\vertex[fill] (3) at (.5,-.5) [] {};
		\vertex[fill] (4) at (1.5,.5) [] {};
		\vertex[fill] (5) at (1.5,-.5) [] {};
		\vertex[fill] (6) at (2,0) [] {};
		\vertex[fill] (7) at (2.5,.5) [] {};
		\vertex[fill] (8) at (2.5,-.5) [] {};
		\vertex[fill] (9) at (3.5,.5) [] {};
		\vertex[fill] (10) at (3.5,-.5) [] {};
		\vertex[fill] (11) at (4,0) [] {};
		\vertex[fill] (12) at (4.5,.5) [] {};
		\vertex[fill] (13) at (4.5,-.5) [] {};
		\vertex[fill] (14) at (5.5,.5) [] {};
		\vertex[fill] (15) at (5.5,-.5) [] {};
		\vertex[fill] (16) at (6,0) [] {};
		\vertex[fill] (20) at (7.6,0) [] {};
		\vertex[fill] (21) at (8.1,.5) [] {};
		\vertex[fill] (22) at (8.1,-.5) [] {};
		\vertex[fill] (23) at (9.1,.5) [] {};
		\vertex[fill] (24) at (9.1,-.5) [] {};
		\vertex[fill] (25) at (9.6,0) [] {};
		\tikzstyle{vertex}=[circle, draw, inner sep=0pt, minimum size=1pt]
		\vertex[fill] (17) at (6.6,0) [] {};
		\vertex[fill] (18) at (6.8,0) [] {};
		\vertex[fill] (19) at (7,0) [] {};
		\tikzstyle{vertex}=[circle, draw, inner sep=0pt, minimum size=0pt]
		\vertex (s) at (6.2,.2) [label=right:$$] {};
		\vertex (ss) at (6.2,-.2) [label=right:$$] {};
		\vertex (sss) at (7.4,.2) [label=right:$$] {};
		\vertex (ssss) at (7.4,-.2) [label=right:$$] {};
		\path[draw,thick]
		(1) edge (2)
		(1) edge (3)
		(3) edge (5)
		(2) edge (4)
		(4) edge (6)
		(5) edge (6)
		(6) edge (7)
		(6) edge (8)
		(7) edge (9)
		(8) edge (10)
		(10) edge (11)
		(9) edge (11)
		(11) edge (12)
		(11) edge (13)
		(12) edge (14)
		(13) edge (15)
		(14) edge (16)
		(15) edge (16)
		(16) edge (ss)
		(16) edge (s)
		(sss) edge (20)
		(ssss) edge (20)
		(20) edge (21)
		(20) edge (22)
		(21) edge (23)
		(22) edge (24)
		(24) edge (25)
		(23) edge (25);
	\end{tikzpicture}
	\caption{The path-like structure}\label{fig:path-like}
\end{figure}
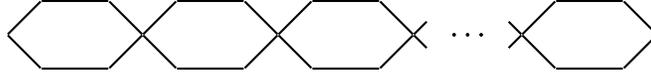

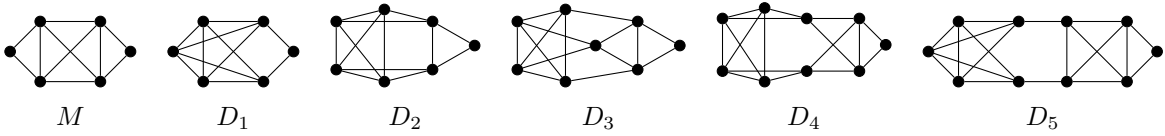
\begin{figure}[h!]
	\captionsetup[subfigure]{labelformat=empty}
	\centering
	\subfloat[$M$]{\begin{tikzpicture}[scale=0.8]
			\vertex[fill] (r) at (0,0) [] {};
			\vertex[fill] (r1) at (.5,.5) [] {};
			\vertex[fill] (r2) at (.5,-.5) [] {};
			\vertex[fill] (r3) at (1.5,.5) [] {};
			\vertex[fill] (r4) at (1.5,-.5) [] {};
			\vertex[fill] (r5) at (2,0) [][] {};
			\path
			(r) edge (r1)
			(r) edge (r2)
			(r1) edge (r2)
			(r1) edge (r3)
			(r1) edge (r4)
			(r2) edge (r3)
			(r2) edge (r4)
			(r3) edge (r4)
			(r5) edge (r4)
			(r3) edge (r5)
			;
	\end{tikzpicture}}
	\quad
	\subfloat[$D_1$]{\begin{tikzpicture}[scale=0.8]
			\vertex[fill] (r) at (0,0) [] {};
			\vertex[fill] (r1) at (.5,.5) [] {};
			\vertex[fill] (r2) at (.5,-.5) [] {};
			\vertex[fill] (r3) at (1.5,.5) [] {};
			\vertex[fill] (r4) at (1.5,-.5) [] {};
			\vertex[fill] (r5) at (2,0) [] {};
			\path
			(r) edge (r1)
			(r) edge (r2)
			(r) edge (r3)
			(r) edge (r4)
			(r1) edge (r2)
			(r1) edge (r3)
			(r1) edge (r4)
			(r2) edge (r3)
			(r2) edge (r4)
			(r5) edge (r4)
			(r5) edge (r3)
			;
	\end{tikzpicture}}
	~~
	\subfloat[$D_2$]{\begin{tikzpicture}[scale=0.8]
			\vertex[fill] (r1) at (0,.8) [] {};
			\vertex[fill] (r2) at (.8,1) [] {};
			\vertex[fill] (r3) at (0,0) [] {};
			\vertex[fill] (r4) at (.8,-.2) [] {};
			\vertex[fill] (r5) at (1.6,.8) [] {};
			\vertex[fill] (r6) at (1.6,0) [] {};
			\vertex[fill] (r7) at (2.3,.4) [] {};
			\path
			(r5) edge (r2)
			(r1) edge (r2)
			(r1) edge (r5)
			(r1) edge (r3)
			(r1) edge (r4)
			(r2) edge (r3)
			(r2) edge (r4)
			(r3) edge (r6)
			(r4) edge (r3)
			(r4) edge (r6)
			(r5) edge (r6)
			(r7) edge (r6)
			(r5) edge (r7)
			;
	\end{tikzpicture}}
	~~
	\subfloat[$D_3$]{\begin{tikzpicture}[scale=0.8]
			\vertex[fill] (r1) at (0,0) [] {};
			\vertex[fill] (r2) at (0,.8) [] {};
			\vertex[fill] (r3) at (.8,1) [] {};
			\vertex[fill] (r4) at (.8,-.2) [] {};
			\vertex[fill] (r5) at (1.3,.4) [] {};
			\vertex[fill] (r6) at (2,.8) [] {};
			\vertex[fill] (r7) at (2,0) [] {};
			\vertex[fill] (r8) at (2.7,.4) [] {};
			\path
			(r5) edge (r2)
			(r1) edge (r2)
			(r1) edge (r5)
			(r1) edge (r3)
			(r1) edge (r4)
			(r2) edge (r3)
			(r2) edge (r4)
			(r3) edge (r6)
			(r4) edge (r3)
			(r4) edge (r7)
			(r5) edge (r7)
			(r5) edge (r6)
			(r6) edge (r7)
			(r7) edge (r8)
			(r6) edge (r8)  ;
	\end{tikzpicture}}
	~~
	\subfloat[$D_4$]{\begin{tikzpicture}[scale=0.7]
			\vertex[fill] (r1) at (0,1) [] {};
			\vertex[fill] (r2) at (.8,1.2) [] {};
			\vertex[fill] (r3) at (0,0) [] {};
			\vertex[fill] (r4) at (.8,-.2) [] {};
			\vertex[fill] (r5) at (1.6,1) [] {};
			\vertex[fill] (r6) at (1.6,0) [] {};
			\vertex[fill] (r7) at (2.6,1) [] {};
			\vertex[fill] (r8) at (2.6,0) [] {};
			\vertex[fill] (r9) at (3.1,.5) [] {};
			\path
			(r5) edge (r2)
			(r1) edge (r2)
			(r1) edge (r5)
			(r1) edge (r3)
			(r1) edge (r4)
			(r2) edge (r3)
			(r2) edge (r4)
			(r3) edge (r6)
			(r4) edge (r3)
			(r4) edge (r6)
			(r5) edge (r8)
			(r5) edge (r7)
			(r6) edge (r8)
			(r6) edge (r7)
			(r7) edge (r8)
			(r9) edge (r7)
			(r9) edge (r8)
			;
	\end{tikzpicture}}
	~~
	\subfloat[$D_5$]{\begin{tikzpicture}[scale=0.8]
			\vertex[fill] (r1) at (0,0) [] {};
			\vertex[fill] (r3) at (.5,.5) [] {};
			\vertex[fill] (r2) at (.5,-.5) [] {};
			\vertex[fill] (r5) at (1.5,.5) [] {};
			\vertex[fill] (r4) at (1.5,-.5) [] {};
			\vertex[fill] (r6) at (2.3,-.5) [] {};
			\vertex[fill] (r7) at (2.3,.5) [] {};
			\vertex[fill] (r8) at (3.3,-.5) []{};
			\vertex[fill] (r9) at (3.3,.5) [] {};
			\vertex[fill] (r10) at (3.8,0) [] {};
			\path
			(r1) edge (r2)
			(r1) edge (r3)
			(r1) edge (r4)
			(r1) edge (r5)
			(r2) edge (r3)
			(r2) edge (r4)
			(r2) edge (r5)
			(r3) edge (r4)
			(r3) edge (r5)
			(r6) edge (r4)
			(r5) edge (r7)
			(r6) edge (r7)
			(r6) edge (r8)
			(r6) edge (r9)
			(r8) edge (r7)
			(r7) edge (r9)
			(r8) edge (r9)
			(r8) edge (r10)
			(r9) edge (r10)
			;
	\end{tikzpicture}}
	\caption{The building blocks of the graphs  in $\Fq^*$}\label{fig:thm:quartic}
\end{figure} 

\begin{theorem}[Abdi and Ghorbani \cite{AbGh}] \label{thm:quartic}
	For any $n\ge11$, a graph with minimum spectral gap among the connected quartic graphs of order $n$ belongs to $\Fq^*$.
\end{theorem}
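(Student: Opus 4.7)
The plan is to prove the theorem in three stages: first, establish that any extremal graph has a ``path-like'' cut structure; second, identify its middle blocks; and third, classify its end blocks. Since connected quartic graphs satisfy $\lambda_1 = 4$, minimizing the spectral gap is equivalent to maximizing $\lambda_2$, and the Aldous--Fill prediction of a minimum gap of order $1/n^2$ (matching that of a long path) already suggests that the extremal graph should behave like a chain of bounded-size blocks glued along small edge-cuts.

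For Stage~1 I would first exhibit a concrete candidate $H_n\in\Fq^*$ together with an explicit trial Rayleigh quotient witnessing $\lambda_2(H_n)\ge 4-O(1/n^2)$. Then any graph $G$ that is not of this path-like shape must have better expansion in the sense of Cheeger: every bipartition of $V(G)$ either has edge-cut larger than $2$, or separates off only boundedly many vertices. Using a Cheeger-type lower bound on $4-\lambda_2(G)$ one argues that such a $G$ forces a gap of order $1/n$, beating $H_n$ for large $n$ and contradicting extremality. This reduces the problem to graphs of the path-like shape in Figure~\ref{fig:path-like}, with middle pieces linked by $2$-edge-cuts.

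For Stages~2 and~3, assume $G$ has the path-like structure and suppose some middle block $B$ is not equal to $M$. Replacing $B$ by $M$ produces a new graph $G'$; using a trial vector on $G'$ built by modifying the second eigenvector of $G$ on the replaced portion (and tuning it to stay orthogonal to $\mathbf{1}$), I would verify $\lambda_2(G')>\lambda_2(G)$, contradicting extremality. The local fact driving this is that among $4$-regular building blocks compatible with two $2$-edge-cuts, $M$ is the unique block of minimum size that maximises the relevant local spectral quantity near $4$. A similar comparison but with only one $2$-edge-cut classifies the end blocks, and a finite enumeration of all quartic building blocks on at most a handful of vertices then leaves precisely $D_1,\dots,D_5$ and their mirror images.

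The main obstacle, I expect, is Stage~1: rigorously forcing the global path-like structure without any a priori topological assumption on $G$. This is delicate because quartic graphs admit a much richer variety of local $2$-edge-cut configurations than cubic graphs, where the analogous theorem of Brand--Guiduli--Imrich~\cite{Imrich} is already nontrivial; ruling out branching or cycle-like global shapes requires careful quantitative Rayleigh-quotient comparisons rather than soft topological arguments. The end-block enumeration in Stage~3 is also laborious, since several candidate blocks have spectral behaviour very close to the chosen $D_i$ and must be separated by essentially exact eigenvalue computations rather than asymptotic estimates.
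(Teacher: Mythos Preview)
This theorem is not proved in the present paper: it is quoted from \cite{AbGh} as a known result in the preliminaries (Section~\ref{sec:pre}), with no proof or sketch given here. So there is no ``paper's own proof'' to compare your proposal against.

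As a standalone outline your three-stage plan is a reasonable caricature of how such structural extremal results are typically obtained, and indeed the cubic case of Brand--Guiduli--Imrich that you cite follows a broadly similar pattern. But what you have written is a strategy, not a proof: Stage~1 in particular is where all the content lies, and your sketch does not supply the mechanism. A Cheeger inequality alone cannot force the global structure down to a path of $2$-edge-cuts, because Cheeger only controls the \emph{minimum} edge-expansion ratio, not the full cut profile; graphs with a single $2$-edge-cut and otherwise good expansion would pass your Cheeger test but are not path-like. The actual argument in \cite{AbGh} (as in \cite{Imrich} for the cubic case) requires a much finer analysis of the second eigenvector on candidate graphs, together with block-replacement comparisons that are made rigorous via explicit Rayleigh-quotient inequalities rather than asymptotic $O(1/n)$ versus $O(1/n^2)$ heuristics. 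Your Stages~2--3 are closer to what is done, but the ``local spectral quantity near $4$'' you allude to needs to be defined and its extremality over all admissible blocks proved, which is again the substantive work. If you want to reconstruct the proof you should consult \cite{AbGh} directly.
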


For given $n$, the family $\Fq^*$ may contain more than one graph of order $n$.
However, we \cite{AbGh} conjectured that the quartic graph with minimum spectral gap of any given order is unique with the structure described below. To this end, we  define a subfmaily $\Fq$ of $\Fq^*$ which contains a unique graph of any order $n\ge11$.
	Let $q$ and $r\le4$ be non-negative integers such that $n-11=5q+r$.
Then the $n$-vertex graph in $\Fq$ consists of $q$ middle blocks $M$ and each end block is determined by $r$ as follows. If $r=0$, then both end blocks are $D_1$. If $r=1$, then  the end blocks are $D_1$ and $D_2$. If $r=2$, then both end blocks are $D_2$. If $r=3$, then  the end blocks are $D_2$ and $D_3$.  Finally, if  $r=4$, then  the end blocks are $D_1$ and $D_5$.

\begin{conjecture}[\cite{AbGh}]\label{conj:MinQuartic}\rm
	For every $n\ge11$, the $n$-vertex graph  of $\Fq$
	is the unique graph with minimum spectral gap among connected quartic graphs of order $n$.	
\end{conjecture}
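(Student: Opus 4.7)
The plan is, for each $n\ge 11$, to enumerate the members of $\Fq^*$ of order $n$ and to show that the unique element of $\Fq$ has strictly smaller spectral gap than every other. By Theorem~\ref{thm:quartic} it is enough to argue inside $\Fq^*$. Any graph in $\Fq^*$ is specified by its left end block $B_\ell \in \{D_1,\ldots,D_5\}$, its right end block $B_r$ (a mirror of some $D_j$), and the number $q$ of interior $M$-blocks; the latter is pinned down by the total vertex count, so for each residue $r\in\{0,1,2,3,4\}$ of $n-11$ modulo $5$ only a short list of end-block pairs is compatible with $n$. The problem thus reduces to finitely many comparisons that must be carried out uniformly in~$n$.

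The first step would be to apply the Floquet--Bloch analysis developed in Section~\ref{sec:Floque-Bloch} to the infinite periodic quartic graph assembled from $M$-blocks. This yields the bands of continuous spectrum, which are common to every configuration in $\Fq^*$. The eigenvalues of a finite $\Fq^*$-graph then split into bulk eigenvalues lying in (or accumulating on) these bands, together with defect eigenvalues outside the bands that are localised near the two end blocks. Since the largest eigenvalue is $4$ and the top band sits strictly below $4$, the second-largest eigenvalue $\la_2$, and hence the spectral gap $4-\la_2$, is controlled by whichever defect eigenvalue is pushed highest into the interval between the top of the band and~$4$. Minimising the gap is therefore equivalent to maximising this defect eigenvalue over the admissible end-block pairs.

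The second step would be a transfer-matrix computation along the path-like backbone. For $\la$ close to $4$ the eigen-equation on a single $M$-block gives a transfer matrix $T(\la)$ acting on the finite-dimensional space of boundary values at the two attaching vertices, and an eigenfunction on the whole graph exists exactly when the boundary conditions imposed by $B_\ell$ and $B_r$ are compatible with $T(\la)^q$. Diagonalising $T(\la)$, the dominant eigenvalue governs the behaviour as $q\to\infty$, so the leading defect eigenvalue converges exponentially in $q$ to a limit depending only on $(B_\ell,B_r)$. Computing these limits on the five blocks $D_1,\ldots,D_5$, and using the simplicity of $\la_2$ guaranteed by the results in Section~\ref{sec:simplicity} to justify a first-order perturbation argument for the finite-$q$ correction, one can in principle verify case-by-case across $r\in\{0,1,2,3,4\}$ that the pair prescribed by Conjecture~\ref{conj:MinQuartic} uniquely realises the largest limit and that the subleading correction in $q$ carries the correct sign.

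The main obstacle I anticipate is the intermediate-$n$ regime: for $n$ close to $11$ one can compare candidates by a direct calculation on small graphs, and for large $n$ the asymptotic analysis controls the comparison, but in between the difference between competing spectral gaps is exponentially small in $q$ and ensuring it has the correct sign for every $n$ demands uniform control of the subleading transfer-matrix terms. A closely related subtlety is that in some residue classes two different end-block pairs may give the same leading limit, so the tie must be broken by the next-order term; this forces one to compute explicitly the eigenvector of $T(\la)$ near $\la=4$ and to pair it against the detailed structure of each $D_i$, which is where I expect the argument to become genuinely delicate.
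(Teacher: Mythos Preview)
The statement you are attempting to prove is labelled a \emph{Conjecture} in the paper, and the paper does not prove it; it is cited from \cite{AbGh} as an open problem. There is therefore no proof in the paper to compare your proposal against. What the paper does establish is Theorem~\ref{thm:quartic} (the minimal quartic graph lies in $\Fq^*$), and it then uses the conjecture only hypothetically, e.g.\ to sharpen the list of possible non-simple eigenvalues after Theorem~\ref{thm:simple}.

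As for the proposal itself: you are candid that it is a plan rather than a proof, and the obstacles you identify are real. The reduction to finitely many end-block pairs per residue class is correct, and the transfer-matrix/Floquet picture is the natural framework. But the crux---showing that the $\Fq$ pair strictly beats every competitor for \emph{every} $n$, not just asymptotically---is exactly the step you flag as ``genuinely delicate,'' and nothing in your outline closes it. In particular, the claim that the simplicity results of Section~\ref{sec:simplicity} justify a first-order perturbation argument controlling the sign of an exponentially small difference uniformly in $n$ would need substantial work: simplicity gives differentiability of $\lambda_2$ in the right parameters, but not the uniform lower bounds on the gap between competing configurations that you need. So your write-up is a reasonable research programme, not a proof, and since the paper offers no proof either, the honest conclusion is that the conjecture remains open.
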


For every $n\equiv1\pmod5$, let us denote the graph of order $n$ of $\Fq$ by $\ga_n$.  The end blocks of $\ga_n$ are both $D_1$. Figure~\ref{fig:Gamma_n} illustrates this graph.

\begin{figure} 
	\centering
	\begin{tikzpicture}[scale=.9]
		\vertex[fill] (1) at (0,0) []{};
		\vertex[fill] (2) at (.5,.5) [] {};
		\vertex[fill] (3) at (.5,-.5) [] {};
		\vertex[fill] (4) at (1.5,.5) [] {};
		\vertex[fill] (5) at (1.5,-.5) [] {};
		\vertex[fill] (6) at (2,0) [] {};
		\vertex[fill] (7) at (2.5,.5) [] {};
		\vertex[fill] (8) at (2.5,-.5) [] {};
		\vertex[fill] (9) at (3.5,.5) [] {};
		\vertex[fill] (10) at (3.5,-.5) [] {};
		\vertex[fill] (11) at (4,0) [] {};
		\vertex[fill] (22) at (5.9,0) [] {};
		\vertex[fill] (23) at (6.4,.5) [] {};
		\vertex[fill] (24) at (6.4,-.5) [] {};
		\vertex[fill] (25) at (7.4,.5) [] {};
		\vertex[fill] (26) at (7.4,-.5) [] {};
		\vertex[fill] (27) at (8,0) [] {};
		\vertex[fill] (28) at (8.6,.5) [] {};
		\vertex[fill] (29) at (8.6,-.5) [] {};
		\vertex[fill] (30) at (9.6,.5) [] {};
		\vertex[fill] (31) at (9.6,-.5) [] {};
		\vertex[fill] (32) at (10.1,0) [] {};
		\tikzstyle{vertex}=[circle, draw, inner sep=0pt, minimum size=0pt]
		\vertex[] (12) at (4.2,.2)[] {};
		\vertex[] (13) at (4.2,-.2)[] {};
		\vertex[] (20) at (5.7,.2)[] {};
		\vertex[] (21) at (5.7,-.2)[] {};
		\tikzstyle{vertex}=[circle, draw, inner sep=0pt, minimum size=1pt]
		\vertex[fill] (14) at (4.75,0)[]{} ;
		\vertex[fill] (16) at (4.95,0)[]{} ;
		\vertex[fill] (18) at (5.15,0)[]{} ;
		\path
		(1) edge (2)
		(1) edge (3)
		(1) edge (4)
		(2) edge (3)
		(2) edge (4)
		(3) edge (4)
		(1) edge (5)
		(2) edge (5)	
		(3) edge (5)
		(4) edge (6)
		(5) edge (6)
		(6) edge (7)
		(6) edge (8)
		(7) edge (8)
		(7) edge (9)
		(7) edge (10)
		(8) edge (9)
		(8) edge (10)
		(9) edge (10)
		(9) edge (11)
		(10) edge (11)
		(11) edge (12)
		(11) edge (13)
		(20) edge (22)
		(21) edge (22)
		(22) edge (23)
		(22) edge (24)
		(23) edge (24)
		(23) edge (25)
		(23) edge (26)
		(24) edge (25)
		(24) edge (26)
		(25) edge (26)
		(25) edge (27)
		(26) edge (27)
		(27) edge (28)
		(27) edge (29)
		(28) edge (30)
		(28) edge (31)
		(28) edge (32)
		(29) edge (30)
		(29) edge (31)
		(29) edge (32)
		(30) edge (31)
		(30) edge (32)
		(31) edge (32)
		;
	\end{tikzpicture}
	\caption{The graph $\ga_n$}	
	\label{fig:Gamma_n}
\end{figure}
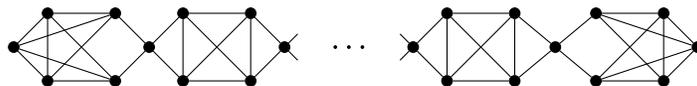

\subsection{Some basic facts on graph eigenvalues}

Let $G$ be a simple finite graph with vertex set $V(G)$ and edge set $E(G)$. Also let $A(G)$ be the adjacency matrix of $G$. It is well known that the smallest eigenvalue $\rho(G)$ satisfies the following:
\begin{equation*}\label{eq:least}
	\rho(G)=\min_{{\bf0}\ne\x\in\mathbb{R}^n}\frac{\x A(G)\x^\top}{\x\x^\top}.
\end{equation*}\label{eq:least1}
Also  if $\x=(x_1,\ldots,x_n)$, then
\begin{equation*}\label{eq2}
	\x A(G)\x^\top=2\sum_{ij\in E(G)}x_ix_j.
\end{equation*}
If $\x$ is an eigenvector corresponding to an eigenvalue $\lambda$ of $G$, then
\begin{equation}\label{eq:eigenvector}
	\lambda x_i=\sum_{j:\,ij\in E(G)}x_j, \quad\hbox{for}~ i=1, \ldots, n.
\end{equation}
We refer to \eqref{eq:eigenvector} as the {\em  eigen-equation}.

The following lemma gives the well-known eigenvalue
{\em interlacing} inequalities.

\begin{lemma}[{\rm \cite[p. $26$]{Haemers}}]\label{lem:interlacing}
	If $G$ is a graph  with eigenvalues $\lambda_1(G)\geq \cdots \geq \lambda_n(G)$, and
	$H$ is an induced subgraph of $G$ with eigenvalues $\lambda_1(H)\geq \cdots \geq \lambda_m(H)$, then
	\[\lambda_i(G)\geq \lambda_i(H) \geq \lambda_{n-m+i}(G), \quad\hbox{for}~i=1,\ldots, m.\]
\end{lemma}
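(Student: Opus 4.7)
The plan is to reduce the statement to the classical Cauchy interlacing theorem for real symmetric matrices. Since $H$ is an induced subgraph of $G$, after relabelling the vertices of $G$ so that those of $H$ come first, the adjacency matrix $A(H)$ is precisely the leading $m\times m$ principal submatrix of $A(G)$; it therefore suffices to prove interlacing for an arbitrary real symmetric matrix and an arbitrary principal submatrix of it.

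First I would invoke the Courant--Fischer min--max characterization: for any real symmetric $k\times k$ matrix $M$ with eigenvalues $\mu_1\geq\cdots\geq\mu_k$,
\[
\mu_i(M)=\max_{\substack{U\subseteq\mathbb{R}^k\\ \dim U=i}}\;\min_{\x\in U,\,\x\neq\mathbf 0}\frac{\x M\x^\top}{\x\x^\top}.
\]
For the upper inequality $\lambda_i(G)\geq\lambda_i(H)$, I would take an $i$-dimensional subspace $U\subseteq\mathbb{R}^m$ attaining the maximum in the characterization of $\lambda_i(H)$ and embed it into $\mathbb{R}^n$ by zero-padding on the coordinates corresponding to $V(G)\setminus V(H)$. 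Each padded vector $\widetilde\x$ satisfies $\widetilde\x A(G)\widetilde\x^\top=\x A(H)\x^\top$ and $\widetilde\x\widetilde\x^\top=\x\x^\top$, because the entries of $A(G)$ outside the chosen principal submatrix are paired only with zero coordinates. Hence the padded image is a valid $i$-dimensional competitor in the max--min for $\lambda_i(G)$, giving $\lambda_i(G)\geq\lambda_i(H)$.

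For the lower inequality $\lambda_i(H)\geq\lambda_{n-m+i}(G)$, I would apply the just-established upper inequality to the matrices $-A(G)$ and $-A(H)$, whose eigenvalues are obtained from those of $A(G)$ and $A(H)$ by sign change and reversal of order. Concretely, the $j$-th largest eigenvalue of $-A(G)$ equals $-\lambda_{n-j+1}(G)$, and similarly for $H$; applying the upper inequality to these negated matrices yields $-\lambda_{n-j+1}(G)\geq -\lambda_{m-j+1}(H)$ for $j=1,\dots,m$. Rearranging and re-indexing $i:=m-j+1$ produces exactly $\lambda_i(H)\geq\lambda_{n-m+i}(G)$, as required.

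No substantive obstacle is expected: the entire argument reduces to one appeal to Courant--Fischer combined with the zero-padding embedding, and the only step requiring genuine care is the index bookkeeping in the dual (sign-reversal) step, which must be carried out so that the shift by $n-m$ lines up correctly on both sides.
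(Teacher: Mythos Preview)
Your proof is correct and is the standard Courant--Fischer argument for Cauchy interlacing. The paper itself does not supply a proof of this lemma; it merely quotes it as a well-known result from \cite{Haemers}, so there is no paper-side argument to compare against.
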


A partition $\Pi= \lbrace C_1,  \ldots , C_{m} \rbrace$ of $V(G)$
is called an {\em equitable partition} for $G$ if for every pair of (not necessarily distinct) indices $i, j \in \lbrace 1,  \ldots , m\rbrace$, there is a non-negative integer $q_{ij}$ such that each vertex $v$ in the cell $C_i$ has exactly $q_{ij}$ neighbors in the cell $C_j$, regardless of the choice of $v$. The matrix  $Q = (q_{ij})$ is called the {\em quotient matrix} of $\Pi$. The following lemma gives a characterization of the eigenvectors of $G$ in terms of $\Pi$.

\begin{lemma} [{\rm \cite[p.~24]{Haemers}}] \label{lem:type}
	Let $G$ be a graph with an equitable partition  $\Pi$. The spectrum of $G$ consists of the spectrum of the quotient matrix of $\Pi$, with eigenvectors  that are constant on the cells of  $\Pi$, together with the eigenvalues belonging to eigenvectors whose components sum up to zero on each cell of $\Pi$.
\end{lemma}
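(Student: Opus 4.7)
The plan is to reduce the statement to a clean decomposition of $\Rbb^n$ into two $A$-invariant subspaces, where $A=A(G)$, determined by the partition $\Pi=\{C_1,\dots,C_m\}$. Let $P$ be the $n\times m$ characteristic matrix of $\Pi$, i.e.\ $P_{vi}=1$ if $v\in C_i$ and $0$ otherwise. The column space $\operatorname{col}(P)\subseteq\Rbb^n$ is precisely the set of vectors that are constant on each cell of $\Pi$, and the columns of $P$ are linearly independent, so $P$ has full column rank $m$. The defining property of an equitable partition translates into the single matrix identity $AP=PQ$, where $Q=(q_{ij})$ is the quotient matrix; indeed, the $(v,j)$-entry of $AP$ counts the neighbors of $v$ in $C_j$, which equals $q_{ij}$ whenever $v\in C_i$, which in turn is the $(v,j)$-entry of $PQ$.

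From $AP=PQ$, whenever $Qy=\la y$ we get $A(Py)=P(Qy)=\la(Py)$, so every eigenvector $y$ of $Q$ lifts to an eigenvector $Py$ of $A$ with the same eigenvalue, and this lifted vector is constant on the cells of $\Pi$. Conversely, any eigenvector of $A$ that is constant on the cells can be written as $Py$ for a unique $y\in\Rbb^m$, and cancelling $P$ (possible because $P$ has full column rank after applying $P^\top$ and using the diagonal matrix $P^\top P=\operatorname{diag}(|C_1|,\ldots,|C_m|)$, which is invertible) shows $Qy=\la y$. This establishes the first half of the statement.

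For the second half, I would use that $A$ is symmetric: since $\operatorname{col}(P)$ is $A$-invariant, its orthogonal complement $\operatorname{col}(P)^\perp$ is also $A$-invariant. By the characterization of $\operatorname{col}(P)$ as the span of the indicator vectors of the cells, a vector $\x\in\Rbb^n$ lies in $\operatorname{col}(P)^\perp$ exactly when $\sum_{v\in C_i}x_v=0$ for every $i$, i.e.\ its components sum to zero on each cell. Because $A$ restricted to $\operatorname{col}(P)^\perp$ is still symmetric, this invariant subspace admits an orthonormal eigenbasis of $A$, and every such eigenvector of $A$ sums to zero on each cell. Combining with the previous paragraph and the orthogonal decomposition $\Rbb^n=\operatorname{col}(P)\oplus\operatorname{col}(P)^\perp$, we recover the full spectrum of $A$ as the disjoint multiset union of the spectrum of $Q$ and the eigenvalues associated with vectors in $\operatorname{col}(P)^\perp$.

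Since this is a classical and routine fact whose statement the paper simply imports from \cite{Haemers}, there is no real obstacle; the only delicate points are verifying that $P^\top P$ is invertible (immediate, as $\Pi$ is a partition with nonempty cells) and confirming that $A$-invariance of $\operatorname{col}(P)$ transfers to $\operatorname{col}(P)^\perp$, which uses nothing more than the symmetry of the adjacency matrix. No calculation with specific graphs $\de_n$ or $\ga_n$ is needed at this stage; the lemma will be invoked later, in Section~\ref{sec:simplicity}, to exploit the natural symmetries of these graphs.
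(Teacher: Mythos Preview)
Your argument is correct and is essentially the standard proof found in the cited reference \cite{Haemers}; the paper itself does not give a proof of this lemma but simply imports it from the literature, so there is nothing further to compare.
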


This decomposition of the spectra has also a
group-theoretic meaning. It is taking the quotient with respect to
the irreducible representations of symmetry group (generated by flip
symmetries of the individual blocks of the graph); see \cite{band,mutlu}.
This interpretation, in particular, implies the self-adjointness of the quotient matrix which will be used in our arguments.

\section{Infinite periodic graphs and Floque--Bloch theory}\label{sec:Floque-Bloch}

Removing the end blocks of the graphs $\de_n$ and $\ga_n$, the resulting graphs have a repetitive structure which are indeed finite chunks of two infinite periodic
graphs. We denote these infinite graphs by $\bde$ and $\bga$, respectively.
The spectrum (of the adjacency operator; see below) of such periodic infinite graphs can be investigated by some tools from
mathematical physics, namely the Floquet--Bloch transform. This provides a powerful tool to investigate the band structure (and the gaps in the spectra) of
the infinite periodic graphs. For an introduction see  \cite[Chapter 4]{Berkolaiko} and \cite{Exner}.
Koll\'ar and Sarnak \cite{Sarnak}  also utilized this to construct cubic graphs with `extremal' gaps.

In what follows, we give a brief account of Floquet--Bloch theory and apply it to $\bde$ as the running example.  Further details can be found in \cite{Berkolaiko,Exner,Harrison}.

 The adjacency matrix $A$ of a finite graph $G$ can be considered as an operator  acting on the functions $f:V(G)\to\Rbb$ or $\mathbb{C}$ defined by
\begin{equation}\label{eq:Af}
(Af)(u)=\sum_{v:\,uv\in E(G)}f(v),\quad\hbox{for all $u\in V(G)$.}
\end{equation}
Let $\bG$ be a (locally finite) infinite graph with countable number of vertices. 
As an analogue to  the adjacency matrix in the finite case, one can define the {\em adjacency operator}  $A(\bG)$ acting
on the functions  $f:V(\bG)\to\mathbb{C}$ defined in the same way as in \eqref{eq:Af}.
In order to be able to have the notion of `spectrum' for  $A(\bG)$, we restrict its domain to the space
 $\ell^2(\bG)$, that is the Hilbert space of square-summable complex functions defined on $V(\bG)$ (see, for instance \cite[Appendix~C]{Berkolaiko}).

An infinite graph is called $\mathbb{Z}$-{\em periodic} if it is equipped with an action of the free Abelian group $\mathbb{Z}$. Roughly speaking, we say that a graph is $\mathbb{Z}$-periodic if it is built of an infinite number of copies of a fixed compact  graph, the {\em fundamental domain}, glued together along a two-way infinite path. The choice of a fundamental domain is not unique  (see \cite[p.~106]{Berkolaiko} for more details). Figure~\ref{fig:W} illustrates a fundamental domain for each of the graphs $\bde$ and $\bga$. 

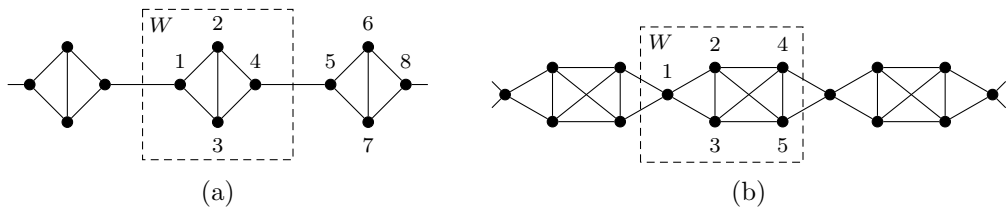
\begin{figure}[h!]
	\centering
	\subfloat[]{\begin{tikzpicture}[scale=1]
			\vertex[fill] (2) at (0,0) [] {};
			\vertex[fill] (3) at (.5,-.5) [] {};
			\vertex[fill] (4) at (.5,.5) [] {};
			\vertex[fill] (5) at (1,0) [] {};
			\vertex[fill] (6) at (2,0) [label=above:\scriptsize{$1$}] {};
			\vertex[fill] (7) at (2.5,.5) [label=above:\scriptsize{$2$}] {};
			\vertex[fill] (8) at (2.5,-.5) [label=below:\scriptsize{$3$}] {};
			\vertex[fill] (9) at (3,0) [label=above:\scriptsize{$4$}] {};
			\vertex[fill] (10) at (4,0) [label=above:\scriptsize{$5$}] {};
			\vertex[fill] (11) at (4.5,.5) [label=above:\scriptsize{$6$}] {};
			\vertex[fill] (12) at (4.5,-.5)  [label=below:\scriptsize{$7$}] {};
			\vertex[fill] (13) at (5,0)  [label=above:\scriptsize{$8$}] {};
			\tikzstyle{vertex}=[circle, draw, inner sep=0pt, minimum size=0pt]
			\vertex (s) at (-.3,0)[] {};
			\vertex (ss) at (5.3,0) [] {};
			\vertex[] () at (1.9,.55) [label=above:\scriptsize{$W\ \ \ $}] {};
			\path
			(4) edge (3)
			(2) edge (3)
			(2) edge (4)
			(3) edge (5)
			(4) edge (5)
			(5) edge (6)
			(6) edge (7)
			(6) edge (8)
			(7) edge (8)
			(7) edge (9)
			(8) edge (9)
			(9) edge (10)
			(10) edge (11)
			(10) edge (12)
			(11) edge (12)
			(11) edge (13)
			(12) edge (13)
			(2) edge (s)
			(ss) edge (13)  ;
			\draw[densely dashed]  (1.5,1) rectangle (3.5,-1) ;
		\end{tikzpicture}\label{fig:Wcubic}}
	\quad \quad
	\subfloat[]{\begin{tikzpicture}[scale=0.9]
			\vertex[fill] (r13) at (5.1,.4) [] {};
			\vertex[fill] (r14) at (5.8,.8) [] {};
			\vertex[fill] (r15) at (5.8,0) [] {};
			\vertex[fill] (r16) at (6.8,.8) [] {};
			\vertex[fill] (r17) at (6.8,0) [] {};
			\vertex[fill] (r18) at (7.5,.4) [label=above:\scriptsize{$1$}] {};
			\vertex[fill] (r19) at (8.2,.8) [label=above:\scriptsize{$2$}] {};
			\vertex[fill] (r20) at (8.2,0) [label=below:\scriptsize{$3$}] {};
			\vertex[fill] (r21) at (9.2,.8) [label=above:\scriptsize{$4$}] {};
			\vertex[fill] (r22) at (9.2,0) [label=below:\scriptsize{$5$}] {};
			\vertex[fill] (r23) at (9.9,.4)  {};
			\vertex[fill] (r24) at (10.6,.8) [] {};
			\vertex[fill] (r25) at (10.6,0) [] {};
			\vertex[fill] (r26) at (11.6,.8) [] {};
			\vertex[fill] (r27) at (11.6,0) [] {};
			\vertex[fill] (r28) at (12.3,.4) [] {};
			\tikzstyle{vertex}=[circle, draw, inner sep=0pt, minimum size=0pt]
			\vertex (s) at (12.5,.6)[] {};
			\vertex (ss) at (12.5,.2) [] {};
			\vertex (sss) at (4.9,.6)[] {};
			\vertex (ssss) at (4.9,.2) [] {};
			\vertex[] () at (7.4,.9) [label=above:\scriptsize{$W$}] {};
			\path
			(r13) edge (r14)
			(r13) edge (r15)
			(r14) edge (r15)
			(r14) edge (r16)
			(r14) edge (r17)
			(r15) edge (r16)
			(r15) edge (r17)
			(r16) edge (r17)
			(r18) edge (r17)
			(r16) edge (r18)
			(r18) edge (r19)
			(r18) edge (r20)
			(r19) edge (r20)
			(r19) edge (r21)
			(r19) edge (r22)
			(r20) edge (r21)
			(r20) edge (r22)
			(r21) edge (r23)
			(r21) edge (r22)
			(r22) edge (r23)
			(r23) edge (r24)
			(r23) edge (r25)
			(r24) edge (r25)
			(r24) edge (r26)
			(r24) edge (r27)
			(r25) edge (r26)
			(r25) edge (r27)
			(r26) edge (r27)
			(r26) edge (r28)
			(r27) edge (r28)
			(s) edge (r28)
			(ss) edge (r28)
			(sss) edge (r13)
			(ssss) edge (r13) ;
			\draw[densely dashed]  (7.1,-.6) rectangle (9.5,1.4) ;
		\end{tikzpicture}\label{fig:Wquartic}}
	\caption{Fundamental domains $W$ for the infinite periodic graphs $\bde$ and $\bga$}\label{fig:W}
\end{figure} 

For $q\in\Z$, let $T_q$ denote the $q$-shift automorphism of $\bde$. For instance, considering the labels of Figure~\ref{fig:Wcubic}, $T_1$ shifts the vertices $1,2,3,4$ to the vertices $5,6,7,8$, respectively.
We can also say that the neighbors of the vertex $1$ are the vertices $2$, $3$ and $T_{-1}(4)$.

For a given $\theta\in [-\pi,\pi]$, let $\ell^2_\theta$ denote the subspace of $\ell^2(\bG)$ consisting of those functions $f$ such that
$$f(T_q(v))=e^{iq\theta}f(v),\quad\hbox{for all}~q\in\Z, v\in V(\bde).$$
Such a function $f$ is clearly uniquely determined
by the vector $\left(f(1) ,f(2),f(3),f(4)\right)$ of its values at the four vertices in $W$, and thus $\ell^2_\theta$
is $4$-dimensional and naturally isomorphic to $\ell^2(W)$.
We now define the operator $A_\theta : \ell^2(W)\to\ell^2(W)$ as the restriction to
the space $\ell^2_\theta$ of the operator $A(\bde)$.
More precisely, for $f\in\ell^2_\theta$, we have
\begin{align*}
(A_\theta f)(1)&=f(2)+f(3)+f(T_{-1}(4))=f(2)+f(3)+e^{-i\theta}f(4),\\
(A_\theta f)(2)&=f(1)+f(3)+f(4),\\
(A_\theta f)(3)&=f(1)+f(2)+f(4),\\
(A_\theta f)(4)&=f(T_{1}(1))+f(2)+f(3)=e^{i\theta}f(1)+f(2)+f(3).
\end{align*}
It follows that $A_\theta$ can be identified by the following matrix:
$$\begin{pmatrix}
 0&1&1&e^{i\theta}\\
 1&0&1&1\\
 1&1&0&1\\
 e^{-i\theta}&1&1&0
 \end{pmatrix}.$$
Let $\la_1(\theta)\ge\cdots\ge\la_4(\theta)$ be the eigenvalues of $A_\theta$.

Floquet--Bloch theory says that the spectrum of
$A(\bG)$ is equal to the union of the spectra of the operators
$A_\theta$ for all  $\theta\in [-\pi,\pi]$.
\begin{theorem}[see {\cite[Theorem~4.3.1]{Berkolaiko}}]\label{thm:Floquet-Bloch}
 The spectrum of $A(\bde)$ is given by
$\textstyle\bigcup\limits_{\theta\in [-\pi,\pi]}\{\la_1(\theta),\ldots,\la_4(\theta)\}$.
Moreover each of the functions $\theta\mapsto\la_j(\theta)$ is a  continuous function.
\end{theorem}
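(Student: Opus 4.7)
The plan is to derive Theorem~\ref{thm:Floquet-Bloch} as a concrete instance of the general Floquet--Bloch decomposition theorem for $\Z$-periodic operators. The main tool is the Floquet--Bloch transform
$$\mathcal{F}\colon \ell^2(\bde) \longrightarrow \int^{\oplus}_{[-\pi,\pi]} \ell^2_\theta \,\frac{d\theta}{2\pi},$$
defined on finitely supported $f\in \ell^2(\bde)$ by
$$(\mathcal{F}f)_\theta(v) \;=\; \sum_{q \in \Z} e^{-iq\theta}\, f(T_q(v)),\qquad v \in W,$$
and extended to all of $V(\bde)$ by the quasi-periodicity built into $\ell^2_\theta$. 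First I would verify that $\mathcal{F}$ extends to a unitary isomorphism by a Plancherel-type identity: write $\|f\|_2^2 = \sum_{v \in W}\sum_{q\in\Z}|f(T_q(v))|^2$ and apply Parseval on $\ell^2(\Z)$ orbit-by-orbit along $\{T_q(v)\}_{q\in\Z}$.

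Next I would check that $\mathcal{F}$ intertwines $A(\bde)$ with the direct integral $\int^{\oplus}\! A_\theta\,\frac{d\theta}{2\pi}$. This reduces to the observation that $A(\bde)$ commutes with every shift automorphism $T_q$, so each quasi-periodic subspace $\ell^2_\theta$ is invariant under $A(\bde)$; the explicit action on the four representatives $1,2,3,4 \in W$ recorded just before the theorem then shows that $A(\bde)$ restricted to $\ell^2_\theta$ is precisely the $4 \times 4$ Hermitian matrix $A_\theta$.

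Once the intertwining is in hand, the spectral theorem for direct integrals of bounded self-adjoint operators yields
$$\sigma\!\left(A(\bde)\right) \;=\; \overline{\bigcup_{\theta \in [-\pi,\pi]} \sigma(A_\theta)} \;=\; \bigcup_{\theta \in [-\pi,\pi]} \{\la_1(\theta),\ldots,\la_4(\theta)\}.$$
The closure is redundant because $\theta \mapsto A_\theta$ is norm-continuous on the compact set $[-\pi,\pi]$, so the union on the right is already closed. The continuity of each $\la_j$ is a standard consequence of Weyl's perturbation inequalities for Hermitian matrices: the entries of $A_\theta$ depend analytically on $\theta$, hence the eigenvalues arranged in decreasing order depend continuously on $\theta$.

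The main technical obstacle is proving the unitarity of $\mathcal{F}$: the quasi-periodic functions populating $\ell^2_\theta$ are not themselves elements of $\ell^2(\bde)$, so one must work inside a direct-integral Hilbert space and extend $\mathcal{F}$ from compactly supported data to the full domain by density. The remaining steps (the intertwining computation and the continuity of the bands) are then routine applications of standard facts from functional analysis and matrix perturbation theory, and the same scheme applies verbatim to $\bga$ with a $6\times 6$ quasi-momentum matrix replacing $A_\theta$.
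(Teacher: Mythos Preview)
The paper does not prove Theorem~\ref{thm:Floquet-Bloch}; it is quoted as a known result from \cite[Theorem~4.3.1]{Berkolaiko} and used as a black box. Your proposal is therefore not being compared against an in-paper proof but against a citation. That said, your sketch is the standard argument underlying the cited theorem: the Floquet--Bloch transform $\mathcal{F}$ is shown to be unitary by a Plancherel computation along $\Z$-orbits, the intertwining $\mathcal{F}A(\bde)\mathcal{F}^{-1}=\int^\oplus A_\theta\,\frac{d\theta}{2\pi}$ follows from shift-invariance of $A(\bde)$, and the band continuity comes from norm-continuity of $\theta\mapsto A_\theta$ together with Weyl's inequality. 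This is correct in outline and matches what one finds in \cite{Berkolaiko}.

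One small slip: in your closing sentence you say the scheme applies to $\bga$ with a $6\times 6$ matrix, but the fundamental domain for $\bga$ in Figure~\ref{fig:Wquartic} has five vertices, and the paper displays the corresponding $A_\theta$ as a $5\times 5$ matrix.
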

 The segments $$I_j =\textstyle\bigcup\limits_{\theta\in [-\pi,\pi]}\la_j(\theta)$$ are called {\em bands of
the spectrum of} $A(\bde)$.

By straightforward computations, we obtain
\begin{align*}
\la_1(\theta)&=\tfrac13-\tfrac83\sin\left(h(\theta)\right),\\
\la_2(\theta)&=\tfrac13-\tfrac83\cos\left(h(\theta)+\pi/6\right),\\
\la_3(\theta)&=-1,\\
\la_4(\theta)&=\tfrac13+\tfrac83\sin\left(h(\theta)+\pi/3\right),
\end{align*}
where $h(\theta)=\frac13\sin^{-1}\left(\frac{5+27\cos(\theta)}{32}\right)$. In view of Theorem~\ref{thm:Floquet-Bloch}, these four functions determine the spectrum of $A(\bde)$.  They are visualized in Figure~\ref{fig:plotCubic}. It is seen that bands for $A(\bde)$ are $[-\sqrt5,-1]$, $[-1,1]$ and $[\sqrt5,3]$. Also, $(1,\sqrt{5})$ is a  gap interval for $\bde$. 

For the graph $\bga$, the corresponding matrix $A_\theta$ for the fundamental domain given in  Figure~\ref{fig:Wquartic} is as follows:
$$\begin{pmatrix}
 0&1&1&e^{i\theta}&e^{i\theta}\\
 1&0&1&1&1\\
 1&1&0&1&1\\
 e^{-i\theta}&1&1&0&1\\
 e^{-i\theta}&1&1&1&0
 \end{pmatrix}.$$
Similarly to the spectrum of $\bde$, the functions identifying the spectrum of $A(\bga)$ can be computed. We illustrate them
 in Figure~\ref{fig:plotQuartic} (note that here $-1$ has multiplicity two). It is seen that $[(-1-\sqrt{17})/2,-1]$, $[-1,(-1+\sqrt{17})/2]$ and $[3,4]$ are the bands and $\left((-1+\sqrt{17})/2,3\right)$ is a  gap interval.

 \input{fig-plotCubicQuartic.txt}
The spectra of the finite chunks will fill up the bands of the
infinite graph. But the boundary can create persistent eigenvalues in
the gaps. This is a highly non-trivial effect and has been linked
with the Chern number of the eigenvector bundle over the Brillouin
zone (which is a circle in this 1-dimensional setup). This is known
as the ``bulk-boundary correspondence" in the study of topological
insulators. See the book \cite{aopBook} for a very approachable introduction.
Proving that the boundary makes the eigenvalues appear (or not) in
the gap is in general a rather difficult job.

To get an idea about what the situation is with our graphs  $\de_n$ and $\ga_n$, we have computed their spectrum of  for several values   $n$; see the diagrams given in Figures~\ref{fig:EVC} and \ref{fig:EVQ}. The diagrams suggest that  $\de_n$ and $\ga_n$ retain the gaps $(1,\sqrt{5})$ and $\left((-1+\sqrt{17})/2,3\right)$, respectively. This is indeed our ultimate goal to be settled in the rest of the paper. 
 
\input{fig-EVQ.txt}

\section{Simplicity of eigenvalues of minimal regular graphs}\label{sec:simplicity}

In this section, we show that, except for two eigenvalues, namely $0,-1$, any eigenvalue of the graphs in $\Fc$ is simple. We also prove that, except for at most eight eigenvalues, the eigenvalues of the  graphs in $\Fq^*$ are simple.
Moreover, the number of exceptional eigenvalues can be reduced to four  if Conjecture~\ref{conj:MinQuartic} is true.

\subsection{Minimal cubic graphs}

Let $G\in\Fc$.
Every graph in $G$  possesses an equitable partition in which each cell has size $1$ or $2$, consisting of the vertices drawn vertically above each other as  displayed in  Figure~\ref{fig:mincubic}. We denote this equitable partition by $\Pi$ in this subsection.
Let $\lambda$ be an eigenvalue of $G$ with eigenvector $\x$. In view of Lemma~\ref{lem:type}, $\x$ is either constant on the cells of $\Pi$ or it is orthogonal to each cell of $\Pi$.
Accordingly, we call $\x$ of the {\em first} or {\em second type}, respectively. We first show that eigenvalues with second type eigenvectors are very restricted.
\begin{lemma}\label{lem:2ndtypeFc}
 Let $G\in\Fc$ and $\la$ be an eigenvalue of $G$ with an eigenvector of second type. If  $n\equiv2\pmod4$, then 
 $\lambda\in\{ 0, -1\}$ and if  $n\equiv0\pmod4$, then  
  $\lambda\in\left\{ 0, -1 , (-1\pm\sqrt5)/2\right\}$.
\end{lemma}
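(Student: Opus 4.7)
The plan is to exploit the equitable partition $\Pi$ (with cells of size $1$ or $2$ visible as vertical pairs in Figure~\ref{fig:mincubic}) together with Lemma~\ref{lem:type}. A second-type eigenvector $\x$ vanishes on every singleton cell, and on each paired cell $\{u,u'\}$ satisfies $x_u=-x_{u'}$. My strategy is to write the eigen-equation at one vertex of each paired cell, noting that contributions from any paired neighbour-cell cancel and contributions from singleton neighbour-cells vanish. This reduces each paired cell to a short linear constraint on its own parameter (and, in the asymmetric right-end, on the parameters of adjacent paired cells), forcing $\x=\mathbf{0}$ unless $\lambda$ lies in a specific finite set.

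Concretely, label the pair $\{1,2\}$ by $\alpha$ (so $x_1=-x_2=\alpha$), the pair $\{3,4\}$ by $\beta$, and each middle pair by $\gamma_i$. The eigen-equation at vertex $1$ gives $\lambda\alpha=x_2+x_3+x_4=-\alpha$, hence $(\lambda+1)\alpha=0$; at vertex $3$ it gives $\lambda\beta=x_1+x_2+x_5=0$; and at any middle pair vertex $v$, whose two external neighbours lie in singleton cells, $\lambda x_v=-x_v$, i.e.\ $(\lambda+1)\gamma_i=0$. When $n\equiv2\pmod4$, the right end is the mirror of the left end and yields the analogous constraints. Thus if $\lambda\notin\{0,-1\}$, every parameter vanishes and $\x=\mathbf{0}$, a contradiction.

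When $n\equiv0\pmod4$, the right end contains three consecutive paired cells $\{31,32\}$, $\{33,34\}$, $\{35,36\}$ with parameters $\beta',\beta'',\alpha'$, respectively (the outer two have an intra-cell edge, the middle one does not). Writing the eigen-equations at vertices $31$, $33$ and $35$ yields
\[
\beta''=(\lambda+1)\beta',\qquad \lambda\beta''=\beta',\qquad (\lambda+1)\alpha'=0.
\]
Substituting the first equation into the second gives $(\lambda^2+\lambda-1)\beta'=0$, so either $\beta'=0$ or $\lambda=(-1\pm\sqrt5)/2$. Combined with the left-end and middle constraints, any $\lambda$ outside $\{0,-1,(-1\pm\sqrt5)/2\}$ forces $\x=\mathbf{0}$. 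The one step that really needs care, rather than being truly hard, is correctly identifying the three-cell recurrence at the asymmetric right tail; it is precisely this recurrence that produces the two extra eigenvalues $(-1\pm\sqrt5)/2$, while the mirror symmetry available when $n\equiv2\pmod4$ rules them out.
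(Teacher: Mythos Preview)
Your proof is correct and follows essentially the same approach as the paper's own proof: both exploit that a second-type eigenvector vanishes on singleton cells and is antisymmetric on paired cells, then write the eigen-equation at one representative of each paired cell to pin down $\lambda$. The paper phrases it as ``find a block on which $\x$ is nonzero and read off $\lambda$ from that block'', while you phrase it contrapositively as ``if $\lambda$ lies outside the claimed set, every block parameter is forced to zero''; the underlying computations (yielding $(\lambda+1)\alpha=0$, $\lambda\beta=0$, $(\lambda+1)\gamma_i=0$ for the symmetric end and middle, and $(\lambda^2+\lambda-1)\beta'=0$, $(\lambda+1)\alpha'=0$ for the asymmetric end) are identical. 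One small remark: your sentence ``contributions from any paired neighbour-cell cancel'' is only accurate when the vertex is adjacent to \emph{both} members of that cell, which fails precisely at the asymmetric right tail---but your explicit equations there correctly account for this, so the actual argument is fine.
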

\begin{proof}
Let $\x$ be an eigenvector of $\la$ of the second type. 
 The components of $\x$ sum up to zero on each cell of the partition $\Pi$. In particular, the components of $\x$ on the cut vertices are zero.
Note that $\x\neq 0$ and we can always find a non-$K_2$ block $B$ such that  the components of $\x$ are not all zero on it.
Firstly, let $B$ be a middle block. The components of $\x$ on $B$ are as shown in Figure~\ref{fig:thcubica}.
Since the vertices $i$ and ${i+1}$ belong to a cell and $\x$ is of the second type, we have $x_i+x_{i+1}=0$.
Also by the eigen-equation, we see that $\lambda  x_i=x_{i+1}$. It follows that  $\lambda=-1$.
Next, let $B$ be an end block. The components of $\x$ on $B$ are as shown in Figure~\ref{fig:thcubicb} or \ref{fig:thcubicc}.
In the first case, if $x_3\neq 0$, then $\lambda x_3=0$, so $\lambda=0$.
Otherwise, $x_1\neq 0$ and from $\lambda x_1=-x_1$, we obtain that $\lambda=-1$.
In the second case, we have  $\lambda x_3=x_5$. So if $x_5\neq 0$, then $x_3\neq 0$. Hence from $\lambda x_5=-x_5+x_3$ and $\lambda x_3=x_5$, we obtain $\lambda^2+\lambda -1=0$, that is $\lambda=(-1\pm\sqrt{5})/{2}$.
If $x_5=0$, then $x_3=\lambda x_5=0$.  Therefore,
$x_1\neq 0$ and then by $\lambda x_1=-x_1$, we have $\lambda=-1$.
\end{proof}
\begin{figure}[h!]\label{fig:blocks}
 \centering
 \subfloat[]{\begin{tikzpicture}[scale=1]
    \vertex[fill] (6) at (2,0) [label=left:\scriptsize{$0$}] {};
    \vertex[fill] (7) at (2.5,.5) [label=above:\scriptsize{$x_i$}] {};
    \vertex[fill] (8) at (2.5,-.5) [label=below:\scriptsize{$x_{i+1}=-x_i$}] {};
    \vertex[fill] (9) at (3,0) [label=right:\scriptsize{$0$}] {};
	\path
	    (6) edge (7)
	    (6) edge (8)
	    (7) edge (8)
	    (7) edge (9)
	    (8) edge (9);
\end{tikzpicture}\label{fig:thcubica}}
\quad \quad
\subfloat[]{\begin{tikzpicture}[scale=1]
    \vertex[fill] (34) at (-11.7,-.5) [label=below:\scriptsize{$-x_1$}] {};
	\vertex[fill] (33) at (-11.7,.5) [label=above:\scriptsize{$x_1$}] {};
	\vertex[fill] (32) at (-10.7,-.5) [label=below:\scriptsize{$-x_3$}] {};
	\vertex[fill] (31) at (-10.7,.5) [label=above:\scriptsize{$x_3$}] {};
    \vertex[fill] (30) at (-10.2,0) [label=right:\scriptsize{$0$}] {};
	\path
	    (30) edge (31)
	    (30) edge (32)
	    (31) edge (33)
	    (31) edge (34)
	    (32) edge (33)
	    (32) edge (34)
	    (33) edge (34);
\end{tikzpicture}\label{fig:thcubicb}}
\quad \quad
\subfloat[]{\begin{tikzpicture}[scale=0.9]
    \vertex[fill] (34) at (-10.9,-.5) [label=below:\scriptsize{$-x_3$}] {};
	\vertex[fill] (33) at (-10.9,.5) [label=above:\scriptsize{$x_3$}] {};
	\vertex[fill] (32) at (-9.9,-.5) [label=below:\scriptsize{$-x_5$}] {};
	\vertex[fill] (31) at (-9.9,.5) [label=above:\scriptsize{$x_5$}] {};
    \vertex[fill] (30) at (-9.4,0) [label=right:\scriptsize{$0$}] {};
   \vertex[fill] (35) at (-11.9,-.5) [label=below:\scriptsize{$-x_1$}] {};
	\vertex[fill] (36) at (-11.9,.5) [label=above:\scriptsize{$x_1$}] {};
	\path
	    (30) edge (31)
	    (30) edge (32)
        (31) edge (32)
	    (31) edge (33)
	    (32) edge (34)
	    (33) edge (35)
	    (34) edge (36)
	    (33) edge (36)
	    (34) edge (35)
	    (35) edge (36);
\end{tikzpicture}\label{fig:thcubicc}}
\caption{ The  blocks of a minimal cubic graph and the components of an eigenvector of the second type }
\end{figure}
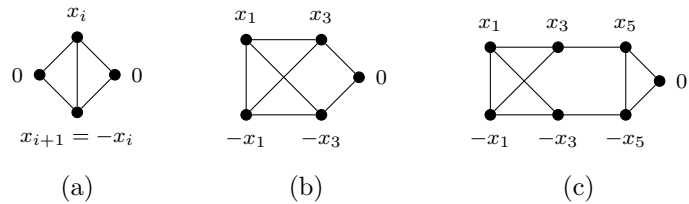

 For  $G\in\Fc$, the components of a eigenvector $\x$ of the first type are equal on the vertices in each cell. We call the components of $\x$ on the cells of size $2$ {\em main components} of $\x$; from the last block, we include only one such a cell, see Figure~\ref{fig:a}. 
 We will see that all other components of $\x$ are determined by its main components.  
 For $\la\ne0$, we can assume that the first main component is $1$; otherwise it must be $0$, which in turn by recursive application of the eigen-equation implies that $\x=\bf0$, a contradiction.
\begin{figure}
 \centering
\begin{tikzpicture}[scale=0.9]
	\vertex[fill] (1) at (0,-.5) [] {};
	\vertex[fill] (2) at (0,.5) [label=above:\footnotesize{$1$}] {};
	\vertex[fill] (3) at (1,-.5) [] {};
	\vertex[fill] (4) at (1,.5) [label=above:\footnotesize{$a_0$}] {};
    \vertex[fill] (5) at (1.5,0) [] {};
    \vertex[fill] (6) at (2,0) [] {};
    \vertex[fill] (7) at (2.5,.5) [label=above:\footnotesize{$a_1$}] {};
    \vertex[fill] (8) at (2.5,-.5) [] {};
    \vertex[fill] (9) at (3,0) [] {};
    \vertex[fill] (10) at (3.5,0) [] {};
    \vertex[fill] (11) at (4,.5) [label=above:\footnotesize{$a_2$}] {};
    \vertex[fill] (12) at (4,-.5) [] {};
    \vertex[fill] (13) at (4.5,0) [] {};
    \vertex[fill] (22) at (6.4,0) [] {};
    \vertex[fill] (23) at (6.9,.5) [label=above:\footnotesize{$a_{m-1}$}] {};
    \vertex[fill] (24) at (6.9,-.5) [] {};
    \vertex[fill] (25) at (7.4,0) [] {};
    \vertex[fill] (26) at (7.9,0) [] {};
    \vertex[fill] (27) at (8.4,.5) [label=above:\footnotesize{$a_{m}$}] {};
    \vertex[fill] (28) at (8.4,-.5) [] {};
    \vertex[fill] (29) at (8.9,0) [] {};
	\vertex[fill] (32) at (9.9,-.5) [] {};
	\vertex[fill] (31) at (9.9,.5) [label=above:\footnotesize{$a_{m+1}\ \ \ $}] {};
    \vertex[fill] (30) at (9.4,0) [] {};
    \tikzstyle{vertex}=[circle, draw, inner sep=0pt, minimum size=1pt]
    \vertex[fill] (15) at (5.25,0) [] {};
    \vertex[fill] (17) at (5.45,0) [] {};
    \vertex[fill] (19) at (5.65,0) [] {};
    \tikzstyle{vertex}=[circle, draw, inner sep=0pt, minimum size=0pt]
    \vertex[] (14) at (4.8,0) [] {};
    \vertex[] (21) at (6.1,0) [] {};
    \vertex[] (33) at (10.4,.5) [] {};
    \vertex[] (34) at (10.4,-.5) [] {};
    \vertex[] (35) at (10.4,.3) [] {};
    \vertex[] (36) at (10.4,-.3) [] {};
	\path
		(1) edge (2)
		(1) edge (3)
	    (1) edge (4)
		(2) edge (3)
        (2) edge (4)
		(3) edge (5)
	    (4) edge (5)
	    (5) edge (6)
	    (6) edge (7)
	    (6) edge (8)
	    (7) edge (8)
	    (7) edge (9)
	    (8) edge (9)
	    (9) edge (10)
	    (10) edge (11)
	    (10) edge (12)
	    (11) edge (12)
	    (11) edge (13)
	    (12) edge (13)
	    (13) edge (14)
	    (21) edge (22)
	    (22) edge (23)
	    (22) edge (24)
	    (23) edge (24)
	    (23) edge (25)
	    (24) edge (25)
	    (25) edge (26)
	    (26) edge (27)
	    (26) edge (28)
	    (27) edge (28)
	    (27) edge (29)
	    (28) edge (29)
	     (29) edge (30)
	     (31) edge (30)
	     (32) edge (30)
	     (31) edge (33)
	     (32) edge (34);
	      \path[densely dashed]
	     (31) edge (35)
	     (36) edge (32)
	     (31) edge (32);
\end{tikzpicture}
\caption{Main components of an eigenvector of first type for  $G\in \Fc$}
\label{fig:a}
\end{figure}
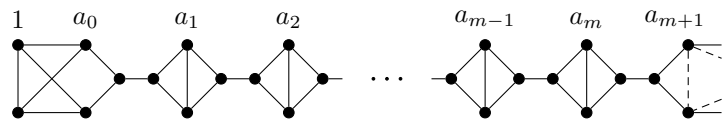 

The repetitive structure of the graphs in $\Fc$ induces a recurrence relation on the components of  eigenvectors of first type. This allows us to derive an elegant description of the components in the following lemma. The lemma provides a key tool to decide whether an eigenvalue can belong to a specific set.

\begin{lemma}\label{lem:Recursion}
Let $G\in \Fc$ and $\lambda\not\in\{0,\pm1,\pm\sqrt5,3\}$ be an eigenvalue of $G$ with an eigenvector $\x$ of the first type. Then the main components of $\x$ are given by
\begin{equation}\label{eq:PQRS}
 a_j=\frac{\lambda-1}{2^{j+2}}\left((P+1)\big(k-\sqrt{k^2-4}\big)^j-(P-1)\big(k+\sqrt{k^2-4}\big)^j\right),~ j=0,1,\ldots,m+1,
\end{equation}
where
\begin{equation}\label{eq:P}
P=\frac{-\la^3+k+7\la+4}{\sqrt{k^2-4}}\quad\hbox{and}\quad k=\frac12(\lambda^3-\lambda^2-5\lambda+1).
\end{equation}
\end{lemma}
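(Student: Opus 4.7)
The plan is to exploit the block-repetitive structure of $G$ to derive a uniform second-order linear recurrence for the main components, and then solve it.

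First, I would normalize. Since any first-type eigenvector whose component on the terminal pair $\{1,2\}$ vanishes can, by successive application of the eigen-equation marching across the graph, be forced to be identically zero, we may assume its component on $\{1,2\}$ equals $1$. Applying the eigen-equation at a vertex of $\{1,2\}$, whose three neighbors are its twin and the two vertices of $\{3,4\}$ with value $a_0$, yields $\la = 1 + 2 a_0$, hence $a_0 = (\la - 1)/2$; this matches \eqref{eq:PQRS} at $j = 0$.

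The heart of the argument is the derivation of the recurrence. For each middle block, let $u_j$ and $v_j$ denote the values at its left and right cut vertices and $a_j$ the value on its inner cell. The eigen-equations at one inner vertex and at the two cut vertices read
\[
(\la-1)\,a_j = u_j + v_j, \qquad \la u_j = 2a_j + v_{j-1}, \qquad \la v_j = 2a_j + u_{j+1}.
\]
A key observation is that the cut vertex of the left (resp.\ right) end block satisfies the same form of equation, its three neighbors being the two vertices of the interior pair of value $a_0$ (resp.\ $a_{m+1}$) together with the adjacent middle-block cut vertex. Thus the relations above are valid uniformly for $j = 1, \ldots, m$ once $v_0$ and $u_{m+1}$ are interpreted as the end-block cut vertices. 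Coupling $\la u_{j+1} = 2a_{j+1} + v_j$ with $\la v_j = 2a_j + u_{j+1}$ and solving for $v_j$ (which requires $\la^2 \neq 1$) gives $v_j = 2(\la a_j + a_{j+1})/(\la^2 - 1)$, and symmetrically for $u_j$. Plugging these into $(\la-1) a_j = u_j + v_j$ and simplifying produces
\[
a_{j+1} + a_{j-1} = k\, a_j, \qquad j = 1, \ldots, m, \qquad k = \tfrac{1}{2}(\la^3 - \la^2 - 5\la + 1).
\]
A short direct propagation through the left end block then yields $a_1 = (\la-1)(\la^3 - 7\la - 4)/4$, so together with $a_0$ we have the two initial conditions needed.

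Finally, I would solve the recurrence using its characteristic equation $x^2 - kx + 1 = 0$, whose roots $\alpha, \beta = (k \mp \sqrt{k^2-4})/2$ are distinct exactly when $k \neq \pm 2$. The factorizations $\la^3 - \la^2 - 5\la - 3 = (\la-3)(\la+1)^2$ and $\la^3 - \la^2 - 5\la + 5 = (\la-1)(\la^2-5)$ identify $k = \pm 2$ with $\la \in \{-1, 1, 3, \pm\sqrt 5\}$, accounting for the remaining exclusions. Writing the general solution as $a_j = A\alpha^j + B\beta^j$ and parametrizing $A = (\la-1)(P+1)/4$, $B = -(\la-1)(P-1)/4$ makes $a_0 = A+B$ automatic, while matching $a_1 = A\alpha + B\beta$ pins down $P$ as in \eqref{eq:P}. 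Substituting $\alpha^j = 2^{-j}(k - \sqrt{k^2-4})^j$ and $\beta^j = 2^{-j}(k + \sqrt{k^2-4})^j$ reproduces \eqref{eq:PQRS}. The main subtlety is not any single calculation but the check that the recurrence, a priori an interior identity, extends cleanly to the boundary indices $j = 1$ and $j = m$; this works out precisely because the end-block cut vertex plays the role of a middle-block cut vertex in the eigen-equations.
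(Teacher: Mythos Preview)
Your proof is correct and follows essentially the same approach as the paper: both derive the recurrence $a_{j+1}+a_{j-1}=ka_j$ by eliminating the cut-vertex values from the eigen-equations across consecutive blocks, then solve it via the characteristic equation $x^2-kx+1=0$ with the same initial data $a_0=(\la-1)/2$ and $a_1=(\la^4-\la^3-7\la^2+3\la+4)/4$. Your explicit check that the recurrence extends to the boundary indices $j=1$ and $j=m$ is a welcome point of care that the paper leaves implicit.
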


\begin{proof}
 Consider three consecutive middle blocks of $G$, as depicted in Figure~\ref{fig:3middle},  in which the  labels of the vertices indicate the components of $\x$.
   \begin{figure}[h!]
 \centering
\begin{tikzpicture}[scale=1]
	\vertex[fill] (2) at (0,0) [] {};
	\vertex[fill] (3) at (.5,-.5) [] {};
	\vertex[fill] (4) at (.5,.5) [label=above:\scriptsize{$a_{i-2}$}] {};
    \vertex[fill] (5) at (1,0) [label=above:\scriptsize{$\ \ b_{i-2}$}] {};
    \vertex[fill] (6) at (2,0) [label=above:\scriptsize{$c_{i-2}\ \ $}] {};
    \vertex[fill] (7) at (2.5,.5) [label=above:\scriptsize{$a_{i-1}$}] {};
    \vertex[fill] (8) at (2.5,-.5) [] {};
    \vertex[fill] (9) at (3,0) [label=above:\scriptsize{$\ \ b_{i-1}$}] {};
    \vertex[fill] (10) at (4,0) [label=above:\scriptsize{$c_{i-1}\ \ $}] {};
    \vertex[fill] (11) at (4.5,.5) [label=above:\scriptsize{$a_{i}$}] {};
    \vertex[fill] (12) at (4.5,-.5) [] {};
    \vertex[fill] (13) at (5,0) [] {};
	\path
		(4) edge (3)
		(2) edge (3)
        (2) edge (4)
		(3) edge (5)
	    (4) edge (5)
	    (5) edge (6)
	    (6) edge (7)
	    (6) edge (8)
	    (7) edge (8)
	    (7) edge (9)
	    (8) edge (9)
	    (9) edge (10)
	    (10) edge (11)
	    (10) edge (12)
	    (11) edge (12)
	    (11) edge (13)
	    (12) edge (13);
\end{tikzpicture}
\caption{Three middle blocks of  $G\in \Fc$ and  the components of $\x$} \label{fig:3middle}
\end{figure}

 Using the eigen-equation, we obtain
$$\begin{array}{rrr}
\lambda b_{i-2}-c_{i-2}-2a_{i-2}=0,& \lambda c_{i-2}-b_{i-2}-2a_{i-1}=0,& \lambda a_{i-1}-c_{i-2}-a_{i-1}-b_{i-1}=0, \\
\lambda b_{i-1}-2a_{i-1}-c_{i-1}=0,& \lambda  c_{i-1}-b_{i-1}-2a_i=0.&
\end{array}$$
 From these equations we can  write $a_{i-1}$ in terms of $a_{i-2}$ and  $a_{i}$ as:
$$a_{i-1}=\frac{2(a_{i-2}+a_{i})}{\lambda^3-\lambda^2-5\lambda+1}.$$
So we come up with the following recurrence relation:
\begin{equation}\label{eq:Recursion}
	\begin{array}{l}
		a_i=k a_{i-1}-a_{i-2},\quad i=2,\ldots,m+1, \\
		a_0=\frac{1}{2}(\lambda-1),~ a_1=\frac{1}{4}(\lambda^4-\lambda^3-7\lambda^2+3\lambda+4),
	\end{array}
\end{equation}
where the initial conditions are obtained using the eigen-equation on the first four cells of $G$.
To solve the recurrence relation \eqref{eq:Recursion}, we find the zeros of its characteristic equation
 $x^2-kx+1=0$, that is $$R=(k-\sqrt{k^2-4})/2,\quad S=(k+\sqrt{k^2-4})/2.$$
Since $\lambda\not\in\{\pm1,\pm\sqrt5,3\}$, we have  $k^2-4=(\lambda-1)(\la-3)
		(\la^2-5)(\la+1)^2\ne0$. So $R,S$ are distict (reals or complex numbers). 
 It turns out that the solution of \eqref{eq:Recursion} is
\begin{equation}\label{eq:PQRS2}
 a_j=\frac{\lambda-1}{4}\left((P+1)R^j-(P-1)S^j\right),
\end{equation}
form which the result follows.
\end{proof}

The relation obtained in Lemma~\ref{lem:Recursion} for the main components of eigenvectors of first type and a similar relation for the minimal quartic graphs (which will be settled later) play crucial roles in the proof of our main results. 

We are now prepared to prove that eigenvalues of minimal cubic graphs  except possibly for $0,-1$ are simple.
\begin{theorem}\label{thm:simplecubic}
 Let $G\in \Fc$ be of order $n\ge10$.
If  $n\equiv2\pmod4$, then all the eigenvalues of $G$ except $-1$ and $0$ are simple. If $n\equiv 0\pmod4$, then the only non-simple eigenvalue of $G$ is $-1$.
\end{theorem}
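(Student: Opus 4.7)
The plan is to decompose any eigenvector of $G$ using the equitable partition $\Pi$ of Lemma~\ref{lem:type} into a first-type component (constant on the cells) and a second-type component (summing to zero on each cell), and to control the two components separately.

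The first step is to show that the first-type eigenspace of any eigenvalue is at most one-dimensional. By Lemma~\ref{lem:type}, the first-type eigenvalues are exactly the eigenvalues of the quotient matrix $Q$ of $\Pi$. Since the quotient graph of $\Pi$ is a connected path and the cell-to-cell edge counts are all nonzero, the standard diagonal rescaling by $D^{1/2}$ (with $D$ the diagonal matrix of cell sizes) turns $Q$ into an irreducible symmetric tridiagonal Jacobi matrix, whose eigenvalues are automatically simple. Equivalently, Lemma~\ref{lem:Recursion} shows (after normalizing the first main component to $1$) that the whole first-type eigenvector is uniquely determined by $\la$ via the initial data $a_0 = (\la-1)/2$, $a_1=(\la^4-\la^3-7\la^2+3\la+4)/4$ and the recurrence $a_i = ka_{i-1}-a_{i-2}$; for the finitely many excluded values $\{0,\pm1,\pm\sqrt5,3\}$ a direct inspection of the eigen-equation yields the same conclusion.

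Combining this uniqueness with Lemma~\ref{lem:2ndtypeFc}, the case $n \equiv 2 \pmod 4$ is immediate: any $\la \notin \{0,-1\}$ admits no second-type eigenvectors, so its eigenspace coincides with the (at most one-dimensional) first-type eigenspace, and hence $\la$ is simple.

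For $n \equiv 0 \pmod 4$ it remains to show that each of $0$ and $(-1\pm\sqrt5)/2$ is simple. Inspection of the proof of Lemma~\ref{lem:2ndtypeFc} shows that the second-type eigenspace at each of these three values is exactly one-dimensional: $\la=0$ is realized only by the left end block of type Fig.~\ref{fig:thcubicb}, and $\la = (-1\pm\sqrt5)/2$ only by the right end block of type Fig.~\ref{fig:thcubicc}, each furnishing a single free parameter with the eigenvector supported on that end block. It therefore suffices to check that none of these three values is a first-type eigenvalue. For $\la=(-1\pm\sqrt5)/2$ the identity $\la^2+\la-1=0$ simplifies the parameters of Lemma~\ref{lem:Recursion} to $k=\mp\sqrt5/2$ and $k^2-4=-11/4$, so the $a_j$ become explicitly oscillatory; substituting these into the eigen-equations at the vertices of the right end block produces an inconsistent linear system in the last two main components. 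For $\la=0$ one runs the recurrence $a_i=\tfrac12 a_{i-1}-a_{i-2}$ obtained from \eqref{eq:Recursion} with $a_0=-\tfrac12$, $a_1=1$, and again checks that the boundary equations at the right end block fail. This finite but nontrivial algebraic incompatibility check is the main technical obstacle of the proof; once carried out, each of $0$ and $(-1\pm\sqrt5)/2$ has at most a one-dimensional eigenspace (coming purely from the second type), and the theorem follows.
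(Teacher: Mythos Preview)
Your approach coincides with the paper's: reduce to simplicity of the quotient matrix (tridiagonal with nonzero subdiagonal, hence Jacobi), invoke Lemma~\ref{lem:2ndtypeFc} to finish the $n\equiv2\pmod4$ case, and for $n\equiv0\pmod4$ verify separately that $0$ and $(-1\pm\sqrt5)/2$ are not first-type eigenvalues while their second-type eigenspaces are one-dimensional.

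The gap is precisely the step you yourself label ``the main technical obstacle'': you assert that the boundary equations at the right end block are inconsistent, but you do not verify it, and this is not a routine linear-algebra check because it must hold uniformly in $m$ (i.e., for \emph{every} $n\equiv0\pmod4$). The paper carries this out for $\la=(-1+\sqrt5)/2$ as follows. The eigen-equation on the last eight cells forces $a_{m-1}=a_m$. With $k=-\sqrt5/2$ the roots $R,S$ of $x^2-kx+1=0$ are complex conjugates of modulus $1$; writing $q=(1+P)/|1+P|$, the closed form \eqref{eq:PQRS2} together with $a_{m-1}=a_m$ collapses to $qR^m+\bar q\bar R^m=qR^{m-1}+\bar q\bar R^{m-1}$, equivalently $R^{2m-1}=\bar q^2$. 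Since $|R|=1$, this is purely a condition on arguments, and one checks in polar coordinates (using the explicit values $R=-(\sqrt5+i\sqrt{11})/4$ and $P=-i(4\sqrt5+5)/\sqrt{11}$) that no integer $m$ satisfies it. The case $\la=0$ is handled the same way, with the right end block yielding $a_m=-\tfrac{10}{11}a_{m-1}$ instead. Without an argument of this type---one that genuinely exploits the arithmetic of $R$ and $P$ rather than a fixed finite linear system---your claimed inconsistency remains unproved.
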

\begin{proof}
 Let $Q$ be the quotient matrix of $G$ (with respect to $\Pi$).
We first show that any  eigenvalue $\la$  of $Q$ is  simple. One can easily see that  due to the path-like structure of $G$, $Q$ is a  tridiagonal matrix with non-zero subdiagonal entries. 
The matrix $Q$ is self-adjoint as an operator in the weighted $L^2$ space; the correct weight is the size of the $j$-th cell.
Thus $Q$  is matrix-similar to
a symmetric matrix and so it is a  diagonalizable tridiagonal matrix with non-zero subdiagonal entries.
The submatrix obtained from deleting the first row and the last column of $Q-\lambda I$ is
an upper triangonal matrix whose diagonal entries are all non-zero and so it is non-singular. Therefore, the
nullity of $Q-\lambda I$ is $1$. It follows then that the geometric multiplicity of $\la$ is $1$ and because $Q$ is diagonalizable, the algebraic multiplicity of $\lambda$ is $1$ as well. Therefore, $\lambda$ is a simple eigenvalue of $Q$.

For $n\equiv2\pmod4$, by Lemma~\ref{lem:2ndtypeFc}, the only eigenvalues with second type eigenvectors are $0$ and $-1$. So if $\la\ne0,-1$, then $\la$ is an eigenvalues of $Q$ 
and by the above argument has multiplicity 1. 

Next, assume that $n\equiv 0\pmod4$. We show that neither $0$ nor $(-1\pm\sqrt5)/2$ can be an eigenvalue of $G$ with eigenvectors  of the first type (and so not an eigenvalue of $Q$). First consider $(-1\pm\sqrt5)/2$. It is enough to argue only for $(-1+\sqrt5)/2$ (since if an algebraic integer is an eigenvalue of $Q$, its {\em algebraic conjugate} is also an eigenvalue of $Q$).
Now, for a contradiction, let  $(-1+\sqrt5)/2$  be an  eigenvalue  with eigenvector  $\x$ of the first type.
By Lemma~\ref{lem:Recursion}, the main components of $\x$  satisfies \eqref{eq:PQRS} or equivalently  \eqref{eq:PQRS2}.
On the other hand, by applying the eigen-equation on the last eight cells of $G$, we observe that $a_{m-1}=a_m=x_n$.
With the notation of \eqref{eq:PQRS2} for $\la=(-1+\sqrt5)/2$, we have
$$k=-\frac{\sqrt5}2,~P=-\frac{i}{\sqrt{11}}\left(4\sqrt{5}+5\right), ~R= -\frac{\sqrt5+i\sqrt{11}}4.$$
Note that here $S=\bar R$, the complex conjugate of $R$.
Let $q:=(1+P)/|1+P|$. Then combining \eqref{eq:PQRS2} and $a_{m-1}=a_m$ results in
$$qR^m+\bar{q}\bar{R}^m=qR^{m-1}+\bar{q}\bar{R}^{m-1},$$
which implies that ${R}^{2m-1}=\bar q^2$. 
 It is seen that this cannot hold, for example by transforming into polar coordinates, which gives a contradiction.
 
We can argue similarly for $\la=0$ where by applying  Lemma~\ref{lem:Recursion}, we obtain
$$a_j=\frac{-5+3i\sqrt{15}}{20}\left(\frac{1-i\sqrt{15}}{4}\right)^j-\frac{5+3i\sqrt{15}}{20}\left(\frac{1+i\sqrt{15}}{4}\right)^j,
\quad j=1, \ldots, m.$$
On the other hand by applying the eigen-equation on the right end block, we see that $a_m=-\frac{10}{11}a_{m-1}$. Straightforward computation shows that this leads to a contradiction.
 Therefore,  $0$ and $(-1\pm\sqrt5)/2$  cannot be eigenvalues of $Q$.

Now, if $\lambda\notin\left\{ 0, -1 , (-1\pm\sqrt5)/2\right\}$, then $\x$ is of the first type. So the multiplicity of $\la$ for $G$ is the same as its multiplicity for $Q$.
 Thus, $\la$ is a simple eigenvalue of $G$. The proof of Lemma~\ref{lem:2ndtypeFc} shows that
the eigenvalues $(-1\pm\sqrt{5})/2$  have eigenvectors of second type which come from the end block of Figure~\ref{fig:thcubicc}.
Since there is at most one such end block in $G$, the multiplicity of $(-1\pm\sqrt{5})/2$
is at most $1$. Again  by the proof of Lemma~\ref{lem:2ndtypeFc}, the eigenvalue $0$  has to have a second type eigenvector coming from the end block of Figure~\ref{fig:thcubicb}. Thus when $n\equiv0 \pmod4$, $0$ is a simple eigenvalue, completing the proof.
\end{proof}

\subsection{Minimal quartic graphs}

Similarly to the minimal cubic graphs, members of $\Fq^*$ have an equitable partition with cells of size $1$ or $2$ consisting of the vertices
drawn vertically above each other in the building blocks of Figure~\ref{fig:thm:quartic} (with the exceptions that the first cell in the blocks $D_1$ and $D_5$ has three vertices and the  first cell in the blocks $D_2$ and $D_4$ has four vertices). We denote this equitable partition by $\Pi$ in this subsection. 	Let $\lambda$ be an eigenvalue of $G$ with eigenvector $\x$. In view of Lemma~\ref{lem:type}, $\x$ is either constant on the cells of $\Pi$ or $\x$ is orthogonal to each cell of $\Pi$.
Accordingly, we call $\x$ of the {\em first} or {\em second type}, respectively.

\begin{lemma}\label{lem:1stTypeQua}
For any graphs in $\Fq^*$, its quotient matrix has only simple eigenvalues.
\end{lemma}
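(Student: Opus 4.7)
The plan is to follow exactly the same strategy as used in the proof of Theorem~\ref{thm:simplecubic} for the cubic case, adapted to the quartic equitable partition $\Pi$ just introduced. First, I would verify by direct inspection of the building blocks $M$ and $D_1,\ldots,D_5$ in Figure~\ref{fig:thm:quartic} that the cells of $\Pi$ admit a linear ordering $C_1,C_2,\ldots,C_m$ compatible with the path-like structure (going from left to right along Figure~\ref{fig:path-like}), and that every edge of $G$ joins either two vertices of the same $C_i$ or a vertex of $C_i$ to a vertex of $C_{i+1}$. Consequently, the quotient matrix $Q$ of $\Pi$ is tridiagonal.

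Next, I would check that the subdiagonal entries $q_{i+1,i}$ are all strictly positive: for every pair of consecutive cells in any of the six building blocks, at least one edge crosses from $C_i$ to $C_{i+1}$, which forces $q_{i+1,i}\ne0$. This is a finite verification, done block by block. Once this is in place, $Q$ is a tridiagonal matrix with nonzero sub- and superdiagonal entries. Although $Q$ itself is not symmetric (cells have varying sizes), it is self-adjoint with respect to the weighted inner product whose $i$-th weight is $|C_i|$, as pointed out after Lemma~\ref{lem:type}. Hence $Q$ is similar to a symmetric matrix and is in particular diagonalizable.

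Fixing an eigenvalue $\lambda$ of $Q$, consider the $(m-1)\times(m-1)$ submatrix of $Q-\lambda I$ obtained by deleting the first row and the last column. Because $Q$ is tridiagonal, this submatrix is upper triangular, and its diagonal entries are precisely the subdiagonal entries $q_{2,1},q_{3,2},\ldots,q_{m,m-1}$ of $Q$, which are all nonzero by the previous step. Therefore this submatrix is nonsingular, so $\mathrm{rank}(Q-\lambda I)\ge m-1$ and $\lambda$ has geometric multiplicity exactly $1$. Combined with diagonalizability, this forces algebraic multiplicity $1$ as well, so every eigenvalue of $Q$ is simple.

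The only potentially delicate step is the tridiagonality check for the end blocks $D_2$ and $D_4$, which contain cells of size $3$ or $4$ and where, a priori, one might worry about edges jumping over a cell. From the drawings in Figure~\ref{fig:thm:quartic}, however, all edges within these end blocks connect either vertices of the same cell or vertices of consecutive cells, so no such jump occurs; this is the one place where the explicit description of $\Fq^*$ is really used. Once this is confirmed, the argument above carries over verbatim from the cubic setting.
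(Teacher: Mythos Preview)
Your argument has a genuine gap: the quotient matrix $Q$ is \emph{not} tridiagonal when an end block of $G$ is $D_3$. Look at the cells of $D_3$ in Figure~\ref{fig:thm:quartic}: reading left to right they are $\{r_1,r_2\}$, $\{r_3,r_4\}$, $\{r_5\}$, $\{r_6,r_7\}$, $\{r_8\}$. The edges $r_1r_5$ and $r_2r_5$ connect the first cell directly to the third cell, and the edges $r_3r_6$ and $r_4r_7$ connect the second cell directly to the fourth cell. Both of these jump over a cell, so $Q$ acquires nonzero entries two steps off the diagonal and the submatrix obtained by deleting the first row and last column of $Q-\lambda I$ is no longer upper triangular. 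Your ``only delicate step'' paragraph flags $D_2$ and $D_4$ but overlooks precisely the block that breaks tridiagonality.

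The paper's proof proceeds exactly as you describe for graphs without a $D_3$ end block, but then handles the $D_3$ case separately. There one first needs to rule out $\lambda=1$ (done by propagating the eigen-equation from the $D_3$ end and reaching a contradiction at the other end block), and then, for $\lambda\neq1$, one performs explicit row and column operations on $Q-\lambda I$ to reduce it to a matrix whose submatrix (delete first row, last column) is upper triangular with nonzero diagonal; the diagonal entries involve $1-\lambda$ and $\lambda^2-\lambda+2$, which is why $\lambda\neq1$ had to be established beforehand. This extra work is the missing piece in your proposal.
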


\begin{proof}
Let $G\in\Fq^*$ and $\la$ be an eigenvalue for the  quotient matrix $Q$ of $G$ (with respect to $\Pi$). So $\la$ gives rise to an eigenvalue of $G$ with an eigenvector $\x$ of the first type. 
First, suppose that $G$ does not contain $D_3$ or its mirror image.
In this case, $Q$ is a  tridiagonal matrix with non-zero subdiagonal entries.
Further, $Q$ is diagonalizable, and so similarly to the proof of Theorem~\ref{thm:simplecubic}, all the eigenvalues of $Q$ are simple. Next, assume that one or both  end blocks of $G$ are $D_3$.
In this case, $Q$ is  not  tridiagonal anymore. However, we show that the nullity of $Q-\lambda I$ is $1$.
\begin{figure}
	\tikzstyle{vertex}=[circle, draw, inner sep=0pt, minimum size=3.5pt]
	\centering
	\subfloat[]{\begin{tikzpicture}[scale=0.9]
			\vertex[fill] (r1) at (0,0) [] {};
			\vertex[fill] (r2) at (0,.8) [] {};
			\vertex[fill] (r3) at (.8,1) [label=above:\scriptsize{$x_3$}] {};
			\vertex[fill] (r4) at (.8,-.2) [] {};
			\vertex[fill] (r5) at (1.3,.4) [] {};
			\vertex[fill] (r6) at (2,.8) [label=above:\scriptsize{$x_6$}]{};
			\vertex[fill] (r7) at (2,0) [] {};
			\vertex[fill] (r8) at (2.7,.4) [label=above:\scriptsize{$x_8$}] {};
			\vertex[fill] (r9) at (3.4,.8) [label=above:\scriptsize{$x_9$}] {};
			\vertex[fill] (r10) at (3.4,0) [] {};
			\vertex[fill] (r11) at (4.4,.8) [label=above:\scriptsize{$x_{11}$}] {};
			\vertex[fill] (r12) at (4.4,0) [] {};
			\vertex[fill] (r13) at (5.1,.4) [label=above:\scriptsize{$x_{13}$}] {};
			\vertex[fill] (r14) at (5.8,.8) [label=above:\scriptsize{$x_{14}$}] {};
			\vertex[fill] (r15) at (5.8,0) [] {};
			\vertex[fill] (r16) at (6.8,.8) [label=above:\scriptsize{$x_{16}$}] {};
			\vertex[fill] (r17) at (6.8,0) [] {};
			\vertex[fill] (r18) at (7.5,.4) [label=above:\scriptsize{$x_{18}$}] {};
			\vertex[fill] (r19) at (8.2,.8) [label=above:\scriptsize{$x_{19}$}] {};
			\vertex[fill] (r20) at (8.2,0) [] {};
			\vertex[fill] (r21) at (9.2,.8) [label=above:\scriptsize{$x_{21}$}] {};
			\vertex[fill] (r22) at (9.2,0) [] {};
			\vertex[fill] (r23) at (9.9,.4) [label=above:\scriptsize{$x_{23}$}] {};
			\vertex[fill] (r24) at (10.6,.8) [] {};
			\vertex[fill] (r25) at (10.6,0) [] {};
			\vertex[fill] (r26) at (11.6,.8) [] {};
			\vertex[fill] (r27) at (11.6,0) [] {};
			\vertex[fill] (r28) at (12.3,.4) [] {};
			\tikzstyle{vertex}=[circle, draw, inner sep=0pt, minimum size=0pt]
			\vertex (s) at (12.5,.6)[] {};
			\vertex (ss) at (12.5,.2) [] {};
			\vertex[] () at (1.3,.36) [label=above:\scriptsize{$x_5$}] {};
			\path
			(r5) edge (r2)
			(r1) edge (r2)
			(r1) edge (r5)
			(r1) edge (r3)
			(r1) edge (r4)
			(r2) edge (r3)
			(r2) edge (r4)
			(r3) edge (r6)
			(r4) edge (r3)
			(r4) edge (r7)
			(r5) edge (r7)
			(r5) edge (r6)			
			(r6) edge (r7)
			(r7) edge (r8)
			(r6) edge (r8)
			(r8) edge (r9)
			(r8) edge (r10)
			(r9) edge (r10)
			(r9) edge (r11)
			(r9) edge (r12)
			(r10) edge (r11)
			(r10) edge (r12)
			(r11) edge (r12)
			(r11) edge (r13)
			(r12) edge (r13)	
			(r13) edge (r14)
			(r13) edge (r15)
			(r14) edge (r15)
			(r14) edge (r16)
			(r14) edge (r17)
			(r15) edge (r16)
			(r15) edge (r17)
			(r16) edge (r17)
			(r18) edge (r17)
			(r16) edge (r18)
			(r18) edge (r19)
			(r18) edge (r20)
			(r19) edge (r20)
			(r19) edge (r21)
			(r19) edge (r22)
			(r20) edge (r21)
			(r20) edge (r22)
			(r21) edge (r23)
			(r21) edge (r22)
			(r22) edge (r23)
			(r23) edge (r24)
			(r23) edge (r25)
			(r24) edge (r25)
			(r24) edge (r26)
			(r24) edge (r27)
			(r25) edge (r26)
			(r25) edge (r27)
			(r26) edge (r27)
			(r26) edge (r28)
			(r27) edge (r28)
			(s) edge (r28)
			(ss) edge (r28);
		\end{tikzpicture}\label{fig:D31}}
	
	\subfloat[]{\begin{tikzpicture}[scale=0.9]
			\vertex[fill] (r1) at (0,0) [] {};
			\vertex[fill] (r2) at (0,.8) [label=above:\scriptsize{$x_1$}] {};
			\vertex[fill] (r3) at (.8,1) [label=above:\scriptsize{$x_3$}] {};
			\vertex[fill] (r4) at (.8,-.2) [] {};
			\vertex[fill] (r5) at (1.3,.4) [] {};
			\vertex[fill] (r6) at (2,.8) [label=above:\scriptsize{$x_6$}]{};
			\vertex[fill] (r7) at (2,0) [] {};
			\vertex[fill] (r8) at (2.7,.4) [label=above:\scriptsize{$x_8$}] {};
			\vertex[fill] (r9) at (3.4,.8) [label=above:\scriptsize{$x_9$}] {};
			\vertex[fill] (r10) at (3.4,0) [] {};
			\vertex[fill] (r11) at (4.4,.8) [label=above:\scriptsize{$x_{11}$}] {};
			\vertex[fill] (r12) at (4.4,0) [] {};
			\vertex[fill] (r13) at (5.1,.4) [label=above:\scriptsize{$x_{13}$}] {};
			\vertex[fill] (r14) at (5.8,.8) [label=above:\scriptsize{$x_{14}$}] {};
			\vertex[fill] (r15) at (5.8,0) [] {};
			\vertex[fill] (r16) at (6.8,.8) [label=above:\scriptsize{$x_{16}$}] {};
			\vertex[fill] (r17) at (6.8,0) [] {};
			\vertex[fill] (r18) at (7.5,.4) [label=above:\scriptsize{$x_{18}$}] {};
			\vertex[fill] (r19) at (8.2,.8) [label=above:\scriptsize{$x_{19}$}] {};
			\vertex[fill] (r20) at (8.2,0) [] {};
			\vertex[fill] (r21) at (9.2,.8) [label=above:\scriptsize{$x_{21}$}] {};
			\vertex[fill] (r22) at (9.2,0) [] {};
			\vertex[fill] (r23) at (9.9,.4) [label=above:\scriptsize{$x_{8}$}] {};
			\vertex[fill] (r24) at (10.6,.8) [label=above:\scriptsize{$x_9$}] {};
			\vertex[fill] (r25) at (10.6,0) [] {};
			\vertex[fill] (r26) at (11.6,.8) [label=above:\scriptsize{$x_{11}$}] {};
			\vertex[fill] (r27) at (11.6,0) [] {};
			\vertex[fill] (r28) at (12.3,.4) [label=above:\scriptsize{$x_{13}$}] {};
			\vertex[fill] (r29) at (13,.8) [label=above:\scriptsize{$x_{14}$}] {};
			\vertex[fill] (r30) at (13,0) [] {};
			\vertex[fill] (r31) at (14,.8) [label=above:\scriptsize{$x_{16}$}] {};
			\vertex[fill] (r32) at (14,0) [] {};
			\vertex[fill] (r33) at (14.7,.4) [label=above:\scriptsize{$x_{18}$}] {};
			\vertex[fill] (r34) at (15.4,.8) [label=above:\scriptsize{$x_{19}$}] {};
			\vertex[fill] (r35) at (15.4,0) [] {};
			\vertex[fill] (r36) at (16.4,.8) [label=above:\scriptsize{$x_{21}$}] {};
			\vertex[fill] (r37) at (16.4,0) [] {};
			\vertex[fill] (r38) at (17.1,.4) [label=above:\scriptsize{$x_{8}$}] {};
			\tikzstyle{vertex}=[circle, draw, inner sep=0pt, minimum size=0pt]
			\vertex (s) at (17.4,.6)[] {};
			\vertex (ss) at (17.4,.2) [] {};
			\vertex[] () at (1.3,.36) [label=above:\scriptsize{$x_5$}] {};
			\path
			(r5) edge (r2)
			(r1) edge (r2)
			(r1) edge (r5)
			(r1) edge (r3)
			(r1) edge (r4)
			(r2) edge (r3)
			(r2) edge (r4)
			(r3) edge (r6)
			(r4) edge (r3)
			(r4) edge (r7)
			(r5) edge (r7)
			(r5) edge (r6)			
			(r6) edge (r7)
			(r7) edge (r8)
			(r6) edge (r8)
			(r8) edge (r9)
			(r8) edge (r10)
			(r9) edge (r10)
			(r9) edge (r11)
			(r9) edge (r12)
			(r10) edge (r11)
			(r10) edge (r12)
			(r11) edge (r12)
			(r11) edge (r13)
			(r12) edge (r13)	
			(r13) edge (r14)
			(r13) edge (r15)
			(r14) edge (r15)
			(r14) edge (r16)
			(r14) edge (r17)
			(r15) edge (r16)
			(r15) edge (r17)
			(r16) edge (r17)
			(r18) edge (r17)
			(r16) edge (r18)
			(r18) edge (r19)
			(r18) edge (r20)
			(r19) edge (r20)
			(r19) edge (r21)
			(r19) edge (r22)
			(r20) edge (r21)
			(r20) edge (r22)
			(r21) edge (r23)
			(r21) edge (r22)
			(r22) edge (r23)
			(r23) edge (r24)
			(r23) edge (r25)
			(r24) edge (r25)
			(r24) edge (r26)
			(r24) edge (r27)
			(r25) edge (r26)
			(r25) edge (r27)
			(r26) edge (r27)
			(r26) edge (r28)
			(r27) edge (r28)
			(r28) edge (r29)
			(r28) edge (r30)
			(r29) edge (r30)
			(r29) edge (r31)
			(r29) edge (r32)
			(r30) edge (r31)
			(r30) edge (r32)
			(r31) edge (r32)
			(r31) edge (r33)
			(r32) edge (r33)
			(r34) edge (r33)
			(r35) edge (r33)
			(r34) edge (r35)
			(r34) edge (r36)
			(r34) edge (r37)
			(r35) edge (r36)
			(r35) edge (r37)
			(r36) edge (r37)
			(r36) edge (r38)
			(r37) edge (r38)
			(s) edge (r38)
			(ss) edge (r38)
			;
		\end{tikzpicture}\label{fig:D32}}
	\caption{The graph $G$ with at least one block $D_3$ and the components of an eigenvector of $\la=1$}
\end{figure}
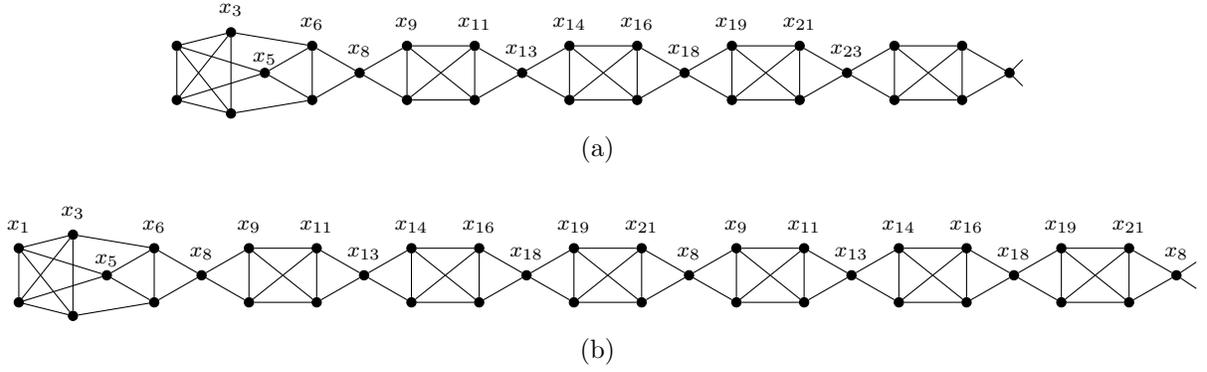

We first show that $\lambda\ne1$. For a contradiction, let $\la=1$. By the eigen-equation,  we can write the components of an eigenvector of $\la$ (as shown in Figure~\ref{fig:D31}) in terms of $x_1$.
It can be seen that  $x_6=x_{21}=-2x_1$ and $x_8=x_{23}=x_1$. Hence we must have $x_9=x_{24}$, $x_{11}=x_{26},$ and so on.
In this way, the components of $\x$ on the first three middle blocks of $G$ are repeated periodically, see Figure~\ref{fig:D32}.
In particular, the components of $\x$ on the cut vertices of $G$ are obtained in terms of $x_1$, as indicated in Figure~\ref{fig:D33}. Now suppose that the right end block of $G$ is $D_1$, see Figure~\ref{fig:D34}.
Since $x_{n-10}$ and $x_{n-5}$ are the components of two consecutive cut vertices, according to Figure~\ref{fig:D33}, one of the following occurs:
$$x_{n-5}=-5x_{n-10},\quad x_{n-5}=-\frac{4}{5}x_{n-10},\quad\hbox{or}\quad x_{n-5}=\frac{1}{4}x_{n-10}.$$
Again, by the eigen-equation, on the vertices of Figure~\ref{fig:D34}, we can write these components in terms of $x_n$.
It follows that $x_{n-10}=\frac{5}{2}x_n$ and $x_{n-5}=-\frac{7}{2}x_n$. So $x_{n-5}=-\frac{7}{5}x_{n-10}$, a contradiction. (Note that $x_1,x_n\neq 0$ since otherwise by the eigen-equation $\x=\bf{0}$.) A similar argument works if the right end block is one of $D_2, \ldots, D_5$.

In the following, we assume that both end blocks of $G$ are $D_3$. If $G$ has one $D_3$, it will be enough to use the same argument only on the submatrix corresponding to $D_3$.

\begin{figure}
	\tikzstyle{vertex}=[circle, draw, inner sep=0pt, minimum size=3.5pt]
	\centering
	\subfloat[]{\begin{tikzpicture}[scale=0.9]
			\vertex[fill] (r1) at (0,0) [] {};
			\vertex[fill] (r2) at (0,.8) [label=above:\scriptsize{$x_1$}] {};
			\vertex[fill] (r3) at (.8,1) [] {};
			\vertex[fill] (r4) at (.8,-.2) [] {};
			\vertex[fill] (r5) at (1.3,.4) [] {};
			\vertex[fill] (r6) at (2,.8) []{};
			\vertex[fill] (r7) at (2,0) [] {};
			\vertex[fill] (r8) at (2.7,.4) [label=above:\scriptsize{$x_1$}] {};
			\vertex[fill] (r9) at (3.4,.8) [] {};
			\vertex[fill] (r10) at (3.4,0) [] {};
			\vertex[fill] (r11) at (4.4,.8) [] {};
			\vertex[fill] (r12) at (4.4,0) [] {};
			\vertex[fill] (r13) at (5.1,.4) [label=above:\scriptsize{$-5x_1$}] {};
			\vertex[fill] (r14) at (5.8,.8) [] {};
			\vertex[fill] (r15) at (5.8,0) [] {};
			\vertex[fill] (r16) at (6.8,.8) [] {};
			\vertex[fill] (r17) at (6.8,0) [] {};
			\vertex[fill] (r18) at (7.5,.4) [label=above:\scriptsize{$4x_1$}] {};
			\vertex[fill] (r19) at (8.2,.8) [] {};
			\vertex[fill] (r20) at (8.2,0) [] {};
			\vertex[fill] (r21) at (9.2,.8) [] {};
			\vertex[fill] (r22) at (9.2,0) [] {};
			\vertex[fill] (r23) at (9.9,.4) [label=above:\scriptsize{$x_1$}] {};
			\vertex[fill] (r24) at (10.6,.8) [] {};
			\vertex[fill] (r25) at (10.6,0) [] {};
			\vertex[fill] (r26) at (11.6,.8) [] {};
			\vertex[fill] (r27) at (11.6,0) [] {};
			\vertex[fill] (r28) at (12.3,.4) [label=above:\scriptsize{$-5x_1$}] {};
			\vertex[fill] (r29) at (13,.8) [] {};
			\vertex[fill] (r30) at (13,0) [] {};
			\vertex[fill] (r31) at (14,.8) [] {};
			\vertex[fill] (r32) at (14,0) [] {};
			\vertex[fill] (r33) at (14.7,.4) [label=above:\scriptsize{$4x_1$}] {};
			\tikzstyle{vertex}=[circle, draw, inner sep=0pt, minimum size=0pt]
			\vertex (s) at (15,.6)[] {};
			\vertex (ss) at (15,.2) [] {};
			\path
			(r5) edge (r2)
			(r1) edge (r2)
			(r1) edge (r5)
			(r1) edge (r3)
			(r1) edge (r4)
			(r2) edge (r3)
			(r2) edge (r4)
			(r3) edge (r6)
			(r4) edge (r3)
			(r4) edge (r7)
			(r5) edge (r7)
			(r5) edge (r6)			
			(r6) edge (r7)
			(r7) edge (r8)
			(r6) edge (r8)
			(r8) edge (r9)
			(r8) edge (r10)
			(r9) edge (r10)
			(r9) edge (r11)
			(r9) edge (r12)
			(r10) edge (r11)
			(r10) edge (r12)
			(r11) edge (r12)
			(r11) edge (r13)
			(r12) edge (r13)	
			(r13) edge (r14)
			(r13) edge (r15)
			(r14) edge (r15)
			(r14) edge (r16)
			(r14) edge (r17)
			(r15) edge (r16)
			(r15) edge (r17)
			(r16) edge (r17)
			(r18) edge (r17)
			(r16) edge (r18)
			(r18) edge (r19)
			(r18) edge (r20)
			(r19) edge (r20)
			(r19) edge (r21)
			(r19) edge (r22)
			(r20) edge (r21)
			(r20) edge (r22)
			(r21) edge (r23)
			(r21) edge (r22)
			(r22) edge (r23)
			(r23) edge (r24)
			(r23) edge (r25)
			(r24) edge (r25)
			(r24) edge (r26)
			(r24) edge (r27)
			(r25) edge (r26)
			(r25) edge (r27)
			(r26) edge (r27)
			(r26) edge (r28)
			(r27) edge (r28)
			(r28) edge (r29)
			(r28) edge (r30)
			(r29) edge (r30)
			(r29) edge (r31)
			(r29) edge (r32)
			(r30) edge (r31)
			(r30) edge (r32)
			(r31) edge (r32)
			(r31) edge (r33)
			(r32) edge (r33)
			(s) edge (r33)
			(ss) edge (r33)	 ;
		\end{tikzpicture}\label{fig:D33}}
	
	\subfloat[]{\begin{tikzpicture}[scale=.95]
			\vertex[fill] (22) at (5.9,.4) [label=above:\scriptsize{$x_{n-10}\ \ \ \ $}] {};
			\vertex[fill] (23) at (6.4,.8) [label=above:\scriptsize{$x_{n-9}$}] {};
			\vertex[fill] (24) at (6.4,0) [] {};
			\vertex[fill] (25) at (7.4,.8) [label=above:\scriptsize{$x_{n-7}$}] {};
			\vertex[fill] (26) at (7.4,0) [] {};
			\vertex[fill] (27) at (8,.4) [] {};
			\vertex[fill] (28) at (8.6,.8) [label=above:\scriptsize{$x_{n-4}$}] {};
			\vertex[fill] (29) at (8.6,0) [] {};
			\vertex[fill] (30) at (9.6,.8) [label=above:\scriptsize{$x_{n-2}$}] {};
			\vertex[fill] (31) at (9.6,0) [] {};
			\vertex[fill] (32) at (10.1,.4) [] {};
			\tikzstyle{vertex}=[circle, draw, inner sep=0pt, minimum size=0pt]
			\vertex[] (s) at (5.6,.2)[] {};
			\vertex[] (ss) at (5.6,.6)[] {};
			\vertex[] () at (8,.6) [label=above:\scriptsize{$x_{n-5}$}] {};
			\path
			(22) edge (23)
			(22) edge (24)
			(23) edge (24)
			(23) edge (25)
			(23) edge (26)
			(24) edge (25)
			(24) edge (26)
			(25) edge (26)
			(25) edge (27)
			(26) edge (27)
			(27) edge (28)
			(27) edge (29)
			(28) edge (30)
			(28) edge (31)
			(28) edge (32)
			(29) edge (30)
			(29) edge (31)
			(29) edge (32)
			(30) edge (31)
			(30) edge (32)
			(31) edge (32)
			(22) edge (s)
			(22) edge (ss) ;
		\end{tikzpicture}\label{fig:D34}}
	\caption{The graph $G$ with at least one block $D_3$ and the components of an eigenvector of $\la=1$ on cut vertices}
\end{figure}
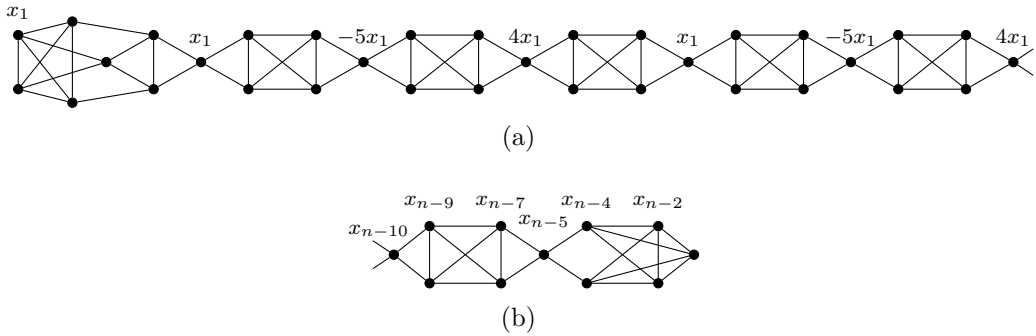 

We have
$$Q-\lambda I=\begin{tikzpicture}[baseline=(current bounding box.center)]
	\matrix (m) [matrix of math nodes,nodes in empty cells,right delimiter={]},left delimiter={[} ]{
		1-\lambda & 2 & 1 & 0 & & & & & & \\
		2 & 1-\lambda & 0 & 1 & & & & & &\\
		2 & 0 & -\lambda & 2  & & & & & & \\
		0 & 1 &1  & 1-\lambda  & & & & & &\\
		& & &  &  & & & & & \\
		& & &  &  & & & & & \\
		& & & &  &  &1-\lambda & 1 &1 & 0   \\
		& & & & & & 2 & -\lambda & 0 & 2  \\
		& & & & & &1 &0 &1-\lambda & 2  \\
		& & & & & &  0&1 &2 &1-\lambda   \\
	} ;
	\draw[line width=0.3mm, loosely  dotted] (m-4-4)-- (m-7-7);
	\draw[line width=0.3mm,loosely dotted] (m-4-3)-- (m-8-7);
	\draw[line width=0.3mm,loosely dotted] (m-3-4)-- (m-7-8);
\end{tikzpicture}_{m\times m}.$$

Now we perform the following elementary operations on the rows $R_i$ and the columns $C_i$ as follows: add $\frac{-2}{1-\lambda}R_1$ to $R_3$,  add $\frac{1-\lambda}{4}R_3$ to $R_4$,  add
$\frac{-1}{2}R_{m-2}$ to $R_{m-1}$, and finally add $\frac{-1}{1-\lambda}C_m$ to $C_{m-2}$.
The resulting matrix is
$$Q'=\begin{tikzpicture}[baseline=(current bounding box.center)]
	\matrix (m) [matrix of math nodes,nodes in empty cells,right delimiter={]},left delimiter={[} ]{
		1-\lambda & 2 & 1 & 0 & & & & & & \\
		2 & 1-\lambda & 0 & 1 & & & & & &\\
		0 & \frac{-4}{1-\lambda} & \frac{\lambda^2-\lambda-2}{1-\lambda   } & 2  & & & & & & \\
		0 & 0 & \frac{\lambda^2-\lambda+2}{4} & {\frac{3-3\lambda}{2}}   & & & & & &\\
		& & &  &  & & & & & \\
		& & &  &  & & & & & \\
		& & & &  &  & 1-\lambda & 1 &1 &0   \\
		& & & & & &  2 & \frac{\lambda^2-\lambda-2}{1-\lambda   } & 0 & 2   \\
		& & & & & &0 &\frac{-\lambda^2+\lambda-2}{2-2\lambda   } &1-\lambda & 2  \\
		& & & & & &  0&0 &2 &1-\lambda  \\
	} ;
	\draw[line width=0.3mm, loosely  dotted] (m-4-4)-- (m-7-7);
	\draw[line width=0.3mm,loosely dotted] (m-4-3)-- (m-8-7);
	\draw[line width=0.3mm,loosely dotted] (m-3-4)-- (m-7-8);
\end{tikzpicture}.$$
So ${\rm rank}(Q-\lambda I)={\rm rank}(Q')$.
Since  $\lambda^2-\lambda+2\ne0$ for real $\la$, the submatrix obtained by deleting the first row and the last column of $Q'$ is upper triangular with non-zero diagonal.
This implies that rank$(Q')\ge m-1$ and
so the nullity of $Q-\la I$ is at most  $1$. Therefore, $\lambda$ is a simple eigenvalue of $Q$.	
\end{proof}

In the next theorem, we show that the eigenvalues of graphs of $\Fq^*$ with eigenvectors of second type come from a small set of size eight.
This, in view of Lemma~\ref{lem:1stTypeQua}, implies that non-simple eigenvalues are restricted to at most eight values.

\begin{theorem}\label{thm:simple}
Let $G\in\Fq^*$  be a graph of order $n\geq 11$. Then non-simple  eigenvalues of $G$  belong to 	
$$\Lambda:=\left\{-2,0, \pm 1, -1\pm\sqrt{2}, (-1\pm\sqrt{5})/2\right\}.$$
\end{theorem}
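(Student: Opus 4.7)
The plan is to combine Lemma~\ref{lem:1stTypeQua} with a block-by-block analysis of second-type eigenvectors. By Lemma~\ref{lem:type}, every eigenvector of $G\in\Fq^*$ is either of the first or second type with respect to the equitable partition $\Pi$ described at the beginning of the subsection. If $\lambda$ has only first-type eigenvectors, then $\la$ is an eigenvalue of the quotient matrix $Q$, and Lemma~\ref{lem:1stTypeQua} guarantees that $\lambda$ is simple. Consequently, for $\la$ to be non-simple it must admit at least one second-type eigenvector, and so the task reduces to showing
\[
\bigl\{\text{second-type eigenvalues of }G\bigr\}\subseteq\Lambda.
\]

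The key decoupling observation is that every cut vertex of $G$ forms a singleton cell of $\Pi$, hence a second-type eigenvector $\x$ must vanish at every cut vertex. This means the eigen-equations attached to any fixed block $B$ involve only the entries of $\x$ on the non-cut vertices of $B$, so the problem becomes an eigenvalue problem on each block separately, with zero boundary conditions. I would therefore carry out, for each of the six building blocks (the middle block $M$ and the five end blocks $D_1,\dots,D_5$), the following routine computation: parameterize the non-cut cells using variables $a,b,\dots$ constrained by the sum-to-zero condition on each cell, write down the eigen-equation \eqref{eq:eigenvector} at every non-cut vertex, and solve the resulting small linear system for the values of $\la$ that admit a non-trivial solution.

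Executing this blockwise produces exactly the following contributions: the middle block $M$ forces $\la=-1$; the end block $D_1$ forces $\la\in\{-1,0\}$; $D_2$ forces $\la\in\{-1,-1\pm\sqrt{2}\}$ (via the quadratic $(\la+1)^2=2$ obtained by eliminating the symmetric first-cell variables through the cell of size two adjacent to the cut vertex); $D_3$ forces $\la\in\{-2,-1,0\}$ (via $(\la+1)^2=1$ arising from the cross-coupling between the two cells of size two flanking the central vertex); $D_4$ forces $\la\in\{-2,-1,1\}$ (via $\la^2+\la-2=0$ from the first cell of size four coupled to the middle cell of size two); and $D_5$ forces $\la\in\{-1,(-1\pm\sqrt{5})/2\}$ (via $\la^2+\la-1=0$ from the pair of size-two cells $\{r_4,r_5\},\{r_6,r_7\}$). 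Taking the union of these six sets yields exactly $\Lambda$, which completes the proof.

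The only step requiring genuine care (rather than pure bookkeeping) is the blockwise analysis of $D_2,D_3,D_4,D_5$, where the first cell has size three or four and one must correctly identify which variables survive after imposing the sum-to-zero condition; the small quadratic equations produced at that point are the source of the non-rational elements of $\Lambda$. The analysis for $M$ and $D_1$ is entirely elementary and only produces $-1$ and $0$. Nothing in the argument depends on which combination of end blocks is chosen, so the conclusion holds uniformly over $\Fq^*$; moreover, under Conjecture~\ref{conj:MinQuartic}, only $D_1, D_2, D_3$ (and $D_5$ in a single residue class) appear, which eliminates the eigenvalues $\pm1$ from the possible non-simple ones and explains the parenthetical remark about reducing the exceptional set from eight to four.
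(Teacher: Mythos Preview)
Your proposal is correct and follows essentially the same approach as the paper: reduce to second-type eigenvectors via Lemma~\ref{lem:1stTypeQua}, use the vanishing at cut vertices to decouple the eigen-equations block by block, and analyze each of $M, D_1,\dots,D_5$ separately to recover exactly the set $\Lambda$. Your closing side remark about the reduction to four exceptional values under Conjecture~\ref{conj:MinQuartic} is slightly off---it is $-2$, $1$, and $(-1\pm\sqrt{5})/2$ that get eliminated (via a multiplicity-one argument, since $D_3$ and $D_5$ occur at most once and these values are shown not to be eigenvalues of $Q$), while $-1$ remains non-simple---but this does not affect the proof of the theorem itself.
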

\begin{proof}
Let $\lambda$ be an eigenvalue of $G$ with eigenvector $\x$ of the second type. In view of Lemma~\ref{lem:2ndtypeFc}, it suffices to show that $\la$ belongs to $\Lambda$.

 The components of $\x$ sum up to zero on each cell of  $\Pi$. In particular, the components of $\x$ on the cut vertices are zero.
Note that $\x\neq 0$ and we can always find a block $B$ such that  the components of $\x$ are not all zero on it.
Firstly, let $B$ be  a middle block. The components of $\x$ on $B$ are as shown in Figure~\ref{fig:lem:type2}.
Since the vertices $i$ and ${i+1}$ belong to the same cell in $\Pi$ and $\x$ is of the second type, we have $x_i+x_{i+1}=0$.
By using the eigen-equation, it can be seen that $\lambda  x_i=-x_i$. If $x_i\neq 0$, then  $\lambda=-1$, otherwise
$x_{i+2}\neq 0$ and from $\lambda  x_{i+2}=-x_{i+2}$ we again obtain $\lambda=-1$.
\begin{figure}[h!]
\centering
\begin{tikzpicture}[scale=0.9]
	\vertex[fill] (r) at (0,0) [label=above:\scriptsize{$0$}] {};
	\vertex[fill] (r1) at (.5,.5) [label=above:\scriptsize{$x_{i}$}] {};
	\vertex[fill] (r2) at (.5,-.5) [label=below:\scriptsize{$-x_{i}=x_{i+1}\ \ \ \ \ \ \ \ \ \ $}] {};
	\vertex[fill] (r3) at (1.5,.5) [label=above:\scriptsize{$x_{i+2}$}] {};
	\vertex[fill] (r4) at (1.5,-.5) [label=below:\scriptsize{$\ \ \ \ \ \ \ \ \ \ \ x_{i+3}=-x_{i+2}$}] {};
    \vertex[fill] (r5) at (2,0) [label=above:\scriptsize{$0$}] {};
	\path
		(r) edge (r1)
		(r) edge (r2)
		(r1) edge (r2)
		(r1) edge (r3)
        (r1) edge (r4)
		(r2) edge (r3)
	    (r2) edge (r4)
	    (r3) edge (r4)
	    (r5) edge (r4)
	    (r3) edge (r5);
\end{tikzpicture}
\caption{A middle block of $G$ and the components of an eigenvector of the second type}\label{fig:lem:type2}
\end{figure}
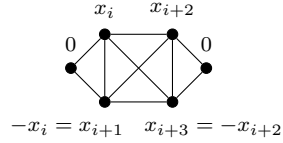
Now we examine the end blocks of $G$.
The end blocks and the components of $\x$ on them are illustrated
  in Figure~\ref{fig:proof}. Accordingly, one of the following cases (i)--(v) occurs.
\begin{itemize}
\item[(i)]  $B=D_1$.  Since $\x$ is of the second type, we have $x_1+x_2+x_3=0$.  If $x_4\neq 0$, then $\lambda x_4=x_1+x_2+x_3=0$, so $\lambda=0$.  If $x_4=0$, then at least one of $x_1,x_2,x_3$ must be non-zero. By the symmetry, we may suppose that $x_1\neq 0$. So  from  $\lambda x_1=x_2+x_3=-x_1$, we have that  $\lambda=-1$.
\item[(ii)] $B=D_2$. We have $x_1+x_2+x_3+x_4=0$. First, suppose $x_5\neq 0$. Then from $\lambda x_1=-x_1+x_5$ and $\lambda x_3=-x_3+x_5$, we obtain either $\lambda=-1$ or $0\neq x_1=x_3$.  If the latter holds, then $\lambda x_5=-x_5+2x_1$. From
$\lambda x_1=x_5-x_1$, then it follows that $\lambda^2+2\lambda -1=0$, that is
$\lambda=-1\pm\sqrt{2}$.
Next, suppose $x_5=0$. Then at least one of $x_1,\ldots,x_4$ must be non-zero. By the symmetry, we may assume that $x_1\neq 0$. Hence, from  $\lambda x_1=x_2+x_3+x_4=-x_1$, we have $\lambda=-1$.
\item[(iii)] $B=D_3$.
If $x_5\neq 0$, then $\lambda x_5=0$, so $\lambda=0$. Suppose that $x_5=0$. If
$x_1\neq 0$, then  from $\lambda x_1=-x_1$, we have $\lambda=-1$. If $x_1=0$, then from
$\lambda x_3=x_6-x_3$ and $\lambda x_6=x_3-x_6$,  it follows that either $\lambda=0$ or $0\neq x_3=-x_6$ and subsequently $\lambda=-2$.
\item[(iv)]  $B=D_4$.
We have $x_1+x_2+x_3+x_4=0$.  If $x_7\neq 0$, then $\lambda x_7=-x_7$, so $\lambda=-1$. Now, suppose that $x_7=0$. If $x_1=0$, then
$\lambda x_1=-x_1+x_5$ implies that  $x_5=0$ and then  $x_3=\lambda x_5=0$.
Since  all the components of $\x$ on the first cell cannot be zero, we have $0\neq x_2=-x_4$. Therefore,
$\lambda x_2=-x_2$ implies that  $\lambda=-1$. Next, suppose that  $x_1\neq 0$. If $x_5=0$,  then from
$\lambda x_1=-x_1+x_5$,  we have  $\lambda=-1$.  If $x_5\neq 0$, then  from $\lambda x_1=-x_1+x_5$ and
$\lambda x_3=-x_3+x_5$, we conclude that  $\lambda=-1$ or $0\neq x_1=x_3$. If the latter holds, then from $\lambda x_1=-x_1+x_5$ and $\lambda x_5=2x_1$, we obtain $\lambda=1$ or $\lambda=-2$.
\item[(v)] $B=D_5$.
We have  $x_1+x_2+x_3=0$. If $x_8\neq 0$, then $\lambda x_8=-x_8$ implying that $\lambda=-1$. Thus, assume that $x_8=0$. If all the components of $\x$ on the first cell are  zero, then from  $\lambda x_4=x_6$ and $\lambda x_6=-x_6+x_4$, we see that $x_4, x_6\neq 0$.  Now from $\lambda x_4=x_6$ and $\lambda x_6=-x_6+x_4$, it follows that
$\lambda^2+\lambda-1=0$, so
$\lambda=(-1\pm\sqrt{5})/{2}$. Otherwise, by the symmetry, we may suppose that $x_1\neq 0$.  Hence, from $\lambda x_1=-x_1$, we obtain $\lambda=-1$.
\end{itemize}
The proof is now complete.
  \end{proof}
\begin{figure}[h!]
	\captionsetup[subfigure]{labelformat=empty}
	\centering
	\subfloat[$D_1$]{\begin{tikzpicture}[scale=0.9]
			\vertex[fill] (r) at (0,0) [label=above:\scriptsize{$x_1$}] {};
			\vertex[fill] (r1) at (.5,.5) [label=above:\scriptsize{$x_2$}] {};
			\vertex[fill] (r2) at (.5,-.5) [label=below:\scriptsize{$x_3$}] {};
			\vertex[fill] (r3) at (1.5,.5) [label=above:\scriptsize{$x_4$}] {};
			\vertex[fill] (r4) at (1.5,-.5) [label=below:\scriptsize{$-x_4$}] {};
			\vertex[fill] (r5) at (2,0) [label=above:\scriptsize{$0$}] {};
			\path
			(r) edge (r1)
			(r) edge (r2)
			(r) edge (r3)
			(r) edge (r4)
			(r1) edge (r2)
			(r1) edge (r3)
			(r1) edge (r4)
			(r2) edge (r3)
			(r2) edge (r4)
			(r5) edge (r4)
			(r5) edge (r3)
			;
	\end{tikzpicture}}
	~~
	\subfloat[$D_2$]{\begin{tikzpicture}[scale=0.9]
			\vertex[fill] (r1) at (0,.8) [label=above:\scriptsize{$x_1$}] {};
			\vertex[fill] (r2) at (.8,1) [label=above:\scriptsize{$x_3$}] {};
			\vertex[fill] (r3) at (0,0) [label=below:\scriptsize{$x_2$}] {};
			\vertex[fill] (r4) at (.8,-.2) [label=below:\scriptsize{$x_4$}] {};
			\vertex[fill] (r5) at (1.6,.8) [label=above:\scriptsize{$x_5$}] {};
			\vertex[fill] (r6) at (1.6,0) [label=below:\scriptsize{$-x_5$}] {};
			\vertex[fill] (r7) at (2.3,.4) [label=above:\scriptsize{$0$}] {};
			\path
			(r5) edge (r2)
			(r1) edge (r2)
			(r1) edge (r5)
			(r1) edge (r3)
			(r1) edge (r4)
			(r2) edge (r3)
			(r2) edge (r4)
			(r3) edge (r6)
			(r4) edge (r3)
			(r4) edge (r6)
			(r5) edge (r6)
			(r7) edge (r6)
			(r5) edge (r7)
			;
	\end{tikzpicture}}
	~~
	\subfloat[$D_3$]{\begin{tikzpicture}[scale=0.8]
			\vertex[fill] (r1) at (0,0) [label=below:\scriptsize{$-x_1$}] {};
			\vertex[fill] (r2) at (0,.8) [label=above:\scriptsize{$x_1$}] {};
			\vertex[fill] (r3) at (.8,1) [label=above:\scriptsize{$x_3$}] {};
			\vertex[fill] (r4) at (.8,-.2) [label=below:\scriptsize{$-x_3$}] {};
			\vertex[fill] (r5) at (1.3,.4) [label=above:\scriptsize{$x_5$}] {};
			\vertex[fill] (r6) at (2,.8) [label=above:\scriptsize{$x_6$}] {};
			\vertex[fill] (r7) at (2,0) [label=below:\scriptsize{$-x_6$}] {};
			\vertex[fill] (r8) at (2.7,.4) [label=above:\scriptsize{$0$}] {};
			\path
			(r5) edge (r2)
			(r1) edge (r2)
			(r1) edge (r5)
			(r1) edge (r3)
			(r1) edge (r4)
			(r2) edge (r3)
			(r2) edge (r4)
			(r3) edge (r6)
			(r4) edge (r3)
			(r4) edge (r7)
			(r5) edge (r7)
			(r5) edge (r6)
			(r6) edge (r7)
			(r7) edge (r8)
			(r6) edge (r8)  ;
	\end{tikzpicture}}
	~~
	\subfloat[$D_4$]{\begin{tikzpicture}[scale=0.8]
			\vertex[fill] (r1) at (0,1) [label=above:\scriptsize{$x_1$}] {};
			\vertex[fill] (r2) at (.8,1.2) [label=above:\scriptsize{$x_3$}] {};
			\vertex[fill] (r3) at (0,0) [label=below:\scriptsize{$x_2$}] {};
			\vertex[fill] (r4) at (.8,-.2) [label=below:\scriptsize{$x_4$}] {};
			\vertex[fill] (r5) at (1.6,1) [label=above:\scriptsize{$x_5$}] {};
			\vertex[fill] (r6) at (1.6,0) [label=below:\scriptsize{$-x_5$}] {};
			\vertex[fill] (r7) at (2.6,1) [label=above:\scriptsize{$x_7$}] {};
			\vertex[fill] (r8) at (2.6,0) [label=below:\scriptsize{$-x_7$}] {};
			\vertex[fill] (r9) at (3.1,.5) [label=above:\scriptsize{$0$}] {};
			\path
			(r5) edge (r2)
			(r1) edge (r2)
			(r1) edge (r5)
			(r1) edge (r3)
			(r1) edge (r4)
			(r2) edge (r3)
			(r2) edge (r4)
			(r3) edge (r6)
			(r4) edge (r3)
			(r4) edge (r6)
			(r5) edge (r8)
			(r5) edge (r7)
			(r6) edge (r8)
			(r6) edge (r7)
			(r7) edge (r8)
			(r9) edge (r7)
			(r9) edge (r8)
			;
	\end{tikzpicture}}
	~~
	\subfloat[$D_5$]{\begin{tikzpicture}[scale=0.8]
			\vertex[fill] (r1) at (0,0) [label=above:\scriptsize{$x_1$}] {};
			\vertex[fill] (r3) at (.5,.5) [label=above:\scriptsize{$x_2$}] {};
			\vertex[fill] (r2) at (.5,-.5) [label=below:\scriptsize{$x_3$}] {};
			\vertex[fill] (r5) at (1.5,.5) [label=above:\scriptsize{$x_4$}] {};
			\vertex[fill] (r4) at (1.5,-.5) [label=below:\scriptsize{$-x_4$}] {};
			\vertex[fill] (r6) at (2.3,-.5) [label=below:\scriptsize{$-x_6$}] {};
			\vertex[fill] (r7) at (2.3,.5) [label=above:\scriptsize{$x_6$}] {};
			\vertex[fill] (r8) at (3.3,-.5) [label=below:\scriptsize{$-x_8$}]{};
			\vertex[fill] (r9) at (3.3,.5) [label=above:\scriptsize{$x_8$}] {};
			\vertex[fill] (r10) at (3.8,0) [label=above:\scriptsize{$0$}] {};
			\path
			(r1) edge (r2)
			(r1) edge (r3)
			(r1) edge (r4)
			(r1) edge (r5)
			(r2) edge (r3)
			(r2) edge (r4)
			(r2) edge (r5)
			(r3) edge (r4)
			(r3) edge (r5)
			(r6) edge (r4)
			(r5) edge (r7)
			(r6) edge (r7)
			(r6) edge (r8)
			(r6) edge (r9)
			(r8) edge (r7)
			(r7) edge (r9)
			(r8) edge (r9)
			(r8) edge (r10)
			(r9) edge (r10)
			;
	\end{tikzpicture}}
	\caption{End blocks of $G$ and the components of an eigenvector of the second type} \label{fig:proof}
\end{figure}
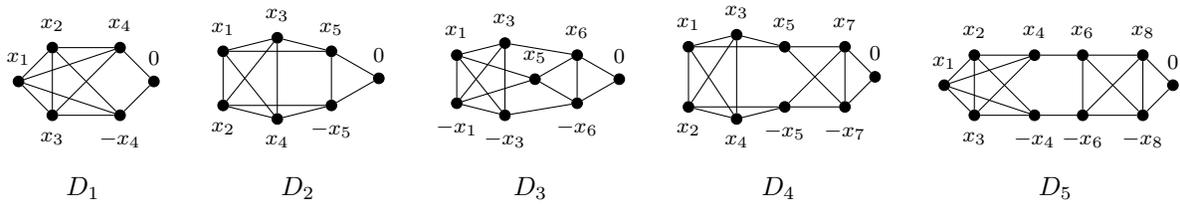

Conjecture~\ref{conj:MinQuartic}, if true, yields an improvement on Theorem~\ref{thm:simple} as follows.
From the proof of Theorem~\ref{thm:simple}, we observe that, the multiplicity of  the eigenvalue $-1$ is at least twice the number of the middle blocks of a  quartic graph $G\in\Fq^*$.
  So it can be seen that, the multiplicity of the eigenvalue $-1$ is at least
 $(2n-38)/5$.
 If Conjecture~\ref{conj:MinQuartic} is true, then the minimal quartic graph $G$ would  not contain $D_4$. Also $D_3$ and $D_5$ can only appear at most once as an end block of $G$. From the  proof of Theorem~\ref{thm:simple},   and  by the fact that
 $1$ and $(-1\pm\sqrt{5})/2$ cannot be an eigenvalue of $Q$ 
  (by a similar argument as in the proof of Theorem~\ref{thm:simplecubic}),
 it follows that the multiplicity of eigenvalues $1$ and $(-1\pm\sqrt{5})/2$ are at most one. Thus the non-simple eigenvalues of $G$ will be restricted to  $\left\{0, -1, -1\pm\sqrt2\right\}$.

Similarly, for cubic graphs with minimum spectral gap, the multiplicity of  the eigenvalue $-1$ is at least equal to the number of the middle blocks of $G$. So it is at least $(n-12)/4$. Therefore  $-1$ is always an eigenvalue of  cubic and quartic  graphs with minimum spectral gap which has the maximum multiplicity.

 \section{Gap intervals for minimal regular graphs}\label{sec:gap}

In this section, we determine the gap intervals for the  graphs $\de_n$ and $\ga_n$.
We also show that the smallest eigenvalues of $\de_n$ and $\ga_n$ are convergent as $n$ grows and specify the limits.

\subsection{Gap intervals for  the spectrum of $\de_n$}

 \begin{remark} \label{remark2}\rm
Let  $\lambda \notin \lbrace -1, 0\rbrace$ be an eigenvalue of  $\de_n$ with eigenvector $\x=(x_1, \ldots, x_n)$.
By the symmetry of $\de_n$, $\x'=(x_n, \ldots, x_1)$  is also an eigenvector for $\lambda$.
By Theorem~\ref{thm:simplecubic},  $\lambda$ is simple. It follows that  $\x=\pm\x'$, implying that
\begin{equation}\label{eq:pm}
 |x_i|=|x_{n-i+1}|, ~{\text{for}}~i=1, \ldots, n.
\end{equation}
 \end{remark}
We start with a lower bound on the smallest eigenvalue of $\de_n$.
 \begin{theorem} \label{thm:Gapcubic1}
If $n\geq 14$, then   $\rho(\de_n)\geq -1-\sqrt{2}$.
\end{theorem}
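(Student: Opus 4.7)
The plan is a proof by contradiction. Assume $\la:=\rho(\de_n)<-1-\sqrt{2}$; then $\la\notin\{0,-1\}$, so by Theorem~\ref{thm:simplecubic} the eigenvalue $\la$ is simple and, by Lemma~\ref{lem:2ndtypeFc}, its eigenvector $\x$ must be of the first type. Remark~\ref{remark2}, combined with the simplicity of $\la$, implies that $\x$ is either symmetric or antisymmetric under the natural left--right reflection of $\de_n$. Normalising the first main component of $\x$ to $1$, the symmetry forces $a_{m+1}=\epsilon\cdot\tfrac{\la-1}{2}$ for some $\epsilon\in\{+1,-1\}$, where $a_0,a_1,\dots,a_{m+1}$ are the main components of $\x$.

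By Lemma~\ref{lem:Recursion} the $a_j$ are given in closed form by \eqref{eq:PQRS}; substituting this expression into the boundary condition reduces it to the single scalar equation
\[
(P+1)\,R^{m+1}+(1-P)\,S^{m+1}\;=\;2\epsilon,
\]
with $R$, $S$, $P$, $k$ as in Lemma~\ref{lem:Recursion}. I would then show that for every $m\geq 1$ (equivalently every $n\geq 14$) and every $\epsilon\in\{\pm 1\}$, this equation has no solution with $\la<-1-\sqrt{2}$. A direct calculation establishes that $\la<-1-\sqrt{2}$ forces $k<-2-\sqrt{2}$, so $k^2-4>0$ and the roots of $x^{2}-kx+1=0$ are real with $R<-(1+\sqrt{2})<S<0$, $RS=1$, and $|R|^{m+1}$ growing without bound as $\la$ decreases.

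The key step is the observation that at the boundary value $\la=-1-\sqrt{2}$ one computes $k=-2-\sqrt{2}$, and the displayed equation is satisfied exactly when $(m,\epsilon)=(1,+1)$---this pins down the sharp equality $\rho(\de_{14})=-1-\sqrt{2}$ and shows that the claimed lower bound is tight. For every other $(m,\epsilon)$ the left-hand side differs from $2\epsilon$ already at the boundary, and a monotonicity analysis of the left-hand side on $(-\infty,-1-\sqrt 2]$, based on the asymptotic dominance of the $(P+1)R^{m+1}$ term over $(1-P)S^{m+1}$ (since $|R|>1$ and $|S|<1$), rules out any further root in this interval. The principal obstacle is bookkeeping: because $R<0$, the sign of $R^{m+1}$ alternates with the parity of $m+1$, so the argument must treat four sub-cases according to this parity and the sign $\epsilon$ separately; in each sub-case, however, the exponential growth of $|R|^{m+1}$ against the bounded right-hand side $2\epsilon$ closes the argument and yields the desired contradiction, establishing $\rho(\de_n)\geq-1-\sqrt{2}$ for all $n\geq 14$.
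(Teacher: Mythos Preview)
Your approach is genuinely different from the paper's and is closer in spirit to the proof of Theorem~\ref{thm:GapCubic}: you aim to rule out eigenvalues directly via the closed form of Lemma~\ref{lem:Recursion} combined with the reflection symmetry, whereas the paper uses an edge--clique--cover identity $MM^\top = 2I + A + B + C$ together with a Rayleigh--quotient estimate to manufacture an explicit polynomial inequality $g(\rho)>0$ that forces $\rho > -2.406$ for all $n\ge22$ simultaneously (with $n=14,18$ handled by direct computation). Your observation that $\rho(\de_{14}) = -1-\sqrt{2}$ exactly is correct and shows the bound is sharp, a fact not visible from the paper's argument; your necessary condition $(P+1)R^{m+1}+(1-P)S^{m+1}=2\epsilon$ is also correctly derived.

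There is, however, a genuine gap. The step ``a monotonicity analysis \ldots\ based on the asymptotic dominance of the $(P+1)R^{m+1}$ term'' does not close the argument as stated. Asymptotic dominance is a statement about $\la\to-\infty$ (or about $m\to\infty$ for fixed $\la$); what you actually need is that for each fixed $m\ge 2$ the function $\la\mapsto(P+1)R^{m+1}+(1-P)S^{m+1}$ avoids the values $\pm 2$ on the \emph{bounded} interval $[-3,-1-\sqrt{2})$. Since $P$, $R$, $S$ all vary with $\la$, growth of one factor does not by itself preclude the sum from crossing $\pm 2$ somewhere. What would make this work is a uniform lower bound on the modulus: one can check that on $[-3,-1-\sqrt{2}]$ both $P+1$ and $|R|$ attain their minima at the right endpoint (roughly $0.19$ and $3.09$), and that the two summands have the same sign, so $|F_m(\la)|\ge (P+1)|R|^{m+1}\big|_{\la=-1-\sqrt2}\approx 0.19\cdot 3.09^{\,m+1}>2$ for $m\ge 2$. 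But establishing the monotonicity of $P+1$ and $|R|$ in $\la$ is exactly the analytic work you have not done, and it is more than ``bookkeeping'' on parity and $\epsilon$. The paper's Rayleigh--quotient route buys you a single polynomial inequality valid uniformly in $m$, at the cost of a weaker (non-sharp) constant; your route would give the sharp constant but needs this missing estimate to be complete.
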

\begin{proof}
For convenience, let us set $G:=\de_n$ and $\rho:=\rho(G)$.
If $n=14$ or $18$, then by direct computation we see that  $\rho\geq -1-\sqrt{2}$. So we suppose that $n\geq 22$.
Since $G$ is not a union of complete graphs, we have $\rho <-1$.
 So, by the proof of Theorem~\ref{thm:simplecubic}, $\rho $ has a unit eigenvector $\x$ that is constant on each cell of the equitable partition $\Pi$ of $G$ and further, by Remark~\ref{remark2}, the components of $\x$ satisfy \eqref{eq:pm}.

 The first step of the proof involves decomposing $A(G)$ as the sum of a positive definite matrix
	and two other `simpler' matrices.

Consider a set of cliques ($K_2$  and $K_3$) of $G$ that covers the edges of $G$, as shown in Figure~\ref{fig:clique2}.
 \begin{figure}[h!]
 \centering
\begin{tikzpicture}[scale=1.1]
	\vertex[fill] (1) at (0,-.5) [label=left:\scriptsize{$x_2$}] {};
	\vertex[fill] (2) at (0,.5) [label=left:\scriptsize{$x_1$}] {};
	\vertex[fill] (3) at (1,-.5) [label=below:\scriptsize{$x_4$}] {};
	\vertex[fill] (4) at (1,.5) [label=above:\scriptsize{$x_3$}] {};
    \vertex[fill] (5) at (1.5,0) [label=above:\scriptsize{$x_5$}] {};
    \vertex[fill] (6) at (2,0) [] {};
    \vertex[fill] (7) at (2.5,.5) [] {};
    \vertex[fill] (8) at (2.5,-.5) [] {};
    \vertex[fill] (9) at (3,0) [] {};
    \vertex[fill] (26) at (4.9,0) [] {};
    \vertex[fill] (27) at (5.4,.5) [] {};
    \vertex[fill] (28) at (5.4,-.5) [] {};
    \vertex[fill] (29) at (5.9,0) [] {};
    \vertex[fill] (34) at (7.9,-.5) [label=right:\scriptsize{$x_n$}] {};
	\vertex[fill] (33) at (7.9,.5) [label=right:\scriptsize{$x_{n-1}$}] {};
	\vertex[fill] (32) at (6.9,-.5) [label=below:\scriptsize{$x_{n-2}$}] {};
	\vertex[fill] (31) at (6.9,.5) [label=above:\scriptsize{$x_{n-3}$}] {};
    \vertex[fill] (30) at (6.4,0) [label=above:\scriptsize{$x_{n-4}\ \ \ $}] {};
    \tikzstyle{vertex}=[circle, draw, inner sep=0pt, minimum size=1pt]
     \vertex[fill] (16) at (3.75,0) [] {};
      \vertex[fill] (18) at (3.95,0) [] {};
    \vertex[fill] (20) at (4.15,0) [] {};
      \tikzstyle{vertex}=[circle, draw, inner sep=0pt, minimum size=0pt]
       \vertex[fill] (14) at (3.3,0) [] {};
    \vertex[fill] (21) at (4.6,0) [] {};
	\path[densely dashed,very thick]
		(1) edge (2);
	\path[densely dashed]
		(1) edge (3)
	    (1) edge (4)
		(2) edge (3)
        (2) edge (4);
        \path
    	(3) edge (5)
	    (4) edge (5)
	   (5) edge (6);
	    \path[densely dashed]
	    (6) edge (7)
	    (6) edge (8);
	    \path[densely dashed,very thick]
	    (7) edge (8);
	    \path[densely dashed]
	    (7) edge (9)
	    (8) edge (9);
	    \path
     (14) edge (9)
	   (26) edge (21);
	   \path[densely dashed]
	    (26) edge (27)
	    (26) edge (28);
	    \path[densely dashed,very thick]
	    (27) edge (28);
	    \path[densely dashed]
	    (27) edge (29)
	    (28) edge (29);
	    \path
	    (29) edge (30)
	    (30) edge (31)
        (30) edge (32);
        \path[densely dashed]
        (31) edge (33)
        (31) edge (34)
	    (32) edge (33)
        (32) edge (34);
       \path[densely dashed,very thick]
        (33) edge (34);
	    \end{tikzpicture}
\caption{The edge clique cover of  $G$: each normal line indicates a $K_2$, each dashed triangle is a $K_3$; thick dashed edges are covered by two $K_3$.} \label{fig:clique2}
\end{figure}
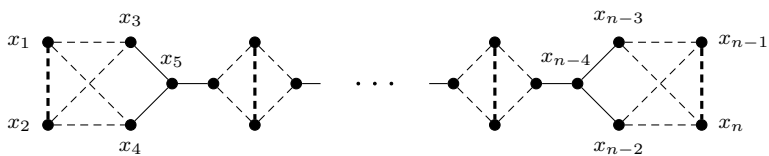

 Let $H$ be the spanning subgraph of $G$ induced by
the thick dashed edges,
 see  Figure~\ref{fig:Hcubic}.
  \begin{figure}[h!]
 \centering
\begin{tikzpicture}[scale=1]
	\vertex[fill] (1) at (0,-.5) [] {};
	\vertex[fill] (2) at (0,.5) [] {};
	\vertex[fill] (3) at (1,-.5) [] {};
	\vertex[fill] (4) at (1,.5) [] {};
    \vertex[fill] (5) at (1.5,0) [] {};
    \vertex[fill] (6) at (2,0) [] {};
    \vertex[fill] (7) at (2.5,.5) [] {};
    \vertex[fill] (8) at (2.5,-.5) [] {};
    \vertex[fill] (9) at (3,0) [] {};
    \vertex[fill] (26) at (4.9,0) [] {};
    \vertex[fill] (27) at (5.4,.5) [] {};
    \vertex[fill] (28) at (5.4,-.5) [] {};
    \vertex[fill] (29) at (5.9,0) [] {};
    \vertex[fill] (34) at (7.9,-.5) [] {};
	\vertex[fill] (33) at (7.9,.5) [] {};
	\vertex[fill] (32) at (6.9,-.5) [] {};
	\vertex[fill] (31) at (6.9,.5) [] {};
    \vertex[fill] (30) at (6.4,0) [] {};
      \tikzstyle{vertex}=[circle, draw, inner sep=0pt, minimum size=1pt]
    \vertex[fill] (16) at (3.75,0) [] {};
    \vertex[fill] (18) at (3.95,0) [] {};
    \vertex[fill] (20) at (4.15,0) [] {};
	\path
		(1) edge (2)
	    (7) edge (8)
	    (27) edge (28)
        (33) edge (34) ;
\end{tikzpicture}
\caption{The  subgraph $H$ of $G$} \label{fig:Hcubic}
\end{figure}
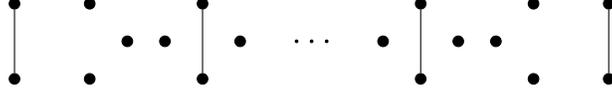

  Let $M$ be the vertex-clique incidence matrix of $G$. So, the rows and columns of $M$ are indexed by the vertices and the non-trivial cliques of $G$, respectively, and its $(i,j)$-entry is $1$ if $i$-th vertex belongs to the $j$-th  clique and $0$ otherwise.
  Also let $A$ and $B$ be  the adjacency matrices of $G$ and $H$, respectively, and $C$ be the matrix whose entries are all zero except for $C_{5,5}=C_{n-4,n-4}=1$.
  Note that the $(i,j)$-entry of $MM^\top$ is equal to the inner product of the two rows of $M$ corresponding to the vertices $i$ and $j$. Each vertex of $G$ belongs to two cliques of the above clique covering except for the vertices $5$ and $n-4$ which appear in three cliques. This implies that the diagonal entries of $MM^\top$ and $2I+C$ are the same. Now, if $ij \notin  E(G)$,
   $i$ and $j$ do not appear in any clique together. If $ij \in E(G)\backslash E(H)$, then $i$ and $j$ appear in exactly one clique together and if $ij \in E(H)$, they appear in two cliques together. From this argument, it follows that the off-diagonal entries of $MM^\top$ coincide with those of $A+B$.  It turns out that
$$MM^\top=2I+A+B+C.$$
Therefore,
$$\rho=\x A\x^\top=(\x M)(\x M)^\top-2-\x B\x^\top-\x C\x^\top.$$
We have
$\x B\x^\top=2\sum_{ij\in E(H)}x_ix_j$
and since $|x_5|=|x_{n-4}|$,  $\x C\x^\top=2x_5^2.$
It follows that
\begin{equation}\label{eq:lambda1cubic}
\rho \geq -2-2\sum_{ij\in E(H)}x_ix_j-2x_5^2.
\end{equation}

 The second step of the proof involves
finding an estimate for $\sum_{ij\in E(H)}x_ix_j$ in terms of
first few $x_i$'s which in turn results in an inequality expressed as a rational function in $\rho$ from which  in the final step we drive the desired lower bound on $\rho$.

Note that if $ij\in E(H)$, then $x_ix_j=x_i^2$.
Suppose that $x_r, x_s,$  and $x_t$ are the components of $\x$ on the cells of a middle block of $G$, as depicted in
Figure~\ref{fig:1middle}.
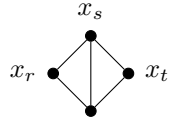
\begin{figure}[h!]
 \centering
\begin{tikzpicture}[scale=1]
    \vertex[fill] (6) at (2,0) [label=left:\footnotesize{$x_r$}] {};
    \vertex[fill] (7) at (2.5,.5) [label=above:\footnotesize{$x_s$}] {};
    \vertex[fill] (8) at (2.5,-.5) [] {};
    \vertex[fill] (9) at (3,0) [label=right:\footnotesize{$x_t$}] {};
	\path
	    (6) edge (7)
	    (6) edge (8)
	    (7) edge (8)
	    (7) edge (9)
	    (8) edge (9)  ;
\end{tikzpicture}
\caption{A middle block of  $G$ and the components of $\x$} \label{fig:1middle}
\end{figure}

By using the eigen-equation on the vertex $s$, we have
$x_s=(x_r+x_t)/(\rho-1)$.
  Then we see that $x_s^2\leq k(x_r^2+x_t^2)$, in which $k=2/(\rho-1)^2$. Let $S$ be the set of cut vertices of $G$, so
  \begin{align*}
  \sum_{ij\in E(H)}x_ix_j &\leq k\sum_{i\in S} x_i^2- 2kx_5^2+2x_1^2  \\
  &=k\left(1-2\sum_{ij\in E(H)}x_ix_j-4x_3^2\right)-2kx_5^2+2x_1^2,
  \end{align*}
  in which the last equality follows from the facts that $$|x_3|=|x_4|=|x_{n-3}|=|x_{n-2}|,\quad
    1=\|\x\|=2 \sum_{ij\in E(H)}x_ix_j+\sum_{i\in S} x_i^2+4x^2_3.$$
 Therefore,
  \begin{align} \label{eq:lambda2cubic}
  \sum_{ij\in E(H)}x_ix_j \leq \frac{k(1-4x_3^2-2x_5^2)+2x_1^2}{1+2k}.
  \end{align}
  From \eqref{eq:lambda1cubic} and \eqref{eq:lambda2cubic}, we obtain
\begin{equation}
\rho \geq -2-\frac{2k(1-4x_3^2-2x_5^2)+ 4x_1^2}{1+2k}-2x_5^2.
\end{equation}
By the eigen-equation, we see that
\begin{equation}\label{eq:x3x4}
x_3=\frac{1}{2}(\rho-1)x_1 ~~{\text{and}}~~ x_5=\frac{1}{2}(\rho^2-\rho-4)x_1,
\end{equation}
from which it follows that
\begin{equation}\label{eq:lambda3cubic}
\rho\geq -2-\frac{(\rho-1)^2(\rho^2-\rho-4)^2 x_1^2}{2(\rho^2-2\rho+5)}-\frac{4}{\rho^2-2\rho+5}.
\end{equation}
The facts that  $\|\x\|=1$, $n\geq 22$,
 $x_i^2=x_{n-i+1}^2$, $x_1=x_2$, and $x_7=x_8$ yield
 \begin{equation} \label{eq:x1x10}
 2x_1^2+2x_3^2+x_5^2+x_6^2+2x_7^2+x_{9}^2+x_{10}^2\leq\frac{1}{2}.
 \end{equation}
 Similarly to \eqref{eq:x3x4}, we can also write  $x_6, x_7, x_{9},$ and $x_{10}$ in terms of $x_1$.
 By plugging in all these into \eqref{eq:x1x10}, we obtain
\begin{equation}\label{eq:lambda4cubic}
x_1^2 \leq\frac8{f(\rho)},
\end{equation}
where
$$f(t)=t^{12}-4t^{11}-15t^{10}+64t^{9}+88t^{8}-364t^{7}-284t^{6}+840t^{5}+527t^{4}-584t^{3}-245 t^{2}-16t+280.$$
  Now from \eqref{eq:lambda3cubic} and \eqref{eq:lambda4cubic}, we get the following inequality in terms of $\rho$:
\begin{equation}\label{eq:g/f}
	\frac{g(\rho)}{f(\rho)\left(\rho^2-2\rho+5\right)}>0,
\end{equation}
  where
\begin{align*}
g(t)&=t^{15}-4t^{14}-14t^{13}+74t^{12}+17t^{11}-510 t^{10}+700t^{9}+1708t^{8}-4853t^{7}\\
&\quad-3716t^{6}+12026t^{5}+6770t^{4}-8061t^{3}-3474t^{2}-40t+3984.
\end{align*}
We observe that both $f(t)$ and $t^2-2t+5$ have no real zeros and so they are always positive. So \eqref{eq:g/f} implies that $g(\rho)>0$.
 Also $g(t)$ has a unique real zero which is greater than
$-2.406$. This means that if
$t\le-2.406$, then $g(t)<0$. It follows that  $\rho > -2.406> -1-\sqrt{2}$.
\end{proof}
Now, we are ready to prove the main result of this subsection.
 \begin{theorem}\label{thm:GapCubic}
 \begin{itemize}
 	\item[\rm(i)] The eigenvalues of $\de_n$ lying in the interval $\left[-3,-\sqrt{5}\right]$ converge to $(1-\sqrt{33})/2$ as $n$ tends to infinity. In particular,
 	$\underset{n\to \infty}{\lim}\rho(\de_n)=(1-\sqrt{33})/{2}$.
 		\item[\rm(ii)] $\de_n$ has no eigenvalue in the  interval $(1,\sqrt{5}]$.
 \end{itemize}
  \end{theorem}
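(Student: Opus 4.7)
My plan is to reduce both parts to a single scalar eigenvalue equation and then analyse it on each interval. For any eigenvalue $\la\notin\{0,\pm1,\pm\sqrt5,3\}$ of $\de_n$, Theorem~\ref{thm:simplecubic} together with Lemma~\ref{lem:2ndtypeFc} forces $\la$ to be simple with a first-type eigenvector $\x$; by Remark~\ref{remark2}, $\x=\epsilon\x'$ for some $\epsilon\in\{\pm1\}$, so the value on the last size-$2$ cell of $\de_n$ equals $\epsilon$, and the eigen-equation at that cell gives $a_{m+1}=\epsilon a_0=\epsilon(\la-1)/2$. I would substitute the closed form of Lemma~\ref{lem:Recursion} and use $RS=1$ to see that $r:=R^{m+1}$ solves $(P+1)r^{2}-2\epsilon r-(P-1)=0$, with roots $r=\epsilon$ and $r=-\epsilon(P-1)/(P+1)$. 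Whenever $|R|\ne 1$ the first root is impossible, so the eigenvalue equation reduces to
\[
R^{m+1}=-\epsilon\,\frac{P-1}{P+1}.\qquad(\ast)
\]
A direct expansion with $A:=-\tfrac12(\la+1)(\la-3)(\la+3)$ (the numerator of $P=A/B$, where $B=\sqrt{k^{2}-4}$) yields the key identity
\[
A^{2}-B^{2}=(\la+1)^{2}(\la-3)(\la^{2}-\la-8),
\]
so $|P|=1$ precisely when $\la=(1\pm\sqrt{33})/2$; in particular $P=-1$ at the candidate limit $(1-\sqrt{33})/2$.

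For part~(ii) I would argue that on $(1,\sqrt5)$ the formula for $k$ gives $k<-2$, so $R,S$ are real with $R<-1<S<0$ and $|R|^{m+1}>1$ for every $m\ge 0$. At the same time $\la^{2}-\la-8<0$ (since $(1,\sqrt5)\subset((1-\sqrt{33})/2,(1+\sqrt{33})/2)$) and $(\la-3)<0$, so $A^{2}>B^{2}$; combined with $A>0$ this yields $P>1$, hence $|(P-1)/(P+1)|<1$. Equation $(\ast)$ then demands $1<|R|^{m+1}=|(P-1)/(P+1)|<1$, a contradiction. The boundary case $\la=\sqrt5$ is degenerate ($R=S=-1$); I would switch to the general solution $a_j=(-1)^j(c_1+c_2 j)$ with $c_1=(\sqrt5-1)/2$ and $c_2=2$ (from the initial conditions of Lemma~\ref{lem:Recursion}), and observe that $a_{m+1}=\epsilon a_0$ reduces to $2(m+1)\in\{0,\pm(1+\sqrt5)\}$, which admits no nonnegative integer solution.

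For part~(i), every eigenvalue in $[-3,-\sqrt5]$ is first-type (since $0,-1\notin[-3,-\sqrt5]$) and satisfies $(\ast)$. A monotonicity check shows $P$ decreases continuously from $P(-3)=0$ through $P((1-\sqrt{33})/2)=-1$ to $P(-\sqrt5^-)=-\infty$, so $|(P-1)/(P+1)|$ stays bounded on any compact subset of $[-3,-\sqrt5]$ avoiding $(1-\sqrt{33})/2$. For a sequence of eigenvalues $\mu_n\in[-3,-\sqrt5]$ and any subsequential limit $\mu_*\in(-3,-\sqrt5)$, the fact that $|R(\mu_*)|>1$ strictly would send $|R|^{m+1}\to\infty$ against a bounded right-hand side in $(\ast)$, unless $P(\mu_*)=-1$; hence $\mu_*=(1-\sqrt{33})/2$. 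I would exclude the endpoints separately: $-3$ cannot be an eigenvalue since $\de_n$ contains triangles and is thus not bipartite, and the band-edge $-\sqrt5$ is ruled out by the quantitative asymptotics $|R|-1\sim c_1\sqrt{|\la+\sqrt5|}$ and $|P|\sim c_2/\sqrt{|\la+\sqrt5|}$ as $\la\to-\sqrt5^-$, which are incompatible with $(\ast)$ in any shrinking neighbourhood for large $m$. This forces every eigenvalue in $[-3,-\sqrt5]$ to converge to $(1-\sqrt{33})/2$; a separate intermediate-value argument applied to $R^{m+1}+\epsilon(P-1)/(P+1)$ across $(1-\sqrt{33})/2$ (for each large $m$ and suitable $\epsilon$) produces such an eigenvalue for each large $n$, yielding in particular $\lim_n\rho(\de_n)=(1-\sqrt{33})/2$. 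The chief obstacle is the band-edge exclusion at $-\sqrt5$; the interior convergence is a direct consequence of the key identity above.
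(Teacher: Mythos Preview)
Your route is essentially the paper's: both combine Lemma~\ref{lem:Recursion} with the symmetry of Remark~\ref{remark2} to reduce the question to a scalar equation in $\la$, analyse it on $(1,\sqrt5)$ and on $(-3,-\sqrt5)$, and treat the degenerate endpoint $\sqrt5$ by hand. The only real difference is where the symmetry is imposed: the paper matches the two \emph{middle} main components via $|a_{m/2}|=|a_{m/2+1}|$, while you match the two \emph{outermost} ones via $a_{m+1}=\epsilon a_0$; using $RS=1$ one checks these yield the same equation. Your factorisation $A^{2}-B^{2}=(\la+1)^{2}(\la-3)(\la^{2}-\la-8)$ is a clean way to locate $P=\pm1$ that the paper carries out only implicitly, and your band-edge discussion at $-\sqrt5$ is in fact more careful than the paper's.

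There is one genuine gap. Your intermediate-value step for producing an eigenvalue near $(1-\sqrt{33})/2$ presupposes that $(\ast)$ is also \emph{sufficient} for $\la$ to be an eigenvalue, whereas you have derived it only as necessary: the right end block imposes \emph{two} eigen-equations (one for each of its size-$2$ cells), and $a_{m+1}=\epsilon a_0$ by itself encodes only one of them. Sufficiency does hold---the full right-boundary condition works out to $a_0 a_m=a_1 a_{m+1}$, which collapses exactly to $R^{m+1}=\pm(P-1)/(P+1)$ once one uses $RS=1$---but this needs a separate computation you have not supplied. The paper avoids the issue altogether by obtaining $\rho(\de_n)<-\sqrt5$ from interlacing with the induced subgraph on the first seven blocks, which is shorter and bypasses sufficiency entirely. (A smaller point: non-bipartiteness rules out $-3$ as an eigenvalue but not as a \emph{limit} of eigenvalues; since $|R(-3)|>1$ and $P(-3)=0\ne-1$, however, your interior argument already covers $\la_*=-3$.)
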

 \begin{proof}
 Let  $\lambda \notin \lbrace -1, 0 \rbrace$ be an eigenvalue of  $\de_n$.
   So by the proof of Theorem~\ref{thm:simplecubic},  $\lambda$  has an eigenvector $\x$ that is constant on each cell of $\Pi$.
   The main components of $\x$ by Lemma~\ref{lem:Recursion}  are given by \eqref{eq:PQRS} and furthermore by Remark~\ref{remark2}   satisfy \eqref{eq:pm}.
   The proof involves showing that these two conditions simultaneously  for $\la\in(1,\sqrt{5}]$ do not hold and for $\la\in\left[-3,-\sqrt{5}\right]$ hold only if  $\la\to(1-\sqrt{33})/2$ as $n$ grows.

Since $\x$  satisfies \eqref{eq:pm}, $|a_{\frac{m}{2}}|=|a_{{\frac{m}{2}}+1}|$ if $m$ is even and
$|a_{\frac{m-1}{2}}|=|a_{\frac{m+3}{2}}|$ if $m$ is odd.
Suppose that $m=2l$ is even (if $m$ is odd, then  the argument is the same).
From \eqref{eq:PQRS}, we have
$$(P+1)R^{l}-(P-1)S^{l} =\pm( (P+1)R^{l+1}-(P-1)S^{l+1}).$$
Therefore, we have either
$$(P+1)R^{l}(1- R)=(P-1)S^{l}(1-S),\quad\text{or}\quad
(P+1)R^{l}(1+R)=(P-1)S^{l}(S+ 1).$$
We observe that $R$, $P-1$, and $1\pm R$ do not vanish as $\lambda \in I$.
Hence we obtain
\begin{equation}\label{eq:2eq}
\frac{P+1}{P-1}=\left(\frac{S}{R}\right)^{l} \frac{1-S}{1-R},\quad
  \text{or}\quad
  \frac{P+1}{P-1}=\left(\frac{S}{R}\right)^{l} \frac{1+S}{1+R}.
\end{equation}
Also since $\lambda \in I$, we see that
\begin{equation}\label{eq:RS}
0<\frac{S}{R}<1,\quad 0< \frac{1-S}{1-R}<1, ~\text{and}~ -1< \frac{1+S}{1+R}<0.
\end{equation}
Thus by \eqref{eq:2eq}, we have $\frac{P+1}{P-1}<1$.
On the other hand,
by direct computation using \eqref{eq:P} we observe that
for $\lambda \in (1,\sqrt{5})$, one has $\frac{P+1}{P-1}>1$. This implies that $\de_n$ has no eigenvalue in the interval $(1,\sqrt{5})$.
  Now let  $\lambda \in (-3,-\sqrt{5})$. From \eqref{eq:RS},  we have
 $\left(\frac{S}{R}\right)^{l}\frac{1\pm S}{1\pm R}\to 0$, as $l$ tends to infinity.
 So, by \eqref{eq:2eq},  we must have
 $P+1\to 0$, as $n$ tends to infinity. From \eqref{eq:P}, it runs out that
 $\left( \lambda-3 \right)  \left( {\lambda}^{2}-\lambda-8 \right)  \left( \lambda+1
 \right)^2
\to 0$. As $-3<\la<-\sqrt5$, this occurs only if $\lambda \to \frac{1-\sqrt{33}}{2}$ as $n$ tends to infinity.

To establish that $\lim_{n\to \infty}\rho(\de_n)=(1-\sqrt{33})/2$, it suffices to show that $\rho(\de_n)<-\sqrt5$. To see this, let $H$ be the induced subgraph on the first seven blocks of $\de_n$. It is seen that
 $\rho(H)<-\sqrt5$. Thus we are done by interlacing.

 It only remains to show that $\pm\sqrt5$  are not  eigenvalues of $\de_n$. For a contradiction,
 let $\lambda=\sqrt{5}$ be an eigenvalue with eigenvector $\x$. The components of $\x$ satisfy the   recurrence relation  \eqref{eq:Recursion} with  $k=-2$, $a_0=(\sqrt5-1)/2$ and $a_1=-(\sqrt5+3)/2$. It follows that
 $$a_j=\frac{(-1)^j}{2}\left(4j+\sqrt5-1\right).$$
 As before, we  may assume that $m=2l$ is even and so $|a_{l}|=|a_{l+1}|$.
If $a_{l}=a_{l+1}$, then $(-1)^{l}(4l+\sqrt5+1)=0$ and if $a_{l}=-a_{l+1}$, then $2(-1)^{l+1}=0$.
Both of which lead to contradictions, and so $\sqrt{5}$ is not an eigenvalue of $\de_n$. A similar argument works for $-\sqrt{5}$.
The proof is now complete.
 \end{proof}

We conjecture that the gap interval above is indeed maximal.
\begin{conjecture}\rm
	$(1,\sqrt{5}]$ is a maximal gap interval for the sequence $\de_n$.
\end{conjecture}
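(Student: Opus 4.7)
The plan is to establish maximality by showing that eigenvalues of $\de_n$ accumulate at both endpoints of the gap, namely $\lambda = 1$ and $\lambda = \sqrt{5}$. Concretely, for every $\epsilon > 0$ one must produce $n$ such that $\de_n$ has an eigenvalue in $(\sqrt{5}, \sqrt{5}+\epsilon)$ and an eigenvalue in $(1-\epsilon, 1)$. The natural framework is the Floquet--Bloch analysis of Section~\ref{sec:Floque-Bloch}, since $\de_n$ is a length-$m$ chunk of the periodic graph $\bde$ whose spectrum consists of the bands $[-\sqrt{5},-1]$, $[-1,1]$, and $[\sqrt{5},3]$; these are precisely the intervals whose closures one expects the eigenvalues of $\de_n$ to fill as $m \to \infty$.

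To make this precise, I would combine the recurrence \eqref{eq:Recursion} for the main components of a first-type eigenvector with the two boundary conditions imposed by the end blocks to obtain an exact transcendental equation for the first-type eigenvalues of $\de_n$. For $\lambda$ in a band, the roots $R, S$ of the characteristic polynomial $x^2 - kx + 1 = 0$ are complex conjugates lying on the unit circle, so writing $R = e^{i\phi(\lambda)}$ the eigenvalue condition reduces to an equation of the form $m \phi(\lambda) + \psi(\lambda) \equiv 0 \pmod{\pi}$, where $\psi$ encodes the phase contributed by the two end blocks. As $m \to \infty$ the solutions of such an equation become dense in any subinterval on which $\phi$ is continuous and non-constant. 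Applying this to the band $[\sqrt{5}, 3]$ and to $[-1,1]$ yields the desired accumulation at $\sqrt{5}$ from the right and at $1$ from the left.

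The main obstacle is rigorously controlling the behavior near the band edges $\lambda = 1$ and $\lambda = \sqrt{5}$, where the characteristic roots $R, S$ coalesce and the recurrence degenerates (note that these exact values are excluded in Lemma~\ref{lem:Recursion} and are already shown in Theorem~\ref{thm:GapCubic} to be non-eigenvalues of $\de_n$). Near the edges the phase function $\phi(\lambda)$ vanishes with a square-root singularity, so one must verify that the transcendental equation still admits solutions in $(\sqrt{5}, \sqrt{5}+\epsilon)$ for all sufficiently large $m$; this is exactly the kind of bulk--boundary correspondence issue the authors allude to, and in principle boundary-induced modes could suppress accumulation at one edge. A secondary difficulty is computing the boundary phase $\psi(\lambda)$ explicitly from the end-block structure in Figure~\ref{fig:mincubic} and confirming that it is bounded and smooth on the relevant bands so that it does not cancel the dominant $m\phi$ term near the edges.

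An alternative approach worth pursuing in parallel is a direct interlacing argument via Lemma~\ref{lem:interlacing}: one would look for a sequence of induced subgraphs $H_k$ of $\de_{n_k}$ (for suitable $n_k \to \infty$) whose spectra can be computed or controlled and which contain eigenvalues tending to $1$ and to $\sqrt{5}$ respectively. Natural candidates are short finite subchains of the repeating middle-block structure; combined with the non-existence result of Theorem~\ref{thm:GapCubic}(ii), this would sandwich the eigenvalues of $\de_n$ against the gap endpoints without requiring fine analysis of the boundary phase.
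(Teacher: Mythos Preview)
This statement is presented in the paper as an open \emph{conjecture}, not a theorem; the paper provides no proof. So there is no ``paper's own proof'' to compare against, and your proposal is an attempt to settle something the authors explicitly left open.

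As a plan, your phase-analysis approach is the natural one and is essentially correct in spirit: inside each band the characteristic roots $R,S$ of the recurrence \eqref{eq:Recursion} lie on the unit circle, the eigenvalue condition becomes $m\phi(\lambda)+\psi(\lambda)\equiv 0\pmod{\pi}$, and density of solutions in the band interior is routine. You have also correctly identified the genuine difficulty, namely the square-root degeneration of $\phi$ at the band edges $\lambda=1$ and $\lambda=\sqrt{5}$. What is missing is not just ``controlling'' this degeneration but actually carrying out the edge analysis: you need to show that for each large $m$ the equation has a root with $\phi(\lambda)\sim c/m$, i.e.\ $\lambda-\sqrt{5}\sim c'/m^{2}$, and that the boundary phase $\psi$ (which you would have to compute from the specific end block of $\de_n$) does not conspire to push that first root away by a fixed amount. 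This is exactly the bulk--boundary issue the authors flag, and it is where a purported proof would have real content; as written your proposal is a plausible outline, not a proof.

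Your alternative interlacing route has a more concrete gap. Interlacing (Lemma~\ref{lem:interlacing}) tells you that if an induced subgraph $H$ has an eigenvalue $\mu>\sqrt{5}$ then some eigenvalue of $\de_n$ is at least $\mu$; it does \emph{not} tell you that $\de_n$ has an eigenvalue close to $\mu$. So producing subgraphs with eigenvalues near $\sqrt{5}$ does not by itself force eigenvalues of $\de_n$ to accumulate at $\sqrt{5}$ from above. To make an interlacing argument work you would need two-sided control, for instance by also exhibiting $\de_n$ as an induced subgraph of a larger graph whose relevant eigenvalue is only slightly above $\sqrt{5}$, or by a counting argument showing that $\de_n$ must have many eigenvalues in $[\sqrt{5},3]$ and then using the band width to force some of them near the lower edge.
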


\begin{remark}\rm
	A referee asked  what the eigenfunction of the eigenvalues  converging to $(1-\sqrt{33})/2$ look like. Such eigenvalues are interesting as they lie in the gap of the infinite-graph spectrum.
Here we only consider the smallest eigenvalue, namely	$\rho=\rho(\de_n)$.  From  \eqref{eq:lambda3cubic}, we can obtain the following lower bound on the square of the first component of  a unit eigenvector for $\rho$:
$$x_1^2\ge\frac{-2(\rho^3+\rho+14)}{(\rho-1)^2(\rho^2-\rho-4)^2}.$$
As $\rho\to(1-\sqrt{33})/2$, we can conclude that $x_1^2>0.018$ for large enough $n$.
This shows that the components on the end blocks are of constant magnitude. In contrast, the components on the middle blocks (far away enough from the end blocks) seem to decay exponentially as $n$ grows; this can be justified using Lemma~\ref{lem:Recursion} but we do not pursue that here  (it can also be justified by more sophisticated but more general methods surveyed in \cite{aopBook}).
In Figure~\ref{fig:4262}, we plotted the (line charts of) unit eigenvectors for $\rho(\de_n)$ for few values of $n$, where the above properties of the components on end blocks and middle blocks can be noticed.
\end{remark}	
\begin{figure}
$$\begin{array}{ccccccc}
	\includegraphics[width=3.3cm]{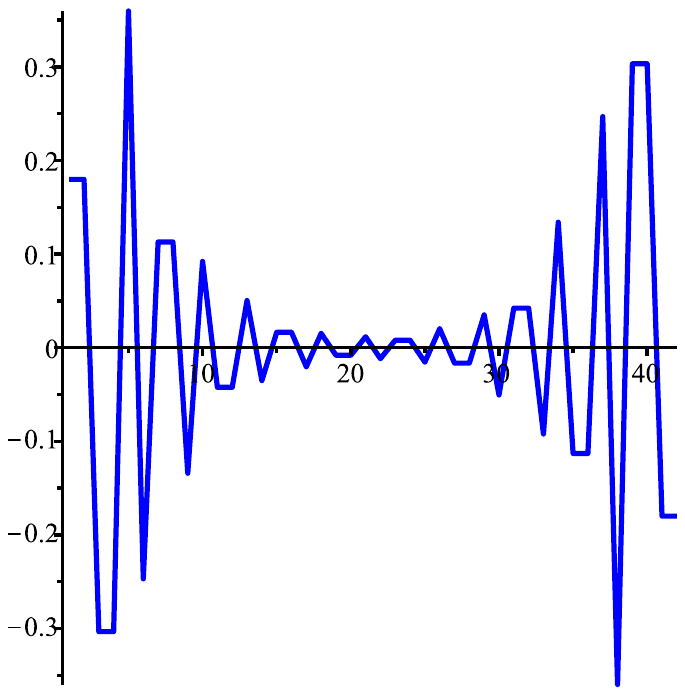}&&
	\includegraphics[width=3.3cm]{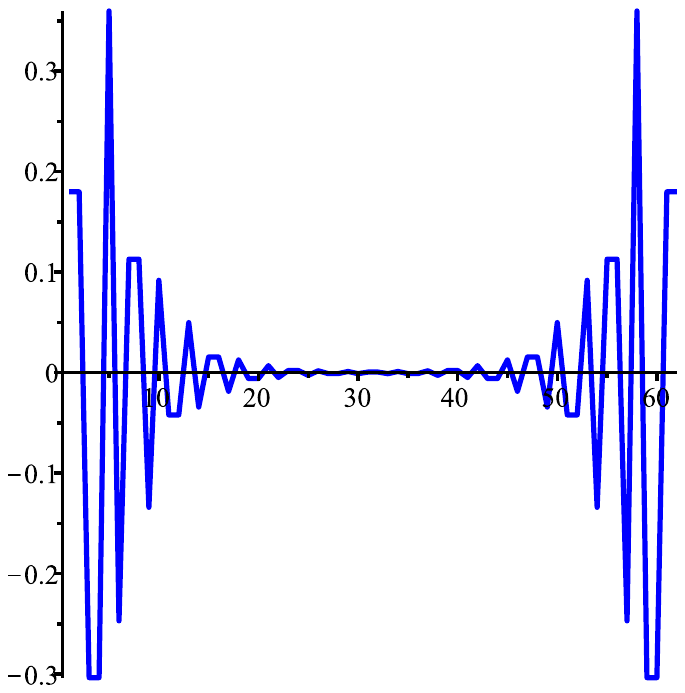}&&
	\includegraphics[width=3.3cm]{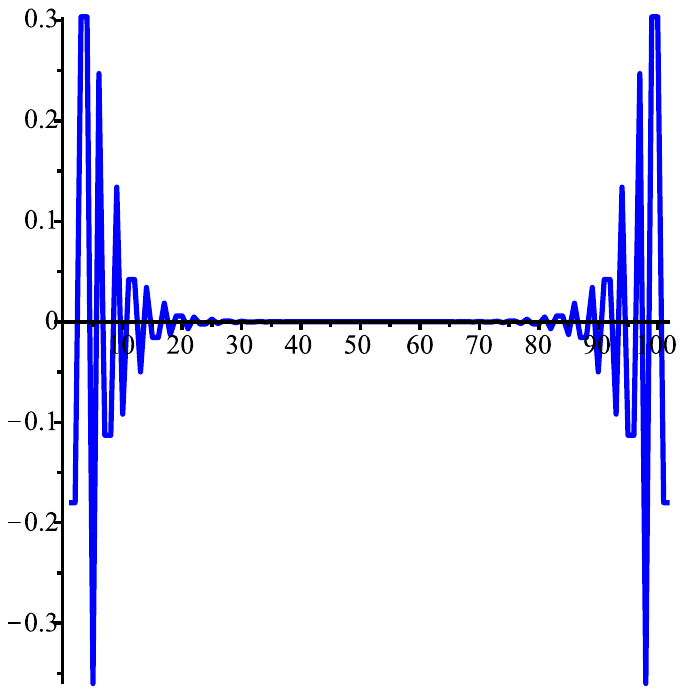}&&
	\includegraphics[width=3.3cm]{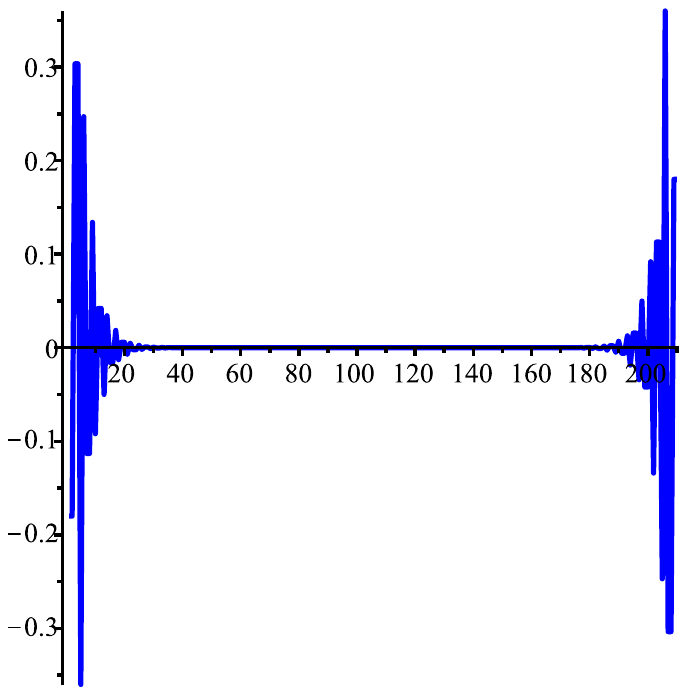}
\end{array}$$
\caption{The line charts of the components of unit eigenvectors for $\rho(\de_n)$ for $n=42,62,102,210$; the horizontal axis indicates the labels of vertices of $\de_n$  from left to right}
\label{fig:4262}
\end{figure}

\subsection{Gap intervals for the spectrum of $\ga_n$}
 \begin{remark} \label{remark3}\rm
 Let  $\lambda \notin\Lambda$ (see Theorem~\ref{thm:simple}) be an eigenvalue of  $\ga_n$ with eigenvector $\x=(x_1, \ldots, x_n)$.
By the symmetry of $\ga_n$, $\x'=(x_n, \ldots, x_1)$  is also an eigenvector for $\lambda$.
Since   $\lambda$ is simple (by Theorem~\ref{thm:simple}), it follows that  $\x=\pm\x'$, that is
\begin{equation}\label{eq:pm2}
 |x_i|=|x_{n-i+1}|, ~{\text{for}}~i=1, \ldots, n.
\end{equation}
 \end{remark}

We first establish a lower bound on the smallest eigenvalues of $\ga_n$.
\begin{theorem} \label{thm:Gapquartic1}
If $n\geq 11$, then   $\rho(\ga_n)\geq -1-\sqrt{3}$.
\end{theorem}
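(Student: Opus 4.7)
The plan is to mirror the structure of the proof of Theorem~\ref{thm:Gapcubic1}, adapting it to the quartic graphs $\ga_n$, which have $5$-vertex periodic blocks containing a $K_4$ on the four ``interior'' vertices and two triangles at the cut vertices. Set $G := \ga_n$ and $\rho := \rho(G)$. For the few small values $n \in \{11,16,21\}$, verify $\rho \geq -1-\sqrt{3}$ by direct computation; assume $n \geq 26$ from here on. Since $G$ is not a disjoint union of complete graphs, $\rho < -1$. Every element of the set $\Lambda$ in Theorem~\ref{thm:simple} is at least $-2 > -1-\sqrt{3}$, so we may assume $\rho \notin \Lambda$. Then $\rho$ is simple (Lemma~\ref{lem:1stTypeQua}), has a first-type unit eigenvector $\x$, and $|x_i|=|x_{n-i+1}|$ by Remark~\ref{remark3}.

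Next I would construct an edge clique cover of $\ga_n$: in each middle block $M$, take the $K_4$ on the four interior vertices and the two triangles $\{r,r_1,r_2\}$, $\{r_3,r_4,r_5\}$; for each $D_1$ end block, take a small cover by $K_4$'s together with the two pendant edges to the next cut vertex. If $N$ is the vertex–clique incidence matrix, a direct count of clique memberships gives
\[
NN^\top = d\,I + A(G) + B + C,
\]
where $d=2$ on most vertices, $B$ is the adjacency matrix of the few doubly-covered edges (namely the two ``waist'' edges inside each $K_4$), and $C$ is a diagonal correction supported on a bounded number of vertices inside the $D_1$ blocks. Since $(\x N)(\x N)^\top \geq 0$, this yields
\[
\rho \;\geq\; -d \;-\; 2\!\sum_{ij\in E(B)} x_i x_j \;-\; \x C\x^\top.
\]

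I would then bound the $B$-sum in terms of boundary data. The eigen-equation inside a middle block lets me solve for the four interior components in terms of $x_r, x_t$ at the two flanking cut vertices, yielding $x_s^2 \leq k(\rho)(x_r^2+x_t^2)$ on each interior cell for an explicit $k(\rho)$. Summing over middle blocks, using $\|\x\|=1$ and the symmetry $|x_i|=|x_{n-i+1}|$ to express the sum of squares on cut vertices as $\tfrac12$ minus contributions from the two $D_1$ end blocks, reduces everything to a bound of the form $\sum_{ij\in E(B)}x_ix_j \leq \Phi(\rho, x_1^2)$ with $\Phi$ a rational function. A second application of the eigen-equation, this time propagating from vertex $1$ through the $D_1$ block, writes the first $\sim 10$ components as explicit polynomials in $\rho$ times $x_1$, so the partial norm identity $\|\x\|^2=1$ provides an upper bound $x_1^2 \leq 1/F(\rho)$ for an explicit polynomial $F$ (positive on the relevant range, checked by the discriminant/real-root test as in \eqref{eq:lambda4cubic}).

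Combining these bounds reduces the claim to a single rational inequality $g(\rho)/h(\rho) > 0$ with explicit polynomials $g,h$; the polynomial $h$ is shown to be positive on all of $\mathbb{R}$ (or at least on the relevant range), and a real-root analysis of $g$ shows its smallest real root exceeds $-1-\sqrt{3}$, forcing $\rho > -1-\sqrt{3}$. The main obstacle I anticipate is Step~2: because $D_1$ contains $K_5$ minus an edge and is ``denser'' than the end blocks in the cubic case, the clique cover must be chosen with care to keep $B$ small and $C$ concentrated on very few vertices; otherwise the resulting polynomial $g$ becomes unwieldy and may have extra real roots in the critical window $(-1-\sqrt{3},-2)$. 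Getting a clean cover (ideally with at most one or two corrections to $C$) and then carrying out the polynomial sign analysis in the final step are where most of the effort will go.
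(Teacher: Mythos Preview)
Your plan is essentially the paper's own proof: edge clique cover, $NN^\top = 2I + A + B + C$, bound the $B$-sum via the eigen-equation on middle blocks, express the boundary components as polynomials in $\rho$ times $x_1$, and finish with a polynomial sign analysis. The clique cover you describe (one $K_4$ plus two $K_3$'s per middle block, doubly covering the two ``waist'' edges; a $K_4$ plus $K_2$'s on each $D_1$) is exactly what the paper uses, and the diagonal correction $C$ lands on vertices $5,6,n-5,n-4$.

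One point you should be aware of, since it does \emph{not} arise in the cubic proof you are mirroring: after substituting $x_4,x_6$ in terms of $x_1$, the resulting lower bound on $\rho$ has the form
\[
\rho \;\ge\; -2 + \frac{f(\rho)\,x_1^2 - (\text{positive})}{(\text{positive})},
\]
and the polynomial $f$ is \emph{not} negative on all of $(-4,-1)$; its smallest real zero is about $-2.034$. So the upper bound $x_1^2 \le 1/F(\rho)$ only pushes the inequality in the right direction once you know $f(\rho)<0$, i.e.\ once you know $\rho < -2.034$ a priori. The paper handles this by a quick interlacing argument (the induced subgraph on the first four blocks already has smallest eigenvalue below $-2.601 < -1-\sqrt{2}$), which also disposes of the $\rho\notin\Lambda$ hypothesis in one stroke. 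Your bound $\rho<-1$ from ``not a union of cliques'' is not enough here, so build in this interlacing step before the polynomial analysis. The clique cover itself, which you flagged as the main obstacle, turns out to be the easy part.
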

\begin{proof}
For convenience, let us set $G:=\ga_n$ and $\rho:=\rho(G)$.
If $n=11, 16,$ or $21$, then by direct computation we see that $\rho \geq -1-\sqrt{3}$.
So we suppose that $n\geq 26$.
Note that $\rho<-2.601<-1-\sqrt2$. This follows from interlacing and the fact that the smallest eigenvalue of the induced subgraph  on the first four blocks of $G$ is less than
 $-2.601$. So, by Theorem~\ref{thm:simple}, $\rho $ has a unit eigenvector $\x$ that is constant on each cell of $\Pi$ and by  Remark~\ref{remark3}, it satisfies \eqref{eq:pm2}.  The proof goes along the same line as the proof of Theorem~\ref{thm:Gapcubic1}
by first decomposing $A(G)$ as the sum of a positive definite matrix
and two other `simpler' matrices. This enable us to obtain an inequality in terms of a rational function in $\rho$ from which  the desired lower bound on $\rho$ will be derived. 

Consider a set of cliques ($K_2, K_3,$ and $K_4$) of $G$ that covers the edges of $G$,  as shown in Figure~\ref{fig:clique}.
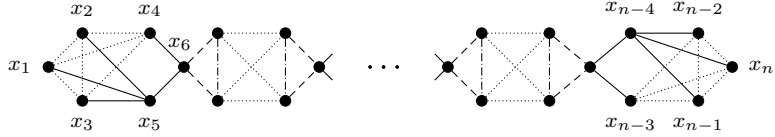
\begin{figure}[H]
\centering
\begin{tikzpicture}[scale=.9]
	\vertex[fill] (1) at (0,0) [label=left:\scriptsize{$x_1$}]{};
	\vertex[fill] (2) at (.5,.5) [label=above:\scriptsize{$x_2$}] {};
	\vertex[fill] (3) at (.5,-.5) [label=below:\scriptsize{$x_3$}] {};
	\vertex[fill] (4) at (1.5,.5) [label=above:\scriptsize{$x_4$}] {};
    \vertex[fill] (5) at (1.5,-.5) [label=below:\scriptsize{$x_5$}] {};
    \vertex[fill] (6) at (2,0) [label=above:\scriptsize{$x_6$\ \ }] {};
    \vertex[fill] (7) at (2.5,.5) [] {};
    \vertex[fill] (8) at (2.5,-.5) [] {};
    \vertex[fill] (9) at (3.5,.5) [] {};
    \vertex[fill] (10) at (3.5,-.5) [] {};
    \vertex[fill] (11) at (4,0) [] {};
    \vertex[fill] (22) at (5.9,0) [] {};
    \vertex[fill] (23) at (6.4,.5) [] {};
    \vertex[fill] (24) at (6.4,-.5) [] {};
    \vertex[fill] (25) at (7.4,.5) [] {};
    \vertex[fill] (26) at (7.4,-.5) [] {};
    \vertex[fill] (27) at (8,0) [] {};
    \vertex[fill] (28) at (8.6,.5) [label=above:\scriptsize{$x_{n-4}$}] {};
    \vertex[fill] (29) at (8.6,-.5) [label=below:\scriptsize{$x_{n-3}$}] {};
    \vertex[fill] (30) at (9.6,.5) [label=above:\scriptsize{$x_{n-2}$}] {};
    \vertex[fill] (31) at (9.6,-.5) [label=below:\scriptsize{$x_{n-1}$}] {};
    \vertex[fill] (32) at (10.1,0) [label=right:\scriptsize{$x_{n}$}] {};
    \tikzstyle{vertex}=[circle, draw, inner sep=0pt, minimum size=0pt]
    \vertex[fill] (12) at (4.2,.2)[] {};
    \vertex[fill] (13) at (4.2,-.2)[] {};
     \vertex[fill] (20) at (5.7,.2)[] {};
     \vertex[fill] (21) at (5.7,-.2)[] {};
    \tikzstyle{vertex}=[circle, draw, inner sep=0pt, minimum size=1pt]
    \vertex[fill] (15) at (4.75,0)[]{} ;
    \vertex[fill] (17) at (4.95,0)[]{} ;
    \vertex[fill] (19) at (5.15,0)[]{} ;
	\path[densely dotted]
		(1) edge (2)
		(1) edge (3)
	    (1) edge (4)
	    (2) edge (3)
		(2) edge (4)
	    (3) edge (4);
	    \path
		(1) edge (5)
	    (2) edge (5)	
	    (3) edge (5)
	    (4) edge (6)
	    (5) edge (6);
	 \path[densely dashed]
	    (6) edge (7)
	    (6) edge (8);
	  \path[densely dashdotted]
	    (7) edge (8);
	\path[densely dotted]
	    (7) edge (9)
	    (7) edge (10)
	    (8) edge (9)
	    (8) edge (10);
	  \path[densely dashdotted]
	    (9) edge (10);
	\path[densely dashed]
	    (9) edge (11)
	   (10) edge (11);
	   \path
	   (11) edge (12)
	   (11) edge (13)
	   (20) edge (22)
	   (21) edge (22);
	  \path[densely dashed]
	   (22) edge (23)
	   (22) edge (24);
	 \path[densely dashdotted]
	   (23) edge (24);
	\path[densely dotted]
	   (23) edge (25)
	   (23) edge (26)
	   (24) edge (25)
	   (24) edge (26);
	   \path[densely dashdotted]
	   (25) edge (26);
	    \path[densely dashed]
	    (25) edge (27)
	    (26) edge (27);
	    \path
	   (27) edge (28)
	   (27) edge (29)
	   (28) edge (30)
	    (28) edge (31)
	    (28) edge (32);
	  \path[densely dotted]
	   (29) edge (30)
	   (29) edge (31)
	   (29) edge (32)
	    (30) edge (31)
	    (30) edge (32)
	   (31) edge (32) ;
\end{tikzpicture}
\caption{The edge clique cover of $\ga_n$: each normal line indicates a $K_2$, each dashed triangle is a $K_3$,  each dotted tetrahedral is a $K_4$; dash-dotted edges are covered by a $K_3$ and a $K_4$.} \label{fig:clique}
\end{figure}
Let $H$ be the spanning subgraph of $G$ induced by the edges that are covered by two cliques, see Figure~\ref{fig:H}.
 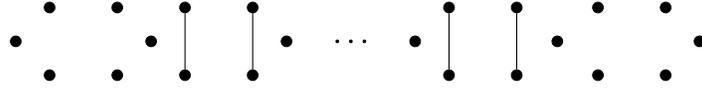
\begin{figure}[h!]
\centering
\begin{tikzpicture}[scale=.9]
	\vertex[fill] (1) at (0,0) []{};
	\vertex[fill] (2) at (.5,.5) [] {};
	\vertex[fill] (3) at (.5,-.5) [] {};
	\vertex[fill] (4) at (1.5,.5) [] {};
    \vertex[fill] (5) at (1.5,-.5) [] {};
    \vertex[fill] (6) at (2,0) [] {};
    \vertex[fill] (7) at (2.5,.5) [] {};
    \vertex[fill] (8) at (2.5,-.5) [] {};
    \vertex[fill] (9) at (3.5,.5) [] {};
    \vertex[fill] (10) at (3.5,-.5) [] {};
    \vertex[fill] (11) at (4,0) [] {};
    \vertex[fill] (22) at (5.9,0) [] {};
    \vertex[fill] (23) at (6.4,.5) [] {};
    \vertex[fill] (24) at (6.4,-.5) [] {};
    \vertex[fill] (25) at (7.4,.5) [] {};
    \vertex[fill] (26) at (7.4,-.5) [] {};
    \vertex[fill] (27) at (8,0) [] {};
    \vertex[fill] (28) at (8.6,.5) [] {};
    \vertex[fill] (29) at (8.6,-.5) [] {};
    \vertex[fill] (30) at (9.6,.5) [] {};
    \vertex[fill] (31) at (9.6,-.5) [] {};
    \vertex[fill] (32) at (10.1,0) [] {};
    \tikzstyle{vertex}=[circle, draw, inner sep=0pt, minimum size=1pt]
    \vertex[fill] (15) at (4.75,0)[]{} ;
    \vertex[fill] (17) at (4.95,0)[]{} ;
    \vertex[fill] (19) at (5.15,0)[]{} ;
	\path
	    (7) edge (8)
	    (9) edge (10)
	    (23) edge (24)
	    (25) edge (26);
\end{tikzpicture}
\caption{The subgraph $H$ induced by the edges covered by two cliques} \label{fig:H}
\end{figure}

 Let $M$ be the vertex-clique incidence matrix of $G$. Also let $A$ and $B$ be  the adjacency matrices of $G$ and $H$, respectively, and $C$ be the matrix whose entries are all zero except for $C_{5, 5}=C_{n-4, n-4}=2$ and $C_{6, 6}=C_{n-5, n-5}=1$.
  Note that the $(i,j)$-entry of $MM^\top$ is equal to the inner product of the two rows of $M$ corresponding to the vertices $i$ and $j$. Each vertex of $G$ belongs to two cliques of the clique covering except for the vertices $5$ and $n-4$ that belong to four cliques and the vertices $6$ and $n-5$ that belong to three cliques.
   This implies that the diagonal entries of $MM^\top$ and $2I+C$ are the same. Now, if $ij \notin  E(G)$, then
   $i$ and $j$  together do not appear in any clique. If $ij \in E(G)\backslash E(H)$, then $i$ and $j$ appear in exactly one clique together and if $ij \in E(H)$, they appear in two cliques together. From this argument, it follows that the off-diagonal entries of $MM^\top$ coincide with those of $A+B$.  So
$$MM^\top=2I+A+B+C.$$
Therefore,
$$\rho=\x A\x^\top=(\x M)(\x M)^\top-2-\x B\x^\top-\x C\x^\top.$$
We have $\x B\x^\top=2\sum_{ij\in E(H)}x_ix_j$ and since $|x_5|=|x_{n-4}|$ and $|x_6|=|x_{n-5}|$,
$\x C\x^\top=4x_5^2+2x_6^2$. It follows that
\begin{equation}\label{eq:lambda}
\rho \geq -2-2\sum_{ij\in E(H)}x_ix_j-4x_5^2-2x_6^2.
\end{equation}
Note that when $ij\in E(H)$, then $x_ix_j=x_i^2$.
Suppose that $x_r, x_s, x_t,$ and $x_u$ are the components of $\x$ on the cells of a middle block of $G$, as depicted in Figure~\ref{fig:1middlequartic}.
\begin{figure}[H]
\centering
\begin{tikzpicture}[scale=0.9]
	\vertex[fill] (r) at (0,0) [label=above:\footnotesize{$x_r$}] {};
	\vertex[fill] (r1) at (.5,.5) [label=above:\footnotesize{$x_s$}] {};
	\vertex[fill] (r2) at (.5,-.5) [] {};
	\vertex[fill] (r3) at (1.5,.5) [label=above:\footnotesize{$x_t$}] {};
	\vertex[fill] (r4) at (1.5,-.5) [] {};
    \vertex[fill] (r5) at (2,0) [label=above:\footnotesize{$x_u$}] {};
	\path
		(r) edge (r1)
		(r) edge (r2)
		(r1) edge (r2)
		(r1) edge (r3)
        (r1) edge (r4)
		(r2) edge (r3)
	    (r2) edge (r4)
	    (r3) edge (r4)
	    (r5) edge (r4)
	    (r3) edge (r5);
\end{tikzpicture}
\caption{A middle block of  $G$ and the components of $\x$} \label{fig:1middlequartic}
\end{figure}
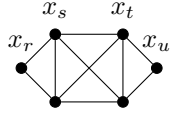

By using the eigen-equation on the vertices $s$ and $t$, we have
$$x_s=\frac{(\rho-1)x_r+2x_u}{\rho^2-2\rho-3},\quad \text{and} \quad x_t=\frac{(\rho-1)x_u+2x_r}{\rho^2-2\rho-3}.$$
It turns out that $x_s^2+x_t^2\leq k(x_r^2+x_u^2)$ in which $k=\frac{2(\rho-1)^2+8}{(\rho^2-2\rho-3)^2}$. Let $S$ be the set of  cut vertices of $G$. Then
  \begin{align*}
  \sum_{ij\in E(H)}x_ix_j &\leq 2k\sum_{i\in S} x_i^2- 2kx_6^2  \\
  &=2k\left(1-2\sum_{ij\in E(H)}x_ix_j-6x_1^2-4x_4^2\right)-2kx_6^2,
  \end{align*}
where the last equality follows from the fact that
  $$1=\|\x\|=2 \sum_{ij\in E(H)}x_ix_j+\sum_{i\in S} x_i^2+6x^2_1+4x^2_4.$$
 Therefore,
  \begin{align} \label{eq:lambdaa}
  \sum_{ij\in E(H)}x_ix_j \leq \frac{2k\left(1-6x_1^2-4x_4^2-x_6^2\right)}{1+4k}.
  \end{align}
Combining \eqref{eq:lambda} and \eqref{eq:lambdaa}, we get
  $$\rho\geq -2-\frac{4k(1-6x_1^2-4x_4^2-x_6^2)}{1+4k}-4x_4^2-2x_6^2.$$
Using the eigen-equation, we can write $x_4,x_6$ in terms of $x_1$ as
  \begin{equation*}
  x_4=\frac{1}{2}(\rho -2)x_1, \quad\text{and}\quad x_6=\frac{1}{2}(\rho^2-2\rho-6)x_1.
  \end{equation*}
We substitute these and the value of $k$ in terms of $\rho$ in the last inequality to obtain
  \begin{equation}\label{eq:lambdaaa}
  \rho \geq-2+\frac{f(\rho)x_1^2-16\rho^2+32
\rho-80}{2\rho^4-8\rho^3+12\rho^2-8\rho+98},
\end{equation}
where
$$f(t)=-t^8+8t^7-12t^6-36t^5+27t^4+236t^3+254t^2-
1056t-636.$$
The smallest zero of $f$ is larger than $-2.034$.
As $\rho<-1-\sqrt2$,  $f(\rho)$, i.e. the coefficient of $x_1^2$ in \eqref{eq:lambdaaa}, is negative.
On the other hand, using the facts that $n\geq 26$, $\|\x\|=1$,  $x_i^2=x_{n-i+1}^2$  for $i=1, \ldots, n$,
 and  $\x$ being constant on each cell of $\Pi$, we can obtain that
 \begin{equation}\label{eq:x1x12}
 3x_1^2+2x_4^2+x_6^2+2x_7^2+2x_9^2+x_{11}^2+2x_{12}^2\leq\frac{1}{2}.
 \end{equation}
We can also write  $x_7, x_9, x_{11},$ and $x_{12}$ in terms of $x_1$ by using the eigen-equation.
 Plugging in all these into \eqref{eq:x1x12} yields
\begin{equation}\label{eq:lambdaaaa}
x_1^2\le\frac{64}{g(\rho)},
\end{equation}
where
$$g(t)=t^{12}-8t^{11}-4t^{10}+148t^9-103t^8-1020t^7+714t^6
+3024t^5-960t^4-2464t^3+1024t^2+3712.$$
Now from \eqref{eq:lambdaaa} and \eqref{eq:lambdaaaa}, we come up with
\begin{equation}\label{eq:h/g}
\frac{h(\rho)}{g(\rho)\left(\rho
	^4-4\rho^3+6\rho^2-4\rho+49 \right)}>0,\end{equation}
where
\begin{align*}
h(t)&=	t^{17}-\!10t^{16}+\!10t^{15}+\!188t^{14}-\!494t^{13}-\!1236t^{12}+\!4124t^{11}+\!5136t^{10}\!-6907t^9 -\!34850t^8\\
	&\quad-\!66910t^7+\!162036t^6+\!
	356704t^5-\!185984t^4-\!329408t^3+\!192576t
	^2+\!126592t+\!532608.
\end{align*}
We observe that both $g(t)$ and $t^4-4t^3+6t^2-4t+49$ have no real zeros and so they are positive for any real $t$. So \eqref{eq:h/g} implies that $h(\rho)>0$.
Note that $h(t)$ has a unique real zero which is greater than
$-2.71$. This means that if
$t\le-2.71$, then $h(t)<0$. It follows that  $\rho > -2.71> -1-\sqrt3$.
\end{proof}

Now, we are ready to prove the main result of this subsection.
 \begin{theorem}
\begin{itemize}
\item[\rm(i)]  The eigenvalues $\ga_n$ lying in the interval $[-4,-(1+\sqrt{17})/2]$ converge to $1-\sqrt{13}$ as $n$ tends to infinity. In particular,
 $\underset{n\to \infty}{\lim}\rho(\ga_n)=1-\sqrt{13}$.
\item[\rm(ii)] $\ga_n$ has no eigenvalue in the interval $[(-1+\sqrt{17})/2,3]$.
\end{itemize}
 \end{theorem}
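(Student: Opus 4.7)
The plan is to mirror, step by step, the proof of Theorem~\ref{thm:GapCubic}, with the added computational complications coming from the fact that each middle block $M$ of $\ga_n$ is larger and carries more interior symmetry than a middle block of $\de_n$. Let $\la\notin\Lambda$ be an eigenvalue of $\ga_n$ with eigenvector $\x$. By Theorem~\ref{thm:simple}, $\la$ is simple, and by the proof of that theorem second-type eigenvalues are confined to $\Lambda$; hence $\x$ is of the first type, i.e.\ constant on each cell of the equitable partition $\Pi$. By Remark~\ref{remark3}, the components of $\x$ also satisfy $|x_i|=|x_{n-i+1}|$ for every $i$.

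The first substantive step is to derive the quartic analogue of Lemma~\ref{lem:Recursion}. Applying the eigen-equation across three consecutive middle blocks (with the vertices in each block labelled by the values of $\x$) and eliminating all non-main components in favour of the values on the two cells of size $2$ of each block, I would obtain a two-term linear recurrence
\[
a_i=k(\la)\,a_{i-1}-a_{i-2},\qquad i=2,\ldots,m+1,
\]
with $k(\la)$ a specific rational function of $\la$, together with initial conditions $a_0,a_1$ computed from the eigen-equation on the left end block $D_1$. Letting $R,S$ be the roots of $x^2-k(\la)x+1=0$, the solution takes the form
\[
a_j=C(\la)\bigl((P+1)R^j-(P-1)S^j\bigr),
\]
with $P=P(\la)$ an explicit algebraic expression in $\la$ fixed by the initial conditions.

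Next, I would exploit the symmetry $|a_{\lfloor m/2\rfloor}|=|a_{\lceil m/2\rceil+1}|$ coming from \eqref{eq:pm2} to obtain
\[
\frac{P+1}{P-1}=\Bigl(\frac{S}{R}\Bigr)^{\!l}\frac{1\pm S}{1\pm R},
\]
exactly as in \eqref{eq:2eq}. A case analysis on $\la$ then finishes the proof. For $\la\in\bigl((-1+\sqrt{17})/2,\,3\bigr)$, the value of $k(\la)$ lies in $(-2,2)$, so $R=\bar S$ sit on the unit circle; direct computation of the sign and modulus of $(P+1)/(P-1)$ versus $(1\pm S)/(1\pm R)$ will show the identity is impossible there, ruling out eigenvalues in the open interval. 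The two endpoints $(-1+\sqrt{17})/2$ and $3$ are excluded by a separate direct argument: substituting these values into the closed-form solution of the recurrence and checking against the boundary condition produces a contradiction, in the same spirit as the exclusion of $\pm\sqrt{5}$ at the end of Theorem~\ref{thm:GapCubic}. For $\la\in\bigl(-4,-(1+\sqrt{17})/2\bigr)$, one has $0<S/R<1$ while $(1\pm S)/(1\pm R)$ is bounded, so $(S/R)^l\to 0$ as $n\to\infty$, forcing $P(\la)+1\to 0$; the unique real zero of (the relevant factor of) $P+1$ in this interval is $1-\sqrt{13}$, giving the stated convergence. Finally, to guarantee that eigenvalues actually populate the interval (so that $\lim\rho(\ga_n)=1-\sqrt{13}$ rather than just $\liminf\rho(\ga_n)\ge 1-\sqrt{13}$), I would apply Lemma~\ref{lem:interlacing} to a fixed induced subgraph on a few initial blocks whose smallest eigenvalue is already less than $-(1+\sqrt{17})/2$, exactly as was done for $\de_n$.

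The main obstacle will be the explicit derivation of $k(\la)$ and $P(\la)$. The middle block $M$ has five interior cells (compared with three in the cubic case), so eliminating the non-main components to reduce to a clean two-term recurrence in the $a_i$ requires a careful, somewhat lengthy, but essentially mechanical application of the eigen-equation; in practice this is best done symbolically. A secondary subtlety is to check, once $P(\la)$ is in hand, that the only real zero of its numerator lying in $\bigl(-4,-(1+\sqrt{17})/2\bigr)$ is $1-\sqrt{13}$, so that no extraneous algebraic conjugate can serve as an accumulation point in place of $1-\sqrt{13}$.
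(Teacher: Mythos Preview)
Your overall plan matches the paper's proof closely: derive the two-term recurrence $a_i=k\,a_{i-1}-a_{i-2}$ on the cut-vertex components (with $k=(\la^3-2\la^2-7\la+4)/4$), solve it via the roots $R,S$ of $x^2-kx+1$, impose the symmetry condition coming from \eqref{eq:pm2}, and analyse the resulting identity in the two intervals separately, with the endpoints handled by direct substitution and the existence of $\rho(\ga_n)<-(1+\sqrt{17})/2$ secured by interlacing.

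There is, however, one concrete error in your case analysis. You assert that for $\la\in\bigl((-1+\sqrt{17})/2,\,3\bigr)$ one has $k(\la)\in(-2,2)$, so that $R=\bar S$ sit on the unit circle. This is false: one computes
\[
k^2-4=\tfrac{1}{16}(\la-4)(\la-3)(\la+1)^2(\la^2+\la-4),
\]
and on this interval each factor has a determinate sign, giving $k^2-4>0$ (indeed $k<-2$ throughout, with $k=-2$ exactly at the two endpoints). Thus $R,S$ are \emph{real} on this interval, and your complex-conjugate argument does not apply. The paper instead uses that $|R/S|>1$ together with $\bigl|(1\pm R^2)/(1\pm S^2)\bigr|>1$, so the right-hand side of the analogue of \eqref{eq:2eq} has modulus exceeding $1$, while a direct check shows $\bigl|(P-Q)/(P+Q)\bigr|<1$ on this interval --- a contradiction of the same flavour you intended, but obtained through real rather than unit-circle estimates. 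Once this is corrected, your argument is essentially the paper's.
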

 \begin{proof}
 Let  $\lambda \notin\Lambda$ (given in Theorem~\ref{thm:simple}) be an eigenvalue of  $\ga_n$.
 So, by the proof of Theorem~\ref{thm:simple}, $\lambda$  has an eigenvector $\x$ that is constant on each cell and by Remark~\ref{remark3}, it satisfies \eqref{eq:pm2}.
 We can take $x_1=1$ (note that $x_1\neq 0$ since otherwise by the eigen-equation $\x=\bf0$).
 The proof goes along the same line as the proof of Theorem~\ref{thm:GapCubic}. We first obtain a similar relation as in \eqref{eq:PQRS} for the main components of $\x$. Then we put together such a relation with \eqref{eq:pm2} and show these impose the eigenvalues to satisfy the statement.	 
 
 Consider two consecutive middle blocks of $\ga_n$, as depicted in Figure~\ref{fig:3middlequartic},  in which the  labels of the vertices indicate the components of $\x$.
  \begin{figure}[h!]
	\centering
	\begin{tikzpicture}[scale=.9]
	\vertex[fill] (1) at (0,0) [label=above:\scriptsize{$a_{i-2}\ \ $}] {};
	\vertex[fill] (2) at (.6,.5) [label=above:\scriptsize{$b_{i-2}$}] {};
	\vertex[fill] (3) at (.6,-.5) [] {};
	\vertex[fill] (4) at (1.7,.5) [label=above:\scriptsize{$c_{i-2}$}] {};
	\vertex[fill] (5) at (1.7,-.5) [] {};
	\vertex[fill] (6) at (2.3,0) [label=above:\scriptsize{$a_{i-1}$}] {};
	\vertex[fill] (7) at (2.9,.5) [label=above:\scriptsize{$b_{i-1}$}] {};
	\vertex[fill] (8) at (2.9,-.5) [] {};
	\vertex[fill] (9) at (4,.5) [label=above:\scriptsize{$c_{i-1}$}] {};
	\vertex[fill] (10) at (4,-.5) [] {};
	\vertex[fill] (11) at (4.6,0) [label=above:\scriptsize{$a_{i}$}] {};
	\path
(3) edge (2)
(4) edge (5)
(5) edge (2)
(3) edge (4)
	(1) edge (2)
	(1) edge (3)
	(3) edge (5)
	(2) edge (4)
	(4) edge (6)
	(5) edge (6)
	(6) edge (7)
	(6) edge (8)
(7) edge (8)
(7) edge (10)
(8) edge (9)
(9) edge (10)
	(7) edge (9)
	(8) edge (10)
	(10) edge (11)
	(9) edge (11);
	\end{tikzpicture}
	\caption{Three middle blocks of  $\ga_n$ and the components of $\x$}\label{fig:3middlequartic}
\end{figure}

 Using the eigen-equation, we obtain
$$\begin{array}{rrr}
(\lambda-1) b_{i-2}- a_{i-2}-2c_{i-2}=0,& (\lambda-1) c_{i-2}-2b_{i-2}-a_{i-1}=0,& \lambda a_{i-1}-2c_{i-2}-2b_{i-1}=0, \\
(\lambda-1) b_{i-1}-a_{i-1}-2c_{i-1}=0,& (\lambda-1)  c_{i-1}-2b_{i-1}-a_i=0.&
\end{array}$$
 From these equations we can  write $a_{i-1}$ in terms of $a_{i-2}$ and  $a_{i}$ as:
\begin{equation}\label{eq:ai-1quartic}
a_{i-1}=\frac{4(a_{i-2}+a_{i})}{\lambda^3-2\lambda^2-7\lambda+4}.
\end{equation}
Let $a_0, \ldots, a_m$ be the components of $\x$ on the cut vertices of $\ga$, as shown in Figure~\ref{fig:aquartic}.
\begin{figure}[H]
\centering
\begin{tikzpicture}[scale=.9]
	\vertex[fill] (1) at (0,0) [label=left:\footnotesize{$1$}]{};
	\vertex[fill] (2) at (.5,.5) [label=above:\footnotesize{$1$}] {};
	\vertex[fill] (3) at (.5,-.5) [] {};
	\vertex[fill] (4) at (1.5,.5) [] {};
    \vertex[fill] (5) at (1.5,-.5) [] {};
    \vertex[fill] (6) at (2,0) [label=above:\footnotesize{$a_0$}] {};
    \vertex[fill] (7) at (2.5,.5) [] {};
    \vertex[fill] (8) at (2.5,-.5) [] {};
    \vertex[fill] (9) at (3.5,.5) [] {};
    \vertex[fill] (10) at (3.5,-.5) [] {};
    \vertex[fill] (11) at (4,0) [label=above:\footnotesize{$a_1$}] {};
    \vertex[fill] (22) at (5.9,0) [] {};
    \vertex[fill] (23) at (6.4,.5) [] {};
    \vertex[fill] (24) at (6.4,-.5) [] {};
    \vertex[fill] (25) at (7.4,.5) [] {};
    \vertex[fill] (26) at (7.4,-.5) [] {};
    \vertex[fill] (27) at (8,0) [label=above:\footnotesize{$a_m$}] {};
    \vertex[fill] (28) at (8.6,.5) [] {};
    \vertex[fill] (29) at (8.6,-.5) [] {};
    \vertex[fill] (30) at (9.6,.5) [] {};
    \vertex[fill] (31) at (9.6,-.5) [] {};
    \vertex[fill] (32) at (10.1,0) [] {};
    \tikzstyle{vertex}=[circle, draw, inner sep=0pt, minimum size=0pt]
    \vertex[fill] (12) at (4.2,.2)[] {};
    \vertex[fill] (13) at (4.2,-.2)[] {};
     \vertex[fill] (20) at (5.7,.2)[] {};
    \vertex[fill] (21) at (5.7,-.2)[] {};
    \tikzstyle{vertex}=[circle, draw, inner sep=0pt, minimum size=1pt]
    \vertex[fill] (15) at (4.75,0)[]{} ;
    \vertex[fill] (17) at (4.95,0)[]{} ;
    \vertex[fill] (19) at (5.15,0)[]{} ;
	\path
		(1) edge (2)
		(1) edge (3)
	    (1) edge (4)
	    (2) edge (3)
		(2) edge (4)
	    (3) edge (4)
		(1) edge (5)
	    (2) edge (5)	
	    (3) edge (5)
	    (4) edge (6)
	    (5) edge (6)
	    (6) edge (7)
	    (6) edge (8)
	    (7) edge (8)
	    (7) edge (9)
	    (7) edge (10)
	    (8) edge (9)
	    (8) edge (10)
	    (9) edge (10)
	    (9) edge (11)
	   (10) edge (11)
	   (11) edge (12)
	   (11) edge (13)
	   (20) edge (22)
	    (21) edge (22)
	   (22) edge (23)
	   (22) edge (24)
	   (23) edge (24)
	   (23) edge (25)
	   (23) edge (26)
	   (24) edge (25)
	   (24) edge (26)
	   (25) edge (26)
	    (25) edge (27)
	    (26) edge (27)
	   (27) edge (28)
	   (27) edge (29)
	   (28) edge (30)
	    (28) edge (31)
	    (28) edge (32)
	   (29) edge (30)
	   (29) edge (31)
	   (29) edge (32)
	    (30) edge (31)
	    (30) edge (32)
	   (31) edge (32) ;
\end{tikzpicture}
\caption{The quartic graph $\ga_n$ with $m$ middle blocks}
\label{fig:aquartic}
\end{figure}
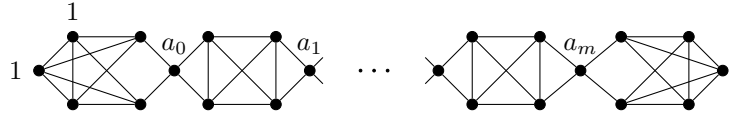 
From \eqref{eq:ai-1quartic}, we obtain the following recurrence relation on the components of $\x$:
\begin{equation}\label{eq:Recursionquartic}
 \begin{array}{l}
 	a_i=k a_{i-1}-a_{i-2},\quad i=2,\ldots,m,\\  a_0=\frac{1}{2}(\lambda^2-2\lambda-6),\quad a_1=\frac{1}{8}(\lambda^5-4\lambda^4-9\lambda^3+32\lambda^2+24\lambda-24),
 	\end{array}
\end{equation}
 where  $k=(\lambda^3-2\lambda^2-7\lambda+4)/4$.
 Note that $a_0$ and $a_1$ are obtained using the eigen-equation on the first five cells of $\ga_n$.
To solve the recurrence relation \eqref{eq:Recursionquartic}, we find the zeros of its characteristic equation
 $x^2-kx+1=0$, that is
  $$R=(k-\sqrt{k^2-4})/2,\quad S=(k+\sqrt{k^2-4})/2.$$
 Now we assume that  $\lambda\in I:=\left(-4,-(1+\sqrt{17})/2\right)\cup \left((-1+\sqrt{17})/2,3\right)$.
So  $k^2-4 >0$ and so $R,S$ are reals.
  It turns out that the solution of \eqref{eq:Recursionquartic} is
\begin{equation}\label{eq:PQRSquartic}
  a_j=\frac{1}{4}\left( (P+Q)R^j-(P-Q)S^j\right),
\end{equation}
in which
\begin{equation}\label{eq:P,Q}
	P=\frac{-\lambda^5+4\lambda^4+9\lambda^3-34\lambda^2-14\lambda+24}{4\sqrt {k^2-4}},
\quad
Q=\lambda^2-2\lambda-6.\end{equation}
Since $\x$  satisfies \eqref{eq:pm2}, $|a_{\frac{m}{2}-1}|=|a_{{\frac{m}{2}}+1}|$ if $m$ is even and
$|a_{\frac{m-1}{2}}|=|a_{\frac{m+1}{2}}|$ if $m$ is odd.
We suppose that $m=2l$ is even (if $m$ is odd, then  the argument is the same).
So from \eqref{eq:PQRSquartic}, we have
$$(P+Q)R^{l-1}-(P-Q)S^{l-1} =\pm\left( (P+Q)R^{l+1}-(P-Q)S^{l+1}\right).$$
Therefore, we have either
$$(P+Q)R^{l-1}(1- R^2)=(P-Q)S^{l-1}(1-S^2),\quad
\text{or}\quad
(P+Q)R^{l-1}(1+R^2)=(P-Q)S^{l-1}(1+S^2).$$
Note that $R$, $S$, $1\pm R^2$, and $1\pm S^2$ do not vanish since $\lambda \in I$.

We further assume that $\lambda \in \left((-1+\sqrt{17})/2,3\right)$. So $P+Q\neq 0$ and we have either
\begin{equation}\label{eq:2eqquartic}
\frac{P-Q}{P+Q}=\left(\frac{R}{S}\right)^{l-1} \frac{1-R^2}{1-S^2},\quad
  \text{or}\quad
 \frac{P-Q}{P+Q}=\left(\frac{R}{S}\right)^{l-1} \frac{1+R^2}{1+S^2}.
\end{equation}
Also
\begin{equation}\label{eq:R/S>1}
\frac{R}{S}>1,\quad   \frac{1-R^2}{1-S^2}<-1, ~\text{and}\quad  \frac{1+R^2}{1+S^2}>1.
\end{equation}
So by \eqref{eq:2eqquartic}, we have that  $\left|\frac{P-Q}{P+Q}\right|>1$.
However, from \eqref{eq:P,Q} it is seen that $-1<\frac{P-Q}{P+Q}<1$, a contradiction.
 Therefore, $\ga_n$ has no eigenvalue in the interval $\left((-1+\sqrt{17})/2,3\right)$.

 Next, assume that $\lambda \in \left(-4, (-1-\sqrt{17})/2\right)$. So $P-Q\neq 0$ and we have
\begin{equation}\label{eq:2eqqquartic}
\frac{P+Q}{P-Q}=\left(\frac{S}{R}\right)^{l-1} \frac{1-S^2}{1-R^2},\quad
  \text{or}\quad
 \frac{P+Q}{P-Q}=\left(\frac{S}{R}\right)^{l-1} \frac{1+S^2}{1+R^2}.
\end{equation}
Also
\begin{equation*}
0<\frac{S}{R}<1,~ -1<\frac{1-S^2}{1-R^2}<0, ~{\text{and}}~  0<\frac{1+S^2}{1+R^2}<1.
\end{equation*}
Thus
 $\left(\frac{S}{R}\right)^{l-1}\frac{1 \pm S^2}{1\pm R^2}\to 0$, as $l$ tends to infinity.
 From \eqref{eq:2eqqquartic}, it then follows that
 $P+Q\to0$, where
 $$P+Q=\la^2-2\la-6-\frac {\la^5-4\la^4-9\la^3+34\la^2+14\la-24}{4\sqrt {k^2-4 }}.$$
Therefore,
 $( \lambda-2)( \lambda-3 ) ( \lambda-4)
 ( {\lambda}^{2}-2\,\lambda-12 ) ( \lambda+1) ^{2}
\to 0$.
As $-4<\la<-(1+\sqrt{17})/2$, we must have
 $\lambda \to {1-\sqrt{13}}$ as $n$ tends to infinity.

 To establish that $\lim_{n\to \infty}\rho(\ga_n)=1-\sqrt{13}$, it suffices to show that $\rho(\ga_n)<-(1+\sqrt{17})/2$. This is already done in the proof of Theorem~\ref{thm:Gapquartic1}, where we showed that $\rho(\ga_n)<-2.601<-(1+\sqrt{17})/2$.

It only remains to show that $(-1\pm\sqrt{17})/2$ and $3$ are not eigenvalues of $\ga_n$. For a contradiction,
 let $(-1+\sqrt{17})/2$ be an eigenvalue of $\ga_n$ with eigenvector $\x$.
  The components of $\x$ satisfy the   recurrence relation  \eqref{eq:Recursionquartic} with
     $k=-2$, $a_0=(-3\sqrt{17}-1)/4$, and $a_1=(3\sqrt{17}+9)/4$. It turns out that
 $$a_j=\frac{(-1)^{j+1}}{4}\left(3\sqrt{17}+1+8j\right).$$
 As before, we  may assume that $m=2l$ is even and $|a_{l-1}|=|a_{l+1}|$.
If $a_{l-1}=a_{l+1}$, then $4(-1)^{l+1}=0$ and if $a_{l-1}=-a_{l+1}$, then 
 $ (-1)^{l}\left(3\sqrt{17}+1+8l\right)=0$. 
Both of which lead to contradictions, and so $(-1+\sqrt{17})/2$ is not an eigenvalue of $\ga_n$. Similar arguments work for $(-1-\sqrt{17})/2$ and $3$.  So  $\ga_n$ has no eigenvalue
in the interval $[(-1+\sqrt{17})/2,3]$.
 The proof is now complete.
 \end{proof}
 We conjecture that the gap interval above is indeed maximal.

\begin{conjecture}\rm
$[\frac{-1+\sqrt{17}}{2},3]$ is a maximal gap interval for the sequence $\ga_n$.
\end{conjecture}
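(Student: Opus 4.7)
The conjecture states that $[(-1+\sqrt{17})/2,3]$ is a maximal gap interval. By the preceding theorem, we already know it is a gap interval; the remaining content is maximality, meaning that for every $\epsilon>0$ there must exist some $n$ for which $\ga_n$ has an eigenvalue in $((-1+\sqrt{17})/2-\epsilon,\,(-1+\sqrt{17})/2)$ and another in $(3,\,3+\epsilon)$. Since $(-1+\sqrt{17})/2$ and $3$ are precisely the band edges of $A(\bga)$ adjacent to the central spectral gap (see Figure~\ref{fig:plotQuartic}), the plan is to show that the eigenvalues of $\ga_n$ fill up the flanking bands $[-1,(-1+\sqrt{17})/2]$ and $[3,4]$ densely as $n\to\infty$, a manifestation of the general principle that finite chunks of a $\mathbb{Z}$-periodic graph recover the bulk spectrum of its adjacency operator.

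Concretely, I would work with the recursion \eqref{eq:Recursionquartic} that governs the cut-vertex components $a_j$ of a first-type eigenvector. For $\lambda$ in the interior of a band, the coefficient $k(\lambda)=(\lambda^3-2\lambda^2-7\lambda+4)/4$ lies in $(-2,2)$, the characteristic roots of the recursion are the conjugate pair $e^{\pm i\phi(\lambda)}$ with $\phi(\lambda)\in(0,\pi)$ defined by $2\cos\phi(\lambda)=k(\lambda)$, and the general solution is the oscillatory $a_j=C\cos(j\phi(\lambda)+\psi)$. The constants $C$ and $\psi$ are determined (up to sign) by the two initial values $a_0,a_1$ coming from the left $D_1$ end block, and the symmetry condition $|a_j|=|a_{m-j}|$ imposed by the right $D_1$ end block together with Remark~\ref{remark3} then reduces the eigenvalue equation to a transcendental relation whose leading term is $m\phi(\lambda)\equiv \Psi(\lambda)\pmod{\pi}$ for an explicit continuous function $\Psi$. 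As $\lambda$ sweeps a band, $\phi(\lambda)$ continuously traverses $(0,\pi)$, so $m\phi(\lambda)$ sweeps an interval of length $m\pi$, producing roughly $m$ solutions per band, one in each subinterval of size $\pi/m$. For any $\epsilon>0$ one can then take $m$ (equivalently $n$) sufficiently large to produce a solution of the eigenvalue equation within distance $\epsilon$ of any prescribed point of $[-1,(-1+\sqrt{17})/2]$ or of $[3,4]$; applied at the band edges $(-1+\sqrt{17})/2$ (from below) and $3$ (from above), this yields the required eigenvalues and establishes maximality.

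The hard part will be the delicate \emph{band-edge} analysis. Both endpoints of the gap correspond to the degenerate value $\phi=\pi$ (where $k=-2$ and the two characteristic roots coalesce to $-1$), at which the oscillatory solution is replaced by $(-1)^j(\alpha+\beta j)$ and the leading-order transcendental relation above is no longer accurate. Near such an edge the spacing between consecutive eigenvalues is only $O(1/m^2)$ instead of $O(1/m)$, and a priori boundary effects could repel eigenvalues from the edge, leaving a small extra gap on the finite side. To rule this out one would need either a careful next-order asymptotic expansion of $\Psi(\lambda)$ and $\phi(\lambda)$ near $\lambda=(-1+\sqrt{17})/2$ and $\lambda=3$, locating the first few eigenvalues on each side explicitly, or a conceptual argument along the lines of the bulk--boundary correspondence alluded to at the end of Section~\ref{sec:Floque-Bloch}: computing the topological invariant of the relevant band of the eigenvector bundle of $A(\bga)$ over the Brillouin zone and checking that it vanishes at both edges of the gap would guarantee the absence of boundary-localised modes extending the gap beyond $[(-1+\sqrt{17})/2,3]$.
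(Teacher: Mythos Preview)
The statement you are trying to prove is presented in the paper as an open \emph{conjecture}, not a theorem; the authors explicitly say ``We conjecture that the gap interval above is indeed maximal'' and offer no proof. There is therefore no argument in the paper to compare your proposal against.

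As for the proposal itself: your outline is the natural strategy and is largely sound in the interior of the bands. For $\lambda$ strictly inside $[-1,(-1+\sqrt{17})/2]$ or $[3,4]$ one indeed has $k(\lambda)\in(-2,2)$, the recursion has oscillatory solutions, and a counting argument of the kind you sketch (intermediate value theorem applied to the phase function) will produce roughly $m$ first-type eigenvalues in each band, equidistributed with spacing $O(1/m)$. This already shows that any point strictly inside either band is a limit point of eigenvalues, which is most of what is needed.

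You correctly isolate the genuine difficulty: both relevant band edges $\lambda=(-1+\sqrt{17})/2$ and $\lambda=3$ correspond to the degenerate value $k=-2$, and the naive phase argument breaks down there. However, your suggestion to invoke a bulk--boundary correspondence and compute a Chern number is not quite on target in this one-dimensional setting (the Brillouin zone is a circle, and the relevant invariant would be a winding number rather than a Chern number; moreover, that machinery addresses whether boundary modes appear \emph{inside} the gap, not whether bulk modes accumulate \emph{at} the band edge from the band side). The more direct route is the one you mention first: carry out an explicit next-order expansion of the eigenvalue condition near $k=-2$. Writing $k=-2+\delta$ with $\delta\to0^+$, one has $\phi=\pi-\sqrt{\delta}+O(\delta)$, and the transcendental condition becomes (up to a bounded phase shift from the end blocks) $m\sqrt{\delta}\equiv\text{const}\pmod\pi$, which visibly has solutions with $\delta=O(1/m^2)$, hence eigenvalues at distance $O(1/m^2)$ from each band edge. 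Making this rigorous---in particular checking that the end-block phase shift stays bounded as $\delta\to0$, which amounts to verifying that the initial data $a_0(\lambda),a_1(\lambda)$ from the $D_1$ block do not conspire to kill the leading term---is a concrete calculation, not a conceptual obstacle, and is what stands between your sketch and a proof of the conjecture.
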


\section*{Acknowledgements}
The authors thank Peter Sarnak for insightful comments.
They are also indebted to an anonymous referee whose detailed and helpful comments greatly improved the exposition of the paper.
The second author carried out this work during a Humboldt Research Fellowship at the University of Hamburg. He thanks the Alexander von Humboldt-Stiftung for financial support.


\begin{thebibliography}{9}
	
	
\bibitem{AbGh} M. Abdi and E. Ghorbani, Quartic graphs with minimum spectral gap,  {\em J. Graph Theory}, to appear.
\bibitem{AbGhIm} M. Abdi, E. Ghorbani, and W. Imrich, Regular graphs with minimum spectral gap, {\em European J. Combin.} {\bf95} (2021), 103328, 18 pp.
 \bibitem{alps} C.O. Aguilar, J. Lee, E. Piato, and B.J. Schweitzer, Spectral characterizations of anti-regular graphs, {\em Linear Algebra Appl.} {\bf557} (2018), 84--104.
 \bibitem{Alazemi} A. Alazemi, M. Andeli\'{c}, T. Koledin, and  Z. Stani\'{c}, Eigenvalue-free intervals  of  distance matrices of threshold and chain graphs, {\em Linear  Multilinear Algebra}, to appear.
\bibitem{aldous2002reversible} D. Aldous and J. Fill,  {\em Reversible Markov Chains and Random Walks on Graphs}, University of California, Berkeley,  2002, available at \url{http://www.stat.berkeley.edu/~aldous/RWG/book.html}.
\bibitem{Andelic} M. Andeli\'c, Z. Du, C.M. da Fonseca, and S.K. Simi\'{c}, Tridiagonal matrices and spectral properties of some graph classes, {\em Czechoslovak Math. J.} {\bf70(145)} (2020), 1125--1138.
\bibitem{aopBook} J.K. Asb\'oth, L. Oroszl\'any, A. P\'alyi, {\em A Short Course on Topological Insulators,
Band Structure and Edge states in One and Two Dimensions}, Lecture Notes in Physics, 919. Springer, Cham, 2016.
\bibitem{band} R. Band, G. Berkolaiko, C. H. Joyner, and W. Liu, Quotients of finite-dimensional operators by symmetry representations, {\tt arXiv:1711.00918}.
\bibitem{Berkolaiko} G. Berkolaiko and P. Kuchment, {\em Introduction to Quantum Graphs},
American Mathematical Society, Providence, RI, 2013.
\bibitem{Imrich} C. Brand, B. Guiduli, and W. Imrich, The characterization of cubic graphs with minimal eigenvalue
gap, {\em Croatica Chemica Acta} {\bf80} (2007), 193--201.
\bibitem{Haemers} A.E. Brouwer and W.H. Haemers, {\em Spectra of Graphs}, Springer,  New York, 2012.
\bibitem{Exner} P. Exner, P. Kuchment, and B. Winn, On the location of spectral edges in $\mathbb{Z}$-periodic media,  {\em J. Phys. A} {\bf 43} (2010), 474022, 8 pp.
\bibitem{ghCo} E. Ghorbani, Spectral properties of cographs and $P_5$-free graphs, {\em Linear Multilinear Algebra} {\bf67} (2019), 1701--1710.
\bibitem{ghTh} E. Ghorbani, Eigenvalue-free interval for threshold graphs, {\em Linear Algebra Appl.} {\bf583} (2019), 300--305.
\bibitem{Guiduli} B. Guiduli,  The structure of trivalent graphs with minimal eigenvalue gap, {\em J. Algebraic Combin.} {\bf6} (1997), 321--329.
\bibitem{Harrison}J.M. Harrison, P. Kuchment, A. Sobolev, and B. Winn, On occurrence of spectral edges for periodic
operators inside the Brillouin zone, {\em J. Phys. A} {\bf 40} (2007), 7597--7618.
\bibitem{jtt0} D.P. Jacobs, V. Trevisan, and F. Tura, Eigenvalue location in threshold graphs, {\em Linear Algebra Appl.} {\bf439} (2013), 2762--2773.
\bibitem{Sarnak} A.J. Koll\'{a}r and P. Sarnak, Gap sets for the spectra of cubic graphs, {\em Comm. Amer. Math. Soc.} {\bf 1} (2021), 1--38.
\bibitem{mutlu} G. Mutlu, On the quotient quantum graph with respect to the regular representation,
{\em Commun. Pure Appl. Anal.} {\bf20} (2021) 885--902.
\end{thebibliography}
\end{document}